\documentclass[12pt,letterpaper,bibliography=totoc]{scrartcl}

\usepackage{amsmath}
\usepackage{amsthm}
\usepackage{amssymb}
\usepackage{mathtools}
\usepackage[mathscr]{euscript}
\usepackage{xcolor}
\definecolor{Myblue}{rgb}{0,0,0.6}
\usepackage[
 colorlinks,
 citecolor=Myblue,
 linkcolor=Myblue,
 urlcolor=Myblue,
 pdfpagemode=UseNone
]{hyperref}
\hypersetup{
 pdftitle={Asymptotics of the Moduli Spaces of Parabolic SL(3,C)-Higgs Bundles},
 pdfauthor={Shiyu Cao}
}
\usepackage[totalwidth=420pt,totalheight=590pt]{geometry}
\usepackage[inline,shortlabels]{enumitem}
\DeclareTOCStyleEntry[pagenumberwidth=2.5em]{tocline}{section}

\DeclareOldFontCommand{\rm}{\normalfont\rmfamily}{\mathrm}

\allowdisplaybreaks

\newcommand{\C}{\mathbb C}
\newcommand{\R}{\mathbb R}
\newcommand{\Z}{\mathbb Z}
\newcommand{\Pp}{\mathbb P}
\newcommand{\calM}{\mathcal M}
\newcommand{\calL}{\mathcal L}
\newcommand{\calV}{\mathcal V}
\newcommand{\Id}{\operatorname{Id}}
\newcommand{\SL}{\operatorname{SL}}

\newcommand{\ParEnd}{\operatorname{ParEnd}}
\newcommand{\SParEnd}{\operatorname{SParEnd}}
\newcommand{\Pic}{\operatorname{Pic}}
\newcommand{\Nm}{\operatorname{Nm}}
\newcommand{\tr}{\operatorname{tr}}
\newcommand{\diag}{\operatorname{diag}}
\newcommand{\pardeg}{\operatorname{pardeg}}
\newcommand{\rank}{\operatorname{rank}}
\newcommand{\Area}{\operatorname{Area}}
\newcommand{\inj}{\operatorname{inj}}
\newcommand{\Vol}{\operatorname{Vol}}
\newcommand{\Rm}{\operatorname{Rm}}
\newcommand{\ad}{\operatorname{ad}}
\newcommand{\dd}{\,d}
\newcommand{\ii}{\mathrm i}

\theoremstyle{plain}
\newtheorem{thm}{Theorem}
\newtheorem{prop}[thm]{Proposition}
\newtheorem{lem}[thm]{Lemma}
\theoremstyle{definition}
\newtheorem{defn}[thm]{Definition}

\numberwithin{equation}{section}
\numberwithin{thm}{section}

\title{Asymptotics of the Moduli Spaces of Parabolic $\mathrm{SL}(3,\mathbb C)$-Higgs Bundles}
\author{Shiyu Cao\\[-1mm]
  \normalsize Chern Institute of Mathematics, Nankai University\\
  \normalsize\href{mailto:shiyucao@126.com}{shiyucao@126.com}}
\date{October 2026}

\begin{document}
\setlength{\emergencystretch}{1.5em}

\maketitle

\begin{abstract}
We study the large-scale geometry of the moduli spaces of stable
strongly parabolic $\mathrm{SL}(3,\C)$-Higgs bundles on the
three-punctured sphere for a family of parabolic weights.
We glue the local Toda model metric to a decoupled metric obtained from the
spectral line bundle, and perturb the resulting approximate metrics
to harmonic metrics adapted to the parabolic structure.  We also prove
that Hitchin's $L^2$ metric approaches the semi-flat metric
exponentially.  The metric comparison and the spectral period calculation show that each
moduli space is a complete ALG gravitational instanton of type $E_6$.
The asymptotic estimates are uniform on compact subintervals of the
weight chamber.
\end{abstract}

\thispagestyle{empty}

\setcounter{tocdepth}{2}
\tableofcontents

\section{Introduction}\label{sec:introduction}

A Higgs bundle on a compact Riemann surface consists of a holomorphic
vector bundle and a holomorphic endomorphism-valued one-form.  A
Hermitian metric solving Hitchin's equations identifies its stable
moduli space with a gauge-theoretic moduli space carrying a natural
hyperk\"ahler metric \cite{Hit87}.  On a punctured curve, parabolic
flags and weights prescribe the growth of the metric, and the
correspondence extends to stable parabolic Higgs bundles
\cite{Simpson1990,Konno1993,Mochizuki}.

The characteristic polynomial defines the Hitchin fibration.  Over
its regular locus, the spectral correspondence identifies the fibers
with abelian varieties \cite{BNR89,Markman94}.  The special K\"ahler
metric on the base and the flat metrics on these fibers define the
semi-flat metric; its horizontal and vertical parts are spectral
Hodge pairings \cite[Proposition~2.14 and Theorem~2.15]
{Fredrickson-1810.01554}.  The asymptotic picture of
Gaiotto--Moore--Neitzke predicts exponentially small corrections to
this metric in the large-Higgs-field limit \cite{GMN13}.

Mazzeo--Swoboda--Weiss--Witt constructed rank-two limiting
configurations and their desingularizations \cite{MSWW16}, and
identified the semi-flat leading term in the Hitchin metric
\cite{MSWW19}.  Exponential comparison was proved on the Hitchin
section by Dumas--Neitzke \cite{DN19}, and by Fredrickson on the
higher-rank locus where all ramification indices of the spectral
cover are at most two
\cite[Theorem~4.3]{Fredrickson-1810.01554}.  Mochizuki established
the comparison on the smooth spectral locus in arbitrary rank,
including higher ramification \cite{Mochizuki23}.
For punctured curves, Fredrickson--Mazzeo--Swoboda--Weiss treated
parabolic $\SL(2,\C)$-Higgs bundles \cite{FMSW22}, and Chen--Li
studied rank-two irregular Higgs bundles \cite{ChenLi}.  Their
four-dimensional examples relate these estimates to ALG geometry.

A hyperk\"ahler gravitational instanton is a complete noncompact
hyperk\"ahler four-manifold with square-integrable curvature.
An ALG end is asymptotic, with all derivatives, to a flat torus
fibration with finite monodromy over the complement of a disk in a
two-dimensional Euclidean cone.
It has quadratic volume growth, and its flat model is specified by
the cone angle, elliptic modulus, and fiber scale.  Chen--Chen proved
that, under the curvature bound $|\Rm|=O(r^{-2-\epsilon})$ for some
$\epsilon>0$, hyperk\"ahler gravitational instantons have ALE, ALF,
ALG, or ALH ends \cite{CC21a}, where $r$ is the distance from a
fixed point.

Chen--Chen also showed that every nonflat ALG gravitational instanton
admits, for a suitable compatible complex structure, a compactification
to a rational elliptic surface by adding a singular fiber with finite
monodromy \cite{CC21a,CC21b}.  The Kodaira type of this fiber determines
the asymptotic cone angle.  These results connect the metric
asymptotics of four-dimensional Hitchin systems with their elliptic
compactifications.  Li--Sun \cite[Sections~4.2.3 and~4.5.1]{LiSun26}
survey the asymptotic models and compactification theory of gravitational
instantons.

We study strongly parabolic rank-three Higgs bundles on
$$
 X=\Pp^1,\qquad D=\{0,1,\infty\}.
$$
The underlying bundle has degree $-3$ and full flags, with weights
$(0,1/3,2/3+\delta)$ at $0$ and $(0,1/3,2/3)$ at $1,\infty$.
We use the degree-zero determinant-one normalization in
Subsection~\ref{subsec:moduli}, and denote the resulting moduli
space by $\calM_\delta$.  Proposition~\ref{prop:chamber} shows that
$0<\delta<1/3$ is one stability chamber.  All uniform estimates
below hold on an arbitrary compact subinterval $I\Subset(0,1/3)$.

Throughout, the Hitchin fibration is denoted by $h:\calM_\delta\to\C_t$.
For $t\ne0$, the corresponding spectral curve is a smooth elliptic triple cover
of $X$, totally ramified over $D$.  Let $Y$ be the spectral curve for
$t=1$.  After the base change $t=\lambda^3$,
the regular locus is parametrized by $(\lambda,q)\in\C^\times\times\Pic^0(Y)$,
with quotient action
$$
 (\lambda,q)\longmapsto(\omega\lambda,\omega^{-1}q),
 \qquad \omega=e^{2\pi\ii/3}.
$$
The action on $\Pic^0(Y)$ is pullback by the deck generator, expressed
as multiplication by $\omega^{-1}$ through the Abel--Jacobi isomorphism.
We put $T=|\lambda|=|t|^{1/3}$.  The spectral description is
compatible with the orbifold construction of Groechenig
\cite{Gro12}; the rank-three $\widetilde E_6$ family was also
studied by Eper--Szab\'o \cite{ES26}, in which they proved the geometric $P=W$ conjecture.

Our first result constructs the harmonic metric uniformly on compact
subintervals of the chamber.  The metric on the spectral line bundle gives a decoupled metric
$h_q^{\rm dec}$.  We interpolate it with the local cyclic Toda
metrics to obtain $h_{\lambda,q,\delta}^{\rm app}$, as in
Definition~\ref{defn:approximate-metric}.

\begin{thm}\label{thm:intro-construction}
For every $I\Subset(0,1/3)$ there are $C_I,c_I,T_I>0$ such that
the normalized adapted harmonic metric satisfies
\begin{equation}\label{eq:intro-harmonic-approximation}
 \begin{aligned}
 h_{\lambda,q,\delta}
 &=h_{\lambda,q,\delta}^{\rm app}e^{2\gamma_{\lambda,q,\delta}},\\
 \|\gamma_{\lambda,q,\delta}\|_{L^\infty,h^{\rm app}}
 &\leq C_Ie^{-c_IT},\qquad T\geq T_I,
 \end{aligned}
\end{equation}
uniformly for $(q,\delta,\arg\lambda)\in \Pic^0(Y)\times I\times S^1$.
In local trivializations of the underlying smooth family, the metric
depends smoothly on $(\lambda,q)$ for fixed $\delta$ and continuously on
$\delta$ in $C^\infty_{\mathrm{loc}}(X\setminus D)$.  It is compatible
with the quotient action.
\end{thm}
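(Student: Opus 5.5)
The strategy is the standard gluing-and-perturbation scheme of Mazzeo--Swoboda--Weiss--Witt and Fredrickson, adapted to parabolic weights. We write the unknown adapted harmonic metric as $h=h^{\rm app}e^{2\gamma}$ with $\gamma$ Hermitian with respect to $h^{\rm app}$ and traceless. We then solve the Hitchin equation $F_h^\perp+[\varphi,\varphi^{*_h}]=0$ as a nonlinear equation $\mathcal F(\gamma)=0$, with the linearization
$$
L_{\lambda,q,\delta}=\Delta_{A^{\rm app}}-\ii\,\ast M_\varphi,
\qquad M_\varphi\gamma=[\varphi^{*},[\varphi,\gamma]]+[\varphi,[\varphi^{*},\gamma]],
$$
acting on weighted Hölder or Sobolev spaces adapted to the parabolic structure at $D$. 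The proof has three steps: an error estimate for the approximate metric, a uniform invertibility bound for $L$, and a contraction-mapping argument. Each step has to be checked for uniformity in $(q,\delta,\arg\lambda)\in\Pic^0(Y)\times I\times S^1$.

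\textbf{Error estimate.} Away from $D$, $h^{\rm dec}_q$ solves the decoupled equations $F^\perp=0$ and $[\varphi,\varphi^*]=0$ exactly. Near each puncture the local cyclic Toda metric solves the full Hitchin equation exactly. The error is therefore supported in the annular gluing regions. There the Toda solution converges to the decoupled model at rate $e^{-cT}$, by the known exponential decay of the Toda/Painlevé-type solutions for large $|z|T$. Hence $\|\mathcal F(0)\|\le Ce^{-cT}$ in the weighted norm. Uniformity in $\delta\in I$ comes from the compactness of $I$: the Toda parameters depend continuously on the weight $2/3+\delta$ and stay away from the chamber walls $\delta=0,1/3$, where the local model degenerates. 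Uniformity in $q$ comes from compactness of $\Pic^0(Y)$.

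\textbf{Linear estimate (main obstacle).} We need $\|L^{-1}\|\le C$, or at worst polynomial growth in $T$, on the traceless Hermitian sections with the correct parabolic weighted decay. First, $L$ is self-adjoint and nonnegative, and $\langle L\gamma,\gamma\rangle=\|d_A\gamma\|^2+\|[\varphi,\gamma]\|^2$. A kernel element must therefore be parallel and commute with $\varphi$. Stability, or equivalently the irreducibility of the spectral data, together with the tracelessness and parabolic conditions excludes such elements. For the quantitative bound, decompose $\gamma$ into its component commuting with $\varphi$ (the diagonal part in the eigenframe) and its off-diagonal part. On the off-diagonal part, $M_\varphi\ge cT^2$ away from $D$, which gives a strong lower bound. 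On the diagonal part, $L$ reduces to a scalar Laplacian on the spectral curve $Y$. That Laplacian has a spectral gap on mean-zero functions, since the traceless condition kills the constants, and the gap is uniform because the geometry of $Y$ does not change with $\lambda$. Near the punctures we use the invertibility of the linearized Toda operator on the model space, where the indicial roots depend continuously on $\delta$ and avoid the weight used for $\delta\in I$. A contradiction/blow-up argument then combines these regions into a uniform bound with constant $C_I$. Checking that the indicial roots stay off the chosen weight uniformly on $I$ is the delicate point.

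\textbf{Contraction, regularity and symmetry.} The quadratic remainder satisfies $\|Q(\gamma_1)-Q(\gamma_2)\|\le C(\|\gamma_1\|+\|\gamma_2\|)\|\gamma_1-\gamma_2\|$, with $T$-dependence at most polynomial. Hence for $T\ge T_I$ the map $\gamma\mapsto -L^{-1}(\mathcal F(0)+Q(\gamma))$ is a contraction on a ball of radius $2Ce^{-cT}$. Its fixed point gives the required bound on $\|\gamma\|$, and the weighted $L^\infty$ estimate follows from the Sobolev embedding. By uniqueness of adapted harmonic metrics for stable parabolic Higgs bundles, this fixed point is the normalized harmonic metric. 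Smooth dependence on $(\lambda,q)$ follows from the implicit function theorem, since $L$ is invertible and the data depend smoothly on these parameters. Continuous dependence on $\delta$ in $C^\infty_{\rm loc}(X\setminus D)$ follows from the continuity of the fixed point in its parameters, followed by interior elliptic bootstrapping. Finally, the approximate metric is equivariant under $(\lambda,q)\mapsto(\omega\lambda,\omega^{-1}q)$ by construction, because the deck transformation acts compatibly on the spectral line bundle and on the Toda model. Uniqueness of the harmonic metric then gives compatibility of $h_{\lambda,q,\delta}$ with the quotient action.
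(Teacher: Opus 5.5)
Your architecture matches the paper's: exact solutions away from a fixed transition annulus, a coercive linearization with at most polynomial inverse bound, a contraction on a ball of radius $\sim T^Ne^{-cT}$, and uniqueness for the identification and the equivariance. The gap is in the linear step at the punctures.

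For diagonal entries the Higgs term is subordinate near each puncture, since $\rho^2|\Phi|^2=O_b(\rho^{2-6\delta})$. So the indicial operator is $-(\rho\partial_\rho)^2+B_{\delta,p}^2$, and $\sigma=0$ is a double indicial root for every $\delta$. Its indicial solutions are $c+d\log\rho$, with $c,d$ constant trace-free diagonal. No weight avoids this root, and neither side of it gives an isomorphism:
\begin{itemize}
\item On $|w|^{\eta}C_b^{2,\alpha}\to|w|^{\eta-2}C_b^{0,\alpha}$ with $\eta>0$, $\mathcal L$ is injective but has a cokernel of real dimension six, dual to the constant diagonal directions at the three punctures. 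So $\mathcal L\gamma=-R$ is in general not solvable.
\item With $\eta<0$, $\mathcal L$ is onto, but its kernel contains $\log\rho$ terms. Since $e^{2d\log\rho}=\rho^{2d}$ shifts the parabolic weights, the corrected metric need not be adapted, and your uniqueness step breaks.
\end{itemize}

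The missing idea is the hybrid domain $D_{\rm Fr}$ of Definition~\ref{defn:holder-domain}: $u=\psi_pc_p+\widetilde u_p$, with $\widetilde u_p\in|w_p|^{\widehat\eta}C_b^{2,\alpha}$ and $\widehat\eta>0$ below the first nonzero root $1-3\sup I$. The paper builds the inverse in two steps (Theorem~\ref{thm:uniform-inverse}). First it solves weakly in the Friedrichs form domain, using Lax--Milgram and Proposition~\ref{prop:uniform-coercivity}. Then explicit cylinder Green operators show $\widehat u=c_p+d_p\log\rho+v_p$, and $d_p=0$ because the logarithm has infinite Dirichlet energy. The uniformity on $I$ that matters is that the gap $(0,1-3\sup I)$ stays open, not that the roots avoid a chosen weight.

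The constants also enter the nonlinear step, because $\gamma$ does not vanish at the punctures. Terms pairing $c_p$ with $|\Phi|^2_{h^{\rm app}}\le C(T^2+T^{2-6\delta}|w|^{-6\delta})$ must land in $|w|^{\widehat\eta-2}C_b^{0,\alpha}$, which needs $2-6\sup I-\widehat\eta>0$. Likewise, the $L^\infty$ bound in the theorem is the $D_{\rm Fr}$ bound including $|c_p|$, not a Sobolev embedding of a decaying weighted space.

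A smaller point: near the punctures you propose to use invertibility of the linearized Toda operator on the model plane. That is not available in $L^2$. At large $\rho$ the centralizer directions see only the flat Laplacian, so there is no spectral gap. The paper's coercivity argument needs no model inverse. On a fixed disk, Kato's inequality $|d|u||\le|d_Au|$ and the scalar Poincar\'e inequality suffice, with the cutoff term controlled by the exterior estimate you describe. Together they force the $L^2$ mass on the disks to zero in the contradiction argument.
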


The local solutions are obtained from the cyclic Toda equation
\cite{FN17,LM2010b}.  The uniform endpoint estimates make the
residual in Hitchin's equation for $h^{\rm app}$ exponentially small
on a fixed transition annulus.  We prove a uniform coercive estimate
for the linearized equation and a polynomial inverse bound between the weighted spaces
specified in Definition~\ref{defn:holder-domain} and
Theorem~\ref{thm:uniform-inverse}, following
\cite[Section~5.2]{FMSW22}.  The contraction also bounds
$\gamma$ exponentially in a weighted $C^{2,\alpha}$ norm, including
its constant diagonal coefficients at the punctures; see
Proposition~\ref{prop:correction}.

Our second result compares the Hitchin metric with the semi-flat
metric on all tangent directions.

\begin{thm}\label{thm:intro-metric-comparison}
For every $I\Subset(0,1/3)$ there are $C_I,c_I,T_I>0$ such that
\begin{equation}\label{eq:intro-metric-decay}
 \sup_{\substack{\delta\in I\\(\phi,q)\in S^1\times\Pic^0(Y)}}
 |g_{L^2,\delta}-g_{{\rm sf},\delta}|_{g_{{\rm sf},\delta}}
 \leq C_Ie^{-c_IT},\qquad T\geq T_I,
\end{equation}
after pullback by the spectral parametrization
$\widetilde\Psi_\delta$ of Proposition~\ref{prop:spectral}.
\end{thm}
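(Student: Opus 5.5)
The plan follows the Fredrickson / Fredrickson--Mazzeo--Swoboda--Weiss scheme: write the tangent vectors of $\calM_\delta$ as infinitesimal deformations in Coulomb gauge with respect to $h_{\lambda,q,\delta}$, and compare them with the corresponding deformations for the decoupled (semi-flat) data.

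\textbf{Step 1: the semi-flat metric as a Hodge pairing.} Take a tangent vector $\dot X$ at $\widetilde\Psi_\delta(\lambda,q)$. It is represented by a first-order variation $(\dot A,\dot\Phi)$ of the pair (unitary connection, Higgs field) attached to $h=h_{\lambda,q,\delta}$. I split such vectors into horizontal ones ($\partial_\lambda$) and vertical ones (tangent to $\Pic^0(Y)$). By \cite[Proposition~2.14 and Theorem~2.15]{Fredrickson-1810.01554}, adapted to the parabolic setting as in \cite{FMSW22}, $g_{{\rm sf},\delta}(\dot X,\dot X)$ equals the $L^2$ norm of the \emph{decoupled} variation $(\dot A^{\rm dec},\dot\Phi^{\rm dec})$. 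This variation is computed from $h^{\rm dec}_q$ via the spectral line bundle on $Y$. It is automatically in Coulomb gauge, since the limiting configuration is diagonal in the eigenframe of $\Phi$ away from $D$. It is smooth on $X\setminus D$ and has controlled (conical or logarithmic) behaviour at the punctures and at the branch points.

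\textbf{Step 2: the true variation via the implicit function theorem.} Differentiate the identity $h=h^{\rm app}e^{2\gamma}$ of Theorem~\ref{thm:intro-construction} in $(\lambda,q)$. Then $(\dot A,\dot\Phi)$ equals the variation of $h^{\rm app}$ plus a gauge correction $d_{A}\dot\gamma$-type term plus a term from $\dot\gamma$. Theorem~\ref{thm:uniform-inverse} gives a polynomial bound on the inverse linearized operator. Proposition~\ref{prop:correction} gives exponential smallness of $\gamma$ in weighted $C^{2,\alpha}$. Applying the inverse to the differentiated equation yields $\|\dot\gamma\|\le C_I e^{-c_IT}$, since the differentiated residual is still supported on the transition annulus and is exponentially small there. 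After restoring Coulomb gauge with one more application of the uniform inverse, the Coulomb representative of $\dot X$ differs from the variation of $h^{\rm app}$ by $O(e^{-c_IT})$ relative to $|\dot X|_{g_{\rm sf}}$.

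\textbf{Step 3: comparing approximate and decoupled variations.} Away from fixed disks around the three punctures, $h^{\rm app}=h^{\rm dec}_q$, so the variations agree there. Inside the disks, the Toda model metrics converge to the decoupled metric exponentially in $T|z|^{\,\cdot}$ off a shrinking core, by the cyclic Toda asymptotics \cite{FN17,LM2010b}. I must show that the $L^2$ contribution of the model variation minus the decoupled variation over each disk is $O(e^{-c_IT})$. This is the main obstacle. Both integrands are nonzero near the puncture; the Toda solution is not exponentially close to the fiducial one inside a region of radius $\sim T^{-3/2}$; and the decoupled variation is singular at $z=0$.

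My first attempt is the rescaling argument of \cite{FMSW22}. The model Higgs field is totally ramified with residue-free leading term $\lambda z^{-?}$. Under the scaling $z\mapsto T^{a}z$, both the Toda and decoupled configurations become $T$-independent. Their $L^2$ norms of variations then differ by an exact integral, which the rotational symmetry ($\lambda\mapsto\omega\lambda$, cyclic $\Z/3$) forces to vanish. It vanishes by Stokes' theorem as in Dumas--Neitzke \cite{DN19}, because the horizontal variation of a cyclic Toda solution along $\lambda$ is a gauge transformation plus a scaling. Vertical variations are locally trivial near the punctures, since $q$ only enters through the line bundle twist, which is a flat unitary change on the disk. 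What remains is the exterior tail $\int_{|w|>R}$, which is exponentially small.

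\textbf{Step 4: conclusion.} Summing Steps 2 and 3 and polarizing gives the estimate for the full tensor. Uniformity in $(\phi,q,\delta)$ holds because every constant ($C_I$, $c_I$, the inverse bound, and the Toda decay rates) depends only on the weights through $I$. Compatibility with the quotient action then lets the estimate descend via $\widetilde\Psi_\delta$.
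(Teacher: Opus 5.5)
Your Step 2 makes a false claim, and it fails exactly where the difficulty of the theorem lies. The variation of $h^{\rm app}$ is \emph{not} nearly Coulomb on the Toda core $\{3T|w_0|\lesssim1\}$. So its Coulomb correction is not $O(e^{-c_IT})$, and the polynomially bounded inverse cannot make it so, because the source is not small.

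Concretely, $h^{\rm app}$ (and $h$) is independent of $\arg\lambda$, so your representative of $\partial_\phi$ is just $\mathbf F_\nu=(0,M_\nu)$. But $(D_T'')^{*,\mathrm{app}}\mathbf F_\nu$ is a multiple of $[\Phi_T^{\dagger_T},M_\nu]$. This is diagonal with entries of size $T|\xi|\,|e^{2d_{i-1}}-e^{2d_i}|$ on the whole core, not only on the transition annulus. The actual Coulomb representative is $\mathbf F_{\nu_\xi}-D_T''s_\xi$ with $s_\xi=\frac{\xi}{T}\rho\,\diag(u_i')$. This is complex linear in $\xi$. For real $\xi$ it agrees with what differentiating $h^{\rm app}$ in $T$ produces. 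For imaginary $\xi$ it is a unitary gauge term your construction misses, and $\|D_T''s_\xi\|_{L^2}$ is of order $|\xi|T^{-1}$. Accordingly
$\|\mathbf F_\nu\|_{\rm app}^2-\|\nu\|_Y^2=\frac{8\pi}{3}\frac{|\xi|^2}{T^2}\int_0^{3T\varrho_0}\rho(S-3)\,d\rho+O(e^{-cT})$,
where $S=\sum_ie^{2d_i}>3$ on the core. This error is of exact order $T^{-2}$.

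The vertical directions are not ``locally trivial'' either. If you localize the $q$-variation away from the punctures, $\mathbf F_{\tau'}$ is far from Coulomb: its correction $D_T''t_\tau$ is comparable to $\|\tau\|_Y$. The harmonic $\mathbf F_\tau$, on the other hand, is nonzero on the disks and fails the Coulomb condition on the Toda core.

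What is missing is these explicit local Coulomb representatives and the boundary terms they produce at the puncture; a symmetry argument does not supply them. The Coulomb property of $s_\xi$ is the differentiated Toda equation. With it, the horizontal discrepancy is $\frac{8\pi}{3}\frac{\xi\bar\eta}{T^2}\int_0^{3T\varrho_0}\frac12\frac{d}{d\rho}\bigl[\rho^2(S-3)\bigr]\,d\rho$. This does not vanish by rotational symmetry. Its upper endpoint is exponentially small by Toda decay, and its lower endpoint vanishes only because $\rho^2(S-3)=O(\rho^{2-6\delta})$, i.e.\ because $\delta<1/3$.

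For vertical classes the paper takes $S_p=M^{\dagger_T}_{\overline{\beta_p}}$. Then $D_T'S_p=0$ makes $V_p=D_T''S_p$ Coulomb, and Green's formula reduces its disk energy to boundary terms, the inner one being $O(r^{2-6\sup I})$.

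You also never bound the cross term. Polarizing separate horizontal and vertical estimates does not control $g_{L^2}(\text{horizontal},\text{vertical})$. The paper shows the approximate mixed pairing vanishes pointwise: in the cyclic frame $a_{\mathbf H'}$ is diagonal, $\varphi_{\mathbf H'}\in\operatorname{span}\{P_i\}$, $a_{\mathbf V'}\in\operatorname{span}\{P_i^{\mathsf T}\}$, and $\varphi_{\mathbf V'}$ is diagonal.

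Once these representatives are in place, $(D_T'')^{*,\mathrm{app}}(\mathbf H'+\mathbf V')$ really is supported on the transition annuli and exponentially small. Passing to $h_T$ then needs only coercivity of $Q_{\rm app}$, transferred to $Q_{\rm ex}$ through $\|\gamma_T\|_{L^\infty}\leq Ce^{-cT}$, with no estimate on $\dot\gamma$. What the paper must add instead is the identification of the tangent class via Konno's Hodge decomposition. Your differentiation of the family would have supplied that directly.
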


We construct horizontal and vertical representatives from the
spectral data and compare their pairings, following the approach
of \cite{Mochizuki23}.  Near the punctures, the Toda equation and
Green's formula reduce the horizontal and vertical pairing errors
to boundary terms.  The cyclic matrix structure makes the
approximate mixed pairing vanish.  The parabolic exponents ensure that the inner
boundary terms vanish uniformly on $I$.  Uniform coercivity
controls the correction to harmonic representatives.  We then
identify their deformation classes and transfer the pairing
estimates to $g_{L^2}$.  This proves Theorem~\ref{thm:metric-comparison}.

Finally, we show that the spectral periods determine a flat ALG model.  We
use the notation $\mathcal C_{\beta,\tau,L}$ of
\cite[Definitions~1.1 and~2.5]{CV24}, with metric $g_{\mathcal C}$,
radial coordinate $R_{\rm ALG}$, and cover coordinates $(u,v)$.
Write $\Omega_\delta^{\rm Serre}$ for the holomorphic symplectic form
defined by the trace and Serre duality, and set
$\Omega_\delta=\omega_2+\ii\omega_3$.
For deformation pairs ordered as $(a,\varphi)$ we have
\begin{equation}\label{eq:holomorphic-symplectic-pairing}
 \Omega_\delta((a_1,\varphi_1),(a_2,\varphi_2))
 =2\int_X\tr(\varphi_1\wedge a_2-\varphi_2\wedge a_1).
\end{equation}

\begin{thm}\label{thm:main}
For $0<\delta<1/3$, the Hitchin metric on $\calM_\delta$ is a
complete ALG gravitational instanton of type $E_6$, with parameters
\begin{equation}\label{eq:intro-alg-parameters}
 (\beta,\tau,L)=
 \left(\frac13,e^{2\pi\ii/3},
             2\pi\sqrt{\frac2{\sqrt3}}\right).
\end{equation}
There is a biholomorphic end identification
$$
 \Psi_\delta:
 (\{|\lambda|>R\}\times\Pic^0(Y))/\mu_3
 \longrightarrow h^{-1}\{|t|>R^3\}
$$
such that
$$
 \Psi_\delta^*\Omega_\delta=du\wedge dv,
 \qquad
 \Psi_\delta^*\Omega_\delta^{\rm Serre}
 =\frac1{4\pi\ii}\,du\wedge dv.
$$
For each $I\Subset(0,1/3)$, the radius $R$ and a constant $c_I>0$
can be chosen so that, for every integer $k\geq0$,
\begin{equation}\label{eq:intro-derivative-decay}
 \sup_{\delta\in I}
 \left|(\nabla^{g_{\mathcal C}})^k
       (\Psi_\delta^*g_{L^2,\delta}-g_{\mathcal C})
       \right|_{g_{\mathcal C}}
 \leq C_{I,k}e^{-c_IR_{\rm ALG}}.
\end{equation}
The analogous estimates hold for the three K\"ahler forms.
In particular, the ALG order is at least $2$.
\end{thm}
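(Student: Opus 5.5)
The plan is to derive Theorem~\ref{thm:main} from Theorem~\ref{thm:intro-metric-comparison}, together with an explicit computation of the semi-flat metric in the coordinates $(\lambda,q)$.

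First, I would identify the semi-flat metric with the flat model $\mathcal C_{\beta,\tau,L}$. Over the regular locus the spectral curve for parameter $t=\lambda^3$ is $Y$ rescaled by $\lambda$, and the tautological one-form pulls back to $\lambda\,\eta$, where $\eta$ is the spectral one-form on $Y$. The special K\"ahler base coordinate is therefore a period $u=c\,\lambda\int_\gamma\eta$, which is linear in $\lambda$. The fiber $\Pic^0(Y)$ carries the flat metric given by the spectral Hodge pairing, which is independent of $\lambda$ up to the normalization fixed by Fredrickson's formula. Because $Y$ is an elliptic curve with an order-three automorphism, it is the hexagonal curve, so $\tau=e^{2\pi\ii/3}$. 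The $\mu_3$-action $(\lambda,q)\mapsto(\omega\lambda,\omega^{-1}q)$ makes the base a cone of angle $2\pi/3$, giving $\beta=1/3$. This is the $IV^*$ fiber at infinity, which is type $E_6$. I would then choose holomorphic coordinates $u$ (linear in $\lambda$) and $v$ (the Abel--Jacobi coordinate on $\Pic^0(Y)$), normalized so that $\Psi_\delta^*\Omega_\delta=du\wedge dv$. This requires evaluating \eqref{eq:holomorphic-symplectic-pairing} on horizontal and vertical representatives; the horizontal variation pairs with a vertical one through the residue/Serre pairing $\int_Y\eta\wedge\bar\alpha$. The factor $1/(4\pi\ii)$ comes from comparing $\Omega^{\rm Serre}$ with the factor $2$ in \eqref{eq:holomorphic-symplectic-pairing} and the $-2\ii$ convention. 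The constant $L$ is fixed by the lattice area, $\int_Y|\eta|^2$, computed from the explicit Fermat-type model $y^3=x(x-1)$ and a Beta-function integral. I expect this constant-chasing, including matching the conventions of \cite{CV24}, to be the most error-prone step. I would check it against the volume form $\tfrac12\omega_1^2=\tfrac14\Omega\wedge\bar\Omega$.

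Second, for the derivative estimates \eqref{eq:intro-derivative-decay}: $R_{\rm ALG}$ is comparable to $T=|\lambda|$, since $u$ is linear in $\lambda$. Theorem~\ref{thm:intro-metric-comparison} gives the $C^0$ bound uniformly in $\delta\in I$. To get higher derivatives, I would use that both $g_{L^2}$ and $g_{\mathcal C}$ are hyperk\"ahler with the same holomorphic symplectic form $du\wedge dv$ (which is exact in both) and the same complex structure $I_1$. Then $\omega_{L^2}-\omega_{\mathcal C}$ is a closed $(1,1)$-form with exponentially small $C^0$ norm, and its potential satisfies a complex Monge--Amp\`ere equation $(\omega_{\mathcal C}+dd^c\phi)^2=\omega_{\mathcal C}^2$. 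Locally on the flat model this equation is uniformly elliptic, since the injectivity radius is bounded below after passing to local covers of the fibers. Evans--Krylov and Schauder estimates on unit balls then bootstrap the $C^0$ bound to all $C^k$ with the same exponential rate, with constants uniform in $\delta$ because the flat model is $\delta$-independent. The analogous estimates for $\omega_2,\omega_3$ follow because these are exactly $\operatorname{Re}$ and $\operatorname{Im}$ of $du\wedge dv$, and $\omega_1$ then follows.

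Third, I would establish completeness and curvature. Completeness follows from the end identification: the interior $h^{-1}\{|t|\le R^3\}$ is compact by properness of the Hitchin map, and the end is asymptotic to a complete cone end. Square-integrable curvature follows from the exponential decay, since $g_{\mathcal C}$ is flat and $|\Rm|\le Ce^{-cR_{\rm ALG}}$ against quadratic volume growth. The ALG order is at least $2$, indeed arbitrary, because the decay is exponential. Finally, $\Psi_\delta$ is a biholomorphism onto $h^{-1}\{|t|>R^3\}$ by Proposition~\ref{prop:spectral}, since all fibers there are regular.
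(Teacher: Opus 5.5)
\textbf{Analytic steps.} These follow the paper:
\begin{itemize}
\item converting the exponential $C^0$ comparison to $R_{\rm ALG}$ through the linear relation between $u$ and $\lambda$;
\item local Ricci-flat K\"ahler regularity on lifted Euclidean balls;
\item completeness from properness of $h$;
\item $L^2$ curvature from exponential decay against quadratic volume growth.
\end{itemize}
Your Monge--Amp\`ere/Evans--Krylov route is a legitimate substitute for the paper's use of \cite{HT20} plus interpolation. If you run it through the linearized equation, it even keeps the decay rate, which the paper halves.

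\textbf{The type $E_6$ claim (genuine gap).} You assert that $\beta=1/3$ ``is the $IV^*$ fiber at infinity.'' This is wrong, and it contradicts your own cone angle. In the table of \cite{CV24}, cone angle $2\pi/3$ corresponds to a fiber of type $IV$ at infinity, with dual graph $\widetilde A_2$. A fiber of type $IV^*$ at infinity would force $\beta=2/3$.

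The $E_6$ in the theorem refers to the central, nilpotent fiber over $t=0$. That fiber has type $IV^*$ with affine $E_6$ dual graph, and your proposal never examines it. The paper obtains it from an actual compactification:
\begin{itemize}
\item the Fourier--Mukai isomorphism of \cite{Gro12} identifies $\calM_\delta$ with $\mu_3$-$\mathrm{Hilb}(T^*Y)$;
\item the minimal resolution of $(Y\times\Pp^1)/\mu_3$ has singular fibers $3D_0+\sum_j(2E_{1j}+E_{2j})$ over $0$ and $3D_\infty+\sum_jF_j$ at infinity (Theorem~\ref{thm:elliptic-compactification});
\item the rational curves in the central fiber are also what the paper uses to prove nonflatness.
\end{itemize}
Some such algebraic input is unavoidable. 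For instance, suppose a relatively minimal rational elliptic compactification with a type-$IV$ fiber at infinity is available. Then $e=12$, the smoothness of the fibers over $t\neq0$, and the order-three monodromy force the central fiber to be $IV^*$. But you still need the compactification.

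\textbf{The constant $L$ (smaller error).} The fiber area of the semi-flat metric is not $\int_Y|\eta|^2$. The vertical metric is the Hodge $L^2$ metric on $H^1(Y;\R)/2\pi H^1(Y;\Z)$. Unimodularity of the intersection form gives this torus area $4\pi^2$ for every elliptic curve, so $L^2\Im\omega=4\pi^2$ with no transcendental factor.

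The Beta integral enters only through the periods of the spectral differential. That is, it fixes the scaling of $u$, and hence of $R_{\rm ALG}$, against $\lambda$. Computing $L$ from $\int_Y|\eta|^2$ as you describe would give a $\Gamma(1/3)$-dependent value, contradicting the stated $L=2\pi\sqrt{2/\sqrt3}$.
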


The period calculation fixes the lattice, quotient action, and metric
constants independently of $\delta$.  The estimates for derivatives
of every order follow from Theorem~\ref{thm:intro-metric-comparison}, local
Ricci-flat K\"ahler regularity \cite[Proposition~2.1]{HT20}, and
interpolation.  The flat end also gives quadratic volume growth:
for a base point $p$ there are $c,C,\rho_0>0$ with
$$
 c\rho^2\leq\Vol_{g_{L^2}}B(p,\rho)\leq C\rho^2,
 \qquad\rho\geq\rho_0.
$$

The complex surface and its elliptic compactification are independent
of $\delta$ in the chamber.  The isomorphism induced by the
Fourier--Mukai construction in \cite[Theorem~4.1]{Gro12} preserves
the holomorphic symplectic forms by the calculation in
\cite[Lemma~3.2 and the proof of Proposition~3.3]{Jia26}.
The resolution of $(Y\times\Pp^1)/\mu_3$ described in
\cite[Proposition~6.3 and Lemma~6.10]{Huang26}, followed by contraction
of the boundary $(-1)$-curve, gives a relatively minimal elliptic
compactification.  Its only singular fibers have Kodaira types $IV^*$
over zero and $IV$ at infinity.  The type-$IV$ fiber at infinity corresponds to the cone angle
$2\pi/3$ of the ALG model \cite{CC21b}.  The dual intersection graph of the central type-$IV^*$ fiber is the
affine $E_6$ diagram.
Theorem~\ref{thm:elliptic-compactification} describes the boundary
divisor and its relatively minimal image.

The paper is organized as follows.  Section~\ref{sec:preliminaries} fixes the parabolic and spectral
data.  Section~\ref{sec:local-analysis} constructs the Toda
models and proves their uniform estimates.  Section
\ref{sec:global-analysis} constructs the approximate metric and
solves the nonlinear equation.  Section~\ref{sec:metric-comparison}
proves the exponential metric comparison, and
Section~\ref{sec:alg} identifies the compactification and the ALG
geometry.

\subsection*{Acknowledgments}

The author thanks Prof.~Qiongling Li for her support and encouragement,
and Prof.~Takuro Mochizuki for his mini-courses at USTC, Hefei in the autumn
of 2024 and for his generous explanations about his seminal works in this area.

\subsection*{Disclosure on the Use of AI}
The research idea is the author's.  During the preparation of this work,
the author used ChatGPT for literature searches, checking calculations,
proofreading, and language editing.  Notably, ChatGPT brought \cite{HT20} to the
author's attention; its local regularity estimate is used in
Theorem~\ref{thm:derivative-decay}.  It also suggested \cite{Huang26,Jia26},
which are used in Section~\ref{sec:alg}.  The author takes full
responsibility for the contents of this paper.

\section{Preliminaries}
\label{sec:preliminaries}
In this section, we introduce parabolic Higgs bundles, their moduli spaces,
and the Hitchin and semi-flat hyperk\"ahler metrics.
\subsection{Parabolic Higgs bundles}

\begin{defn}
Let $X$ be a smooth projective curve and $D\subset X$ a finite set.
A parabolic rank-three bundle with full flags consists of a holomorphic
bundle $E$, flags
$$
 E_p=F_p^1\supset F_p^2\supset F_p^3\supset F_p^4=0,
 \qquad p\in D,
$$
with one-dimensional successive quotients, and real lifts of the weights
$$
 \alpha_{p,1}<\alpha_{p,2}<\alpha_{p,3}<\alpha_{p,1}+1.
$$
A logarithmic Higgs field is a holomorphic map
$$
 \Phi:E\longrightarrow E\otimes K_X(D)
$$
whose residues preserve the flags.
\end{defn}

\begin{defn}
The Higgs field is called strongly parabolic if
\begin{equation}\label{eq:strongly-parabolic-condition}
 \operatorname{Res}_p\Phi(F_p^j)\subset F_p^{j+1}
 \qquad (p\in D,\ 1\leq j\leq3).
\end{equation}

The parabolic degree and slope are defined as
\begin{equation}\label{eq:parabolic-degree}
 \pardeg E
 =
 \deg E+
 \sum_{p\in D}\sum_{j=1}^3
 \alpha_{p,j}\dim(F_p^j/F_p^{j+1}),
 \qquad
 \mu_{\rm par}(E)=\frac{\pardeg E}{\rank E}.
\end{equation}
A parabolic Higgs bundle is stable if every nonzero proper saturated
$\Phi$-invariant subbundle $F\subset E$, equipped with the induced
parabolic structure, satisfies
$$
 \mu_{\rm par}(F)<\mu_{\rm par}(E).
$$
Semistability is defined by the weak inequality.
\end{defn}

The sheaf $\ParEnd(E)$ consists of holomorphic endomorphisms whose
value at each $p\in D$ preserves every step of the parabolic flag.
The subsheaf $\SParEnd(E)$ consists of endomorphisms that strictly
lower every flag step.  Their trace-free subsheaves are
$$
 \begin{aligned}
 \ParEnd_0(E)&=\ker\!\left(\tr:\ParEnd(E)\to\mathcal O_X\right),\\
 \SParEnd_0(E)&=\ker\!\left(\tr:\SParEnd(E)\to\mathcal O_X\right).
 \end{aligned}
$$

\subsection{Harmonic metrics and the Hitchin metric}
\begin{defn}
A Hermitian metric $h$ on $E|_{X\setminus D}$ is adapted to the
parabolic structure when its growth filtration agrees with the given
weighted filtration.  In a local coordinate $z$ centered at $p$ and
a flag-compatible holomorphic frame $e_1,e_2,e_3$, the diagonal metric
$$
 h_{\boldsymbol\alpha}
 =
 \diag\bigl(
 |z|^{2\alpha_{p,1}},
 |z|^{2\alpha_{p,2}},
 |z|^{2\alpha_{p,3}}
 \bigr)
$$
is the standard adapted local model.
\end{defn}

Fix a determinant trivialization
$\tau:\det E\xrightarrow{\sim}\mathcal O_X$.  For a Hermitian
metric $h$, write $A^{\dagger_h}$ for the fiberwise metric adjoint of $A$.
We consider the degree-zero data with $\sum_j\alpha_{p,j}=0$ at
every puncture and normalize the determinant metric by
$|\tau^{-1}(1)|_{\det h}=1$.  The adapted harmonic metric satisfies
\begin{equation}\label{eq:hitchin-equations}
 F_{D_h}+[\Phi,\Phi^{\dagger_h}]=0,
 \qquad
 \bar{\partial}_{D_h}\Phi=0.
\end{equation}
Existence and uniqueness in the stable tame setting are supplied by
\cite{Simpson1990}; the resulting parabolic moduli metric is the
hyperk\"ahler quotient metric of \cite{Konno1993}.

Throughout, $\tr$ denotes the usual matrix trace.  We use
the induced Hermitian pairing
$$
 \langle A,B\rangle_h=\tr(AB^{\dagger_h}).
$$
Let $(a,\varphi)$ and $(b,\psi)$ be the $h$-harmonic
representatives of tangent classes in the trace-free strongly
parabolic deformation complex.  They are endomorphism-valued
$(0,1)$- and $(1,0)$-forms on $X\setminus D$ in Coulomb gauge.
The Hitchin $L^2$ metric is defined as
\begin{equation}\label{eq:hitchin-metric}
 g_{L^2}\bigl((a,\varphi),(b,\psi)\bigr)
 =
 2\operatorname{Re}\int_{X\setminus D}
 \left(
 \langle a,b\rangle_h
 +
 \langle\varphi,\psi\rangle_h
 \right)\dd A.
\end{equation}
The form indices are contracted pointwise before integration.
These representatives are supplied by the parabolic gauge theory of
\cite[Section~3.3]{Konno1993}, with the determinant and weights held fixed.

\subsection{Spectral curves and the semi-flat metric}
\begin{defn}
For a logarithmic Higgs bundle, the spectral curve is the zero scheme
$$
 \Sigma_\Phi
 =
 \left\{
 (x,\eta)\in\operatorname{Tot}(K_X(D)):
 \det(\eta-\Phi_x)=0
 \right\}.
$$
The regular locus of the Hitchin fibration is the open set on which this
curve is smooth.
\end{defn}

Over the smooth spectral locus, the BNR correspondence identifies the
underlying logarithmic Higgs bundle with a line bundle of prescribed
degree on its spectral curve.  The corresponding fiber is a Picard
torsor; fixing the determinant cuts out a fiber of the norm map
\cite[Proposition~3.6]{BNR89}.  In the present family, total ramification
recovers the full flags from the spectral line bundle, as in
Lemma~\ref{lem:spectral-local-form}.  Since $X=\Pp^1$ and
$\Pic^0(X)=0$, the regular fiber is identified with $\Pic^3(Y)$.
The line bundle $L_0$ defined in Section~\ref{subsec:regular-locus}
gives the identification
$$
 \Pic^0(Y)\xrightarrow{\sim}\Pic^3(Y),\qquad
 q\longmapsto L_0\otimes P_q.
$$

\begin{defn}
A vertical tangent vector is tangent to a smooth Hitchin fiber.  In the
parametrization of Proposition~\ref{prop:spectral}, horizontal vectors
are obtained by holding $q$ fixed; this is the Gauss--Manin splitting
with origin $L_0$.  The semi-flat metric is the standard
hyperk\"ahler metric of this algebraically completely integrable system: its
horizontal and vertical restrictions are the spectral $L^2$ pairings,
and the two distributions are orthogonal
\cite[Section~7]{FMSW22},
\cite[Proposition~2.14 and Theorem~2.15]{Fredrickson-1810.01554},
and \cite[Proposition~4.6]{Mochizuki23}.
\end{defn}

Explicit periods and flat
coefficients will be computed in Section~\ref{sec:alg}.

\subsection{The moduli space}
\label{subsec:moduli}

Let
$$
 X=\Pp^1_\C,\qquad D=\{0,1,\infty\},\qquad 0<\delta<1/3.
$$
The original weights $\boldsymbol\alpha^\delta$ are
\begin{equation}\label{eq:original-weights}
 (0,1/3,2/3+\delta)\quad\text{at }0,\qquad
 (0,1/3,2/3)\quad\text{at }1,\infty.
\end{equation}
We take $\deg E=-3$; these weights give $\pardeg E=\delta$.
Let $L_\delta$ be the filtered line bundle whose underlying holomorphic
line bundle is $\mathcal O_X(1)$, whose scalar weights are
$$
 \ell_0=-\frac{1+\delta}{3},\qquad
 \ell_1=\ell_\infty=-\frac13.
$$
Fix an isomorphism of underlying holomorphic line bundles
$$
 L_\delta^{\otimes3}\xrightarrow{\sim}\mathcal O_X(3).
$$
Its parabolic degree is
$$
 \pardeg L_\delta
 =1-\frac13-\frac13-\frac{1+\delta}{3}
 =-\frac{\delta}{3}.
$$
Adding the scalar weights gives the degree-zero normalized weights
\begin{equation}\label{eq:normalized-weights}
 \widehat{\boldsymbol\alpha}_p^\delta=
 \begin{cases}
 \left(-\dfrac13-\dfrac\delta3,-\dfrac\delta3,
       \dfrac13+\dfrac{2\delta}{3}\right),&p=0,\\[2mm]
 \left(-\dfrac13,0,\dfrac13\right),&p=1,\infty.
 \end{cases}
\end{equation}
Consequently
$$
 \deg(E\otimes L_\delta)=0,\qquad
 \pardeg(E\otimes L_\delta)=0.
$$

\begin{defn}
\label{defn:moduli-space}
The space $\calM_\delta$ is the coarse moduli space of stable,
strongly parabolic rank-three Higgs bundles
$$
 (E,\Phi,F^\bullet,\tau),\qquad
 \Phi\in H^0\!\left(
 X,\SParEnd_0(E)\otimes K_X(D)
 \right),
$$
with the degree-zero normalized weights
$\widehat{\boldsymbol\alpha}^\delta$ displayed above, $\deg E=0$,
and a chosen determinant trivialization
$\tau:\det E\xrightarrow{\sim}\mathcal O_X$.  Isomorphisms preserve
the flags and $\tau$.
\end{defn}

The normalization is compatible with families and with the deformation
pairings.  For a parameter scheme $S$, write $p_X,p_S$ for the projections
from $X\times S$ and set
$$
 N_{\mathcal E}
 =p_{S*}\bigl(\det\mathcal E\otimes p_X^*\mathcal O_X(3)\bigr)
$$
for a family with the original weights and degree $-3$.

\begin{prop}
\label{prop:chamber}
Tensoring by $L_\delta$ identifies families with the original weights and
an assigned trivialization of $N_{\mathcal E}$ with the
determinant-trivialized families defining $\calM_\delta$.  This
identification commutes with base change and preserves stability,
the strongly parabolic condition, the Hitchin map, and the trace-free
deformation complexes and their trace and $L^2$ pairings.  The interval
$0<\delta<1/3$ contains no stability wall.
\end{prop}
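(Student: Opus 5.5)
The plan is to treat the two assertions separately: first the formal compatibility of tensoring by $L_\delta$, then the absence of walls, which reduces to a small arithmetic check on parabolic degrees of subbundles.

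\emph{The identification.} For a family $\mathcal E$ with the original weights, degree $-3$ and full flags, I set $\mathcal E'=\mathcal E\otimes p_X^*L_\delta$. It has the same flags, and every weight at $p$ is shifted by $\ell_p$, which produces exactly \eqref{eq:normalized-weights}. Since $\operatorname{End}(\mathcal E')=\operatorname{End}(\mathcal E)$ canonically, the sheaves $\ParEnd$, $\SParEnd$ and their trace-free parts are literally unchanged. Hence $\Phi$, the strongly parabolic condition \eqref{eq:strongly-parabolic-condition} and the characteristic polynomial (so the Hitchin map) are unchanged too. For the determinant, $\det\mathcal E'=\det\mathcal E\otimes p_X^*L_\delta^{\otimes3}\cong\det\mathcal E\otimes p_X^*\mathcal O_X(3)$ via the fixed isomorphism. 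This line bundle has degree $0$ on every fiber $\Pp^1\times\{s\}$, so it is fiberwise trivial. By cohomology and base change, $N_{\mathcal E}$ is a line bundle on $S$ whose formation commutes with base change, and the natural map $p_S^*N_{\mathcal E}\to\det\mathcal E'$ is an isomorphism. A trivialization of $N_{\mathcal E}$ is therefore the same datum as a trivialization $\tau$ of $\det\mathcal E'$, compatibly with pullback. The inverse operation is tensoring by $L_\delta^{-1}$. For the pairings: if $h$ is adapted on $E$, then $h\otimes h_{L}$ is adapted on $E\otimes L_\delta$, where $h_L$ is locally $|z|^{2\ell_p}$ times a smooth metric. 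The induced metrics on $\operatorname{End}$ coincide, so the harmonic equation \eqref{eq:hitchin-equations} for trace-free data, the deformation complexes, the trace pairing and \eqref{eq:hitchin-metric} are all preserved. The remaining freedom in $h_L$ is fixed by the determinant normalization.

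\emph{Stability.} $\Phi$-invariant saturated subbundles $F\subset E$ correspond bijectively to $F\otimes L_\delta$, with the induced parabolic structure. Moreover $\pardeg(F\otimes L_\delta)=\pardeg F+\rank F\cdot\pardeg L_\delta$, so all parabolic slopes shift by the same constant $-\delta/3$, and the stability inequalities are equivalent.

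\emph{No walls.} A wall is a value of $\delta$ at which some subbundle $F$ of rank $r\in\{1,2\}$ can satisfy $\mu_{\rm par}(F)=\mu_{\rm par}(E)=\delta/3$ in the original normalization. Write $\pardeg F=d+a/3+\varepsilon\delta$ with $d,a\in\Z$. Here $\varepsilon\in\{0,1\}$ records whether the weight $2/3+\delta$ at $0$ is among the induced weights; the other induced weights lie in $\tfrac13\Z$.
\begin{itemize}
\item For $r=1$, equality reads $(1-3\varepsilon)\delta\in\Z$, which forces $\delta\in\Z$ or $2\delta\in\Z$.
\item For $r=2$, the condition $\pardeg F=2\delta/3$ gives $(2-3\varepsilon)\delta\in\Z$, which again forces $2\delta\in\Z$ or $\delta\in\Z$.
\end{itemize}
Neither can hold for $0<\delta<1/3$. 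So on this interval each stability inequality is the sign of an affine function of $\delta$ that never vanishes. Its sign is therefore constant, semistability coincides with stability, and the stable locus (hence $\calM_\delta$ as a space) does not change.

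The only step needing real care is the base-change statement for $N_{\mathcal E}$, in particular that its formation commutes with arbitrary base change. The fiberwise-degree-zero argument above should settle it.
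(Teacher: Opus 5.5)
Your proposal is correct and follows essentially the same route as the paper. Both arguments use cohomology and base change for the fiberwise trivial line $\det\mathcal E\otimes p_X^*\mathcal O_X(3)$, the invariance of $\operatorname{End}$, residues, slope differences and the endomorphism metric under tensoring by a line bundle, and an arithmetic check of the parabolic degrees of rank-one and rank-two subbundles. The only cosmetic difference is that you run the wall computation in the original normalization against $\mu_{\rm par}(E)=\delta/3$, while the paper uses the normalized weights and checks that $\pardeg F\in\{(k-\delta)/3,(k+2\delta)/3\}$ or $\{(k-2\delta)/3,(k+\delta)/3\}$ never vanishes; the two are equivalent by the uniform slope shift.
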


\begin{proof}
Every fiber of
$\det\mathcal E\otimes p_X^*\mathcal O_X(3)$ is the trivial line on
$\Pp^1$.  Cohomology and base change therefore make $N_{\mathcal E}$
a line bundle and give an evaluation isomorphism compatible with base change:
$$
 p_S^*N_{\mathcal E}
 \simeq\det\mathcal E\otimes p_X^*\mathcal O_X(3).
$$
The chosen isomorphism $L_\delta^{\otimes3}\simeq\mathcal O_X(3)$ and
a trivialization of $N_{\mathcal E}$ now trivialize
$\det(\mathcal E\otimes p_X^*L_\delta)$.  Tensoring by
$L_\delta^{-1}$ reverses this construction.  The degree and weight
calculations above give the normalized parabolic data.

Tensoring preserves the residue matrices and their flag-lowering
property.  For every invariant saturated subbundle $F$,
$$
 \mu_{\rm par}(F\otimes L_\delta)
 -\mu_{\rm par}(E\otimes L_\delta)
 =\mu_{\rm par}(F)-\mu_{\rm par}(E).
$$
It therefore preserves stability and the characteristic polynomial.
The identifications $A\mapsto A\otimes1$ of $\ParEnd_0$ and
$\SParEnd_0$ commute with the deformation differentials.  For any line
metric $h_L$,
$$
 \tr\bigl((A\otimes1)
 (B\otimes1)^{\dagger_{h\otimes h_L}}\bigr)
 =\tr(AB^{\dagger_h}),
$$
and the scalar Chern connection acts trivially on endomorphisms.  This
also identifies the $L^2$ pairings at each fixed weight.

Finally, a rank-one invariant subbundle has parabolic degree of one of
the forms
$$
 \frac{k-\delta}{3},\qquad\frac{k+2\delta}{3},\qquad k\in\Z,
$$
whereas in rank two the possibilities are
$$
 \frac{k-2\delta}{3},\qquad\frac{k+\delta}{3},\qquad k\in\Z.
$$
These follow by selecting the induced weights at the three marked
points and absorbing the integral degree into $k$.  None vanishes for
$0<\delta<1/3$, which proves the chamber assertion.
\end{proof}

In particular, every semistable object with these weights is stable.
For fixed data $(E,\Phi,F^\bullet,\tau)$, stability is independent of
$\delta\in(0,1/3)$.  The identity on these data identifies the coarse
moduli spaces, and the parabolic deformation complex is unchanged.
The adapted harmonic metrics depend on $\delta$.  These identifications
will be used throughout the analytic estimates.

Every automorphism of a stable parabolic Higgs bundle is scalar.
Hence an object of Definition~\ref{defn:moduli-space} has automorphism
group $\mu_3$.  These central automorphisms are compatible with base
change.  Rigidification by $\mu_3$ gives the algebraic space
$\calM_\delta$, and the projection from the stable moduli stack is a
$\mu_3$-gerbe \cite[Theorem~5.1.5]{ACV03}.

For a family over $S$, two determinant trivializations $\tau$ and
$f\tau$, with $f\in\Gamma(S,\mathcal O_S^\times)$, define isomorphic
objects after the \'etale base change given by $c^3=f^{-1}$:
the isomorphism is $c\Id$.  The line $N_{\mathcal E}$ is locally
trivial on $S$, so Proposition~\ref{prop:chamber} induces an isomorphism
from the coarse moduli space with the original weights and degree $-3$
to $\calM_\delta$.

We take the expansion
$$
 \det(\eta-\Phi)
 =
 \eta^3+c_1(\Phi)\eta^2+c_2(\Phi)\eta+c_3(\Phi).
$$
Nilpotence of every residue improves the possible pole orders to
$$
 c_1(\Phi)\in H^0(X,K_X),\quad
 c_2(\Phi)\in H^0(X,K_X^2(D)),\quad
 c_3(\Phi)\in H^0(X,K_X^3(2D)).
$$
The respective degrees are $-2,-1,0$.  The first two spaces vanish,
while
$$
 H^0(X,K_X^3(2D))\simeq\C.
$$
We fix its generator
$$
 q_1=\frac{dz^3}{z^2(z-1)^2}
$$
and characterize the Hitchin map by
\begin{equation}\label{eq:hitchin-map}
 h:\calM_\delta\longrightarrow\C,
 \qquad h(E,\Phi)=t,
 \qquad
 \det(\eta-\Phi)=\eta^3+tq_1.
\end{equation}
Homogeneity gives
$$
 \Phi\longmapsto a\Phi
 \quad\Longrightarrow\quad
 t\longmapsto a^3t.
$$

\subsection{The regular locus of the Hitchin fibration}
\label{subsec:regular-locus}

The spectral curve over $t=1$ is
\begin{equation}\label{eq:spectral-cover}
 Y=\overline{\{(z,\zeta):\zeta^3+z(z-1)=0\}},
 \qquad
 \pi:Y\longrightarrow X.
\end{equation}
Let
$$
 \sigma(z,\zeta)=(z,\omega\zeta)
$$
be its deck generator.
The cover is totally ramified over $D$
\cite[Proposition~2.6 and Sections~2.4--2.5]{ES26}.  Let
$r_0,r_1,r_\infty$ be its ramification points and put
$$
 R_Y=r_0+r_1+r_\infty.
$$
The valuation of $z(z-1)$ at zero is one, so the cubic polynomial
is irreducible over $\C(z)$ and $Y$ is connected.  The Riemann--Hurwitz formula gives
$$
 2g_Y-2=3(2g_X-2)+\sum_{p\in D}(3-1)=-6+6=0.
$$
The ramification divisor is $2R_Y$.  Connectedness of $Y$ will give
the scalar Poincar\'e inequality used in the coercive estimate.
We take $r_0$ as the origin of $Y$.  Let $\mathscr P$ be the
Poincar\'e line bundle on $Y\times\Pic^0(Y)$, equipped with
trivializations along $\{r_0\}\times\Pic^0(Y)$ and $Y\times\{0\}$
that agree at $(r_0,0)$
\cite[Chapter~II, Section~5]{BL92}
\cite[Definition~2.3 and Proposition~2.4]{BGHdJ16}.
For $q\in\Pic^0(Y)$, define
\begin{equation}\label{eq:spectral-lines}
 P_q=\mathscr P|_{Y\times\{q\}},\qquad
 L_0=\mathcal O_Y(R_Y),\qquad
 L_q=L_0\otimes P_q.
\end{equation}

On $Y$, put
\begin{equation}\label{eq:spectral-differential}
 \alpha
 =
 \zeta\frac{dz}{z(z-1)}
 =
 -\frac{dz}{\zeta^2}.
\end{equation}
Then
$$
 \alpha^3=-\pi^*q_1.
$$
This differential extends holomorphically across the ramification
points.  At $0$ and $1$, a parameter with $z-p$ of order three makes
both $dz$ and $\zeta^2$ vanish to order two.  At infinity, $z$ has a
pole of order three and $\zeta$ a pole of order two, so the orders in
$dz/\zeta^2$ again cancel.  Thus $\alpha$ is a nonzero holomorphic
one-form on the elliptic curve and has no zeros.  Moreover,
$\sigma^*\alpha=\omega\alpha$.
For each $p\in D$, choose a local coordinate $w_p$ on $Y$ with
$w_p(r_p)=0$ and $3\,dw_p=\alpha$, and use $x_p=w_p^3$ as the
induced base coordinate.  Thus
$$
 q_1=-x_p\left(\frac{dx_p}{x_p}\right)^3.
$$
If $a_p$ is a local holomorphic generator of $P_q$ and $s_p=w_p^{-1}a_p$,
the ordered basis
$$
 (s_p,w_ps_p,w_p^2s_p)
$$
has orders $-1,0,1$ relative to $a_p$.

The Poincar\'e bundle defines
$$
 \begin{aligned}
 \mathscr L&=\operatorname{pr}_1^*L_0\otimes\mathscr P
 &&\text{on }Y\times\Pic^0(Y),\\
 \mathscr E&=(\pi\times\Id)_*\mathscr L
 &&\text{on }X\times\Pic^0(Y).
 \end{aligned}
$$
We pull these bundles back to the product with $\C^\times$.
Multiplication by $\lambda\alpha$ equips $\mathscr E$ with the
Higgs field of the spectral family.

Since $\lambda\alpha=\lambda w_p\,dx_p/x_p$, spectral
multiplication has the following local form.

\begin{lem}
\label{lem:spectral-local-form}
In the basis $(s_p,w_ps_p,w_p^2s_p)$,
$$
 \Phi_{\lambda,q}
 =\lambda
 \begin{pmatrix}0&0&x_p\\1&0&0\\0&1&0\end{pmatrix}
 \frac{dx_p}{x_p}.
$$
Its residue is regular nilpotent, with image flag
$$
 E_p\supset\langle w_ps_p,w_p^2s_p\rangle
 \supset\langle w_p^2s_p\rangle\supset0.
$$
\end{lem}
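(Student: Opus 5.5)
The computation is local near a ramification point $r_p$, so the plan is to reduce to the model cover $w_p\mapsto x_p=w_p^3$ and compute the action of multiplication by $\lambda\alpha$ on the pushforward directly.

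\textbf{Step 1: a local frame of the pushforward.} Near $p\in D$ the cover is totally ramified, so $\pi^{-1}$ of a small disk is a single disk with coordinate $w_p$ and $x_p=w_p^3$. The pushforward $\pi_*$ of a line bundle with local generator $s_p$ is a free $\mathcal O_X$-module of rank three. A local section is $f(w_p)s_p$, and we decompose $f$ by residues of exponents modulo $3$:
$$
 f(w_p)=f_0(x_p)+w_pf_1(x_p)+w_p^2f_2(x_p).
$$
Hence $(s_p,w_ps_p,w_p^2s_p)$ is an $\mathcal O_X$-frame of $\mathscr E$ near $p$. We take $s_p=w_p^{-1}a_p$ as a generator of $L_q=\mathcal O_Y(R_Y)\otimes P_q$ near $r_p$; this is legitimate because $L_0=\mathcal O_Y(R_Y)$ permits a simple pole at $r_p$.

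\textbf{Step 2: compute the Higgs field.} From $x_p=w_p^3$ we get $dx_p/x_p=3\,dw_p/w_p$, so $\alpha=3\,dw_p=w_p\,dx_p/x_p$. The Higgs field therefore acts as multiplication by $\lambda w_p\,dx_p/x_p$ on $\pi_*L_q\otimes K_X(D)$. On the frame this gives
$$
 s_p\mapsto w_ps_p,\qquad
 w_ps_p\mapsto w_p^2s_p,\qquad
 w_p^2s_p\mapsto w_p^3s_p=x_ps_p,
$$
each tensored with $\lambda\,dx_p/x_p$. Writing these images as columns produces exactly the stated companion matrix: the subdiagonal ones and the entry $x_p$ in position $(1,3)$.

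\textbf{Step 3: the residue and its flag.} Setting $x_p=0$ gives the residue $\lambda N$, where $N$ sends $e_1\mapsto e_2\mapsto e_3\mapsto0$. Since $N^2\neq0$ and $\lambda\neq0$, this is a single Jordan block, so the residue is regular nilpotent. Its image is $\langle e_2,e_3\rangle$ and the image of its square is $\langle e_3\rangle$. This yields the displayed flag, and the residue strictly lowers it, consistent with \eqref{eq:strongly-parabolic-condition}.

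There is essentially no obstacle. The only point needing care is the normalization $3\,dw_p=\alpha$. It holds because $\alpha$ is holomorphic and nonvanishing at $r_p$, so $w_p=\tfrac13\int_{r_p}\alpha$ is a valid coordinate. With $x_p=w_p^3$ one must also check that $x_p$ is a genuine base coordinate. This follows because $\sigma^*\alpha=\omega\alpha$ and $\sigma$ fixes $r_p$, which give $\sigma^*w_p=\omega w_p$; hence $w_p^3$ is $\sigma$-invariant and descends to a coordinate on $X$ vanishing simply at $p$.
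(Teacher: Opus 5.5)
Your proposal is correct and follows the paper's argument: the paper likewise writes $\lambda\alpha=\lambda w_p\,dx_p/x_p$ and reads off the companion matrix from $w_p^3=x_p$, with regular nilpotence and the image flag of the residue immediate at $x_p=0$. Your extra checks fill in details the paper leaves implicit: the $\mathcal O_X$-frame decomposition of $f(w_p)s_p$, and $\sigma^*w_p=\omega w_p$, which makes $x_p=w_p^3$ a genuine base coordinate.
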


The matrix follows immediately from $w_p^3=x_p$.  It fixes the local
normalization used in the Toda equation below.

The relative spectral correspondence now provides a parametrization of
the regular locus, including its determinant normalization.

\begin{prop}
\label{prop:spectral}
For $t=\lambda^3\neq0$, define
$$
 f_\lambda:Y\xrightarrow{\sim}\Sigma_{\lambda^3},\qquad
 f_\lambda(z,\zeta)
 =
 \left(
 z,\lambda\zeta\frac{dz}{z(z-1)}
 \right).
$$
The relative BNR correspondence and the line bundle $\mathscr P$
with its specified trivializations define the covering map
$$
 \begin{aligned}
 \widetilde\Psi_\delta:
 \C^\times\times\Pic^0(Y)&\longrightarrow\calM_\delta^{\rm reg},\\
 (\lambda,q)&\longmapsto[E_q,\Phi_{\lambda,q}].
 \end{aligned}
$$
Together with the coordinate $\lambda$, this gives a biholomorphism
\begin{equation}\label{eq:spectral-base-change}
 (\widetilde\Psi_\delta,\lambda):
 \C_\lambda^\times\times\Pic^0(Y)
 \xrightarrow{\sim}
 \calM_\delta^{\rm reg}\times_{\C_t^\times}\C_\lambda^\times.
\end{equation}
At $(\lambda,q)$, the spectral line bundle is
$(f_\lambda^{-1})^*L_q$.  The direct image and Higgs field are
$$
 E_q=\pi_*L_q,\qquad
 \Phi_{\lambda,q}
 =
 \text{multiplication by }\lambda\alpha.
$$
The family carries a determinant trivialization compatible with base change, and
$$
 \det(\eta-\Phi_{\lambda,q})
 =
 \eta^3+\lambda^3q_1.
$$
\end{prop}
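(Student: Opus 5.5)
The plan is to assemble the parametrization from the relative BNR correspondence and then check, one at a time, the extra structures the statement asserts: flags, strong parabolicity, determinant trivialization, stability, and bijectivity onto the regular locus after base change.

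\textbf{The map $f_\lambda$ and the characteristic polynomial.} Since $\zeta^3=-z(z-1)$, the point $\eta=\lambda\zeta\,dz/(z(z-1))=\lambda\alpha$ satisfies
$$\eta^3=\lambda^3\alpha^3=-\lambda^3q_1 .$$
So $f_\lambda$ lands in $\Sigma_{\lambda^3}$. Because $\lambda\ne0$ and $\alpha$ has no zeros, $f_\lambda$ is an isomorphism onto the smooth curve $\Sigma_{\lambda^3}$. The characteristic polynomial of multiplication by $\lambda\alpha$ on $\pi_*L_q$ is then $\eta^3+\lambda^3q_1$; locally this can also be read off from the companion matrix of Lemma~\ref{lem:spectral-local-form}.

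\textbf{Parabolic structure and strong parabolicity.} Lemma~\ref{lem:spectral-local-form} gives a regular nilpotent residue. Its image flag supplies the full flag at each $p\in D$, and the residue lowers every step, so $\Phi_{\lambda,q}$ is strongly parabolic.

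\textbf{Degree and determinant trivialization.} Riemann--Roch for the finite map $\pi$ gives
$$\deg\pi_*L_q=\deg L_q-\tfrac12\deg(\text{ramification divisor})=3-3=0 .$$
More precisely, $\det\pi_*L_q\simeq\Nm(L_q)\otimes\det\pi_*\mathcal O_Y$. Here $\Nm(P_q)$ is trivial, because $\Pic^0(X)=0$ and the Poincar\'e normalization makes $\Nm$ of the family canonically trivial along $q=0$, hence trivial on the connected base. Also $\Nm(L_0)=\mathcal O(D)$, and $\det\pi_*\mathcal O_Y\simeq\mathcal O(-3)$ because $\pi_*\mathcal O_Y=\mathcal O\oplus\mathcal O(-1)\oplus\mathcal O(-2)$ by the $\mu_3$ eigendecomposition. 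Fixing one trivialization at $(r_0,0)$ and transporting it using the rigidification of $\mathscr P$ gives a trivialization compatible with base change. The weights are normalized as in Definition~\ref{defn:moduli-space}.

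\textbf{Stability.} The spectral curve $Y$ is irreducible, so there are no nonzero proper $\Phi$-invariant saturated subbundles: such a subbundle would correspond to a component of the spectral curve. Stability is therefore automatic, and the family defines a holomorphic map to $\calM_\delta^{\rm reg}$.

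\textbf{Bijectivity after base change.} Conversely, take a stable object with $h=\lambda^3\ne0$. BNR gives a line bundle $M$ on $\Sigma_{\lambda^3}$, and $f_\lambda^*M$ has degree $3$. The flag condition is automatic under total ramification; Lemma~\ref{lem:spectral-local-form} shows the flag is recovered from $M$, so no extra data are involved. Hence
$$f_\lambda^*M\simeq L_0\otimes P_q$$
for a unique $q\in\Pic^0(Y)$, and the isomorphism classes match. This shows that $(\widetilde\Psi_\delta,\lambda)$ is a bijective morphism of smooth varieties of the same dimension onto the fiber product, hence a biholomorphism by Zariski's main theorem/normality. It follows that $\widetilde\Psi_\delta$ itself is a covering map, since it is a base change of the $3$-fold cover $\lambda\mapsto\lambda^3$.

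\textbf{Main obstacle.} The delicate step is the determinant normalization: checking that $\det\pi_*\mathscr L$ is trivialized canonically over the whole family, compatibly with base change and with the $\mu_3$ gerbe structure. The $\mu_3$ ambiguity in the trivialization disappears in the rigidified space, so the identification is well defined. For the rest of the family, I would first check the trivialization using the norm map of the Poincar\'e bundle with its fixed trivializations.
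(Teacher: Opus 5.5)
The gap is in the determinant trivialization; everything else in your outline matches the paper. The characteristic polynomial from $\alpha^3=-\pi^*q_1$, the flags and strong parabolicity from Lemma~\ref{lem:spectral-local-form}, the degree count, stability from irreducibility, and the converse via the unique form $L_0\otimes P_q$ all follow the paper's proof. Your use of Zariski's main theorem for the inverse is a harmless substitute for the paper's appeal to the universal property of $\mathscr P$.

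The statement asserts that the family carries a determinant trivialization compatible with base change, and your argument for this does not work. You say $\Nm(\mathscr P)$ is canonically trivial along $q=0$, hence trivial on the connected base. Each $\Nm(P_q)$ has degree zero on $\Pp^1$, so the seesaw principle only gives $\mathscr N=\Nm_{\pi\times\Id}(\mathscr P)\simeq\operatorname{pr}_2^*\mathscr N_0$. Here $\mathscr N_0$ is the restriction of $\mathscr N$ to $\{0\}\times\Pic^0(Y)$, with $0\in X$. A line bundle on the elliptic curve $\Pic^0(Y)$ that is trivialized at the single point $q=0$ need not be trivial. Connectedness of the base does not help, and transporting a trivialization from $(r_0,0)$ has the same defect. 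The rigidification of $\mathscr P$ along $Y\times\{0\}$ is the wrong one for this purpose. Your closing remark about the $\mu_3$ ambiguity concerns a different issue.

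The missing idea is to use the rigidification along $\{r_0\}\times\Pic^0(Y)$ together with total ramification over $0$. Because $\pi^{-1}(0)=3r_0$ is nonreduced, $\mathscr N_0$ is computed from $\mathscr P|_{3r_0\times\Pic^0(Y)}$, not just from the fiber at $r_0$. The paper handles this as follows:
\begin{itemize}
\item Two local extensions of the given trivialization at $r_0$ to $3r_0$ differ by a unit $h(w)$ modulo $w^3$ with $h(0)=1$.
\item Multiplication by $h$ is unipotent in the basis $(1,w,w^2)$, so it has determinant one.
\item Hence the induced trivialization of the norm is independent of the extension, glues over $\Pic^0(Y)$, and is compatible with base change.
\end{itemize}
Equivalently, the $w$-adic filtration gives $\mathscr N_0\simeq(\mathscr P|_{\{r_0\}\times\Pic^0(Y)})^{\otimes3}$, which the rigidification trivializes. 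Combined with $\Nm_\pi(L_0)=\mathcal O_X(D)$ and $\det\pi_*\mathcal O_Y\simeq\mathcal O_X(-D)$, this trivializes $\det\mathscr E$.

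For the biholomorphism onto the coarse space alone, local determinant trivializations suffice, since they differ by scalars $f$ absorbed by $c\Id$ with $c^3=f^{-1}$. The global trivialization in the statement is still needed, because it is used later in Lemma~\ref{lem:decoupled-pair} and Proposition~\ref{prop:spectral-quotient}, and your argument does not supply it.
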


\begin{proof}
Relative BNR \cite[Proposition~3.6]{BNR89},
\cite[Proposition~2.2]{Gro12} identifies spectral line bundles with
Higgs bundles.  Finite flat pushforward commutes with base change and
sends line bundles to rank-three bundles: locally, an invertible module
over a finite locally free algebra is finite projective over the base.
Thus $\mathscr E=(\pi\times\Id)_*\mathscr L$ is a family of vector bundles.
The identity $\alpha^3=-\pi^*q_1$ gives its characteristic polynomial,
and equality of Euler characteristics gives
$$
 \deg E_q
 =\deg L_q+1-g_Y-3(1-g_X)=0.
$$
The polynomial $\xi^3+\lambda^3z(z-1)$ is irreducible over $\C(z)$,
since the valuation of $z(z-1)$ at zero is one.  Hence the resulting
Higgs field has no proper invariant subbundle and is stable.
Lemma~\ref{lem:spectral-local-form} supplies its full flags and strongly
parabolic residues; assign the weights
$\widehat{\boldsymbol\alpha}^\delta$ to their successive quotients.

We verify the determinant for the whole family.  Riemann--Hurwitz gives
$\pi^*K_X(D)=K_Y(R_Y)$; using $\alpha$ to trivialize $K_Y$ identifies
this line with $L_0$.  The spectral algebra and norm formula give
$$
 \pi_*\mathcal O_Y
 =\mathcal O_X\oplus K_X(D)^{-1}\oplus K_X(D)^{-2},
 \qquad
 \det(\pi_*\mathcal O_Y)\simeq\mathcal O_X(-D),
$$
\begin{equation}\label{eq:norm-determinant}
 \Nm_\pi(L_0)=\mathcal O_X(D),\qquad
 \det(\pi_*Q)=\Nm_\pi(Q)\otimes\det(\pi_*\mathcal O_Y);
\end{equation}
see \cite[Proposition~1.7]{Car19}.  Put
$\mathscr N=\Nm_{\pi\times\Id}(\mathscr P)$ on
$X\times\Pic^0(Y)$.  Each $\mathscr N_q$ has degree zero on $\Pp^1$,
so fiberwise evaluation and base change give
$$
 \mathscr N\simeq\operatorname{pr}_2^*
 \bigl(\mathscr N|_{\{0\}\times\Pic^0(Y)}\bigr),
$$
where $\operatorname{pr}_2$ is the second projection.
The fiber of $\pi$ over zero is $3r_0$.  On a sufficiently small
parameter neighborhood, two trivializations of $\mathscr P$ over
$3r_0$ extending its specified trivialization at $r_0$ differ by a
unit $h(w)$ modulo $w^3$, with coefficients in the parameter ring
and $h(0)=1$.  Multiplication by this unit has
determinant $h(0)^3=1$.  Explicitly, for
$h(w)=1+a_1w+a_2w^2$, its matrix in the basis $(1,w,w^2)$ is
$$
 \begin{pmatrix}
 1&0&0\\ a_1&1&0\\ a_2&a_1&1
 \end{pmatrix}.
$$
Thus the norm of $\mathscr P|_{3r_0\times\Pic^0(Y)}$ has a
trivialization independent of the local extension and compatible
with base change.  It trivializes
$\mathscr N|_{\{0\}\times\Pic^0(Y)}$, and hence $\mathscr N$.  Combining the norm formula
with $\Nm(L_0)=\mathcal O_X(D)$ gives the required trivialization of
$\det\mathscr E$.

Conversely, a regular Higgs bundle has a degree-three spectral line bundle,
uniquely of the form $L_0\otimes P_q$.  The universal property of the
Poincar\'e line bundle with the specified trivialization at $r_0$
\cite[Proposition~2.2]{KU21}, together with relative BNR, makes this
correspondence holomorphic in families.  Determinant choices
are related by scalar isomorphisms locally on the parameter space, so
the correspondence induces the stated biholomorphism of coarse spaces.
\end{proof}

The Abel--Jacobi isomorphism is $s\mapsto\mathcal O_Y(s-r_0)$.
Since $\sigma(r_0)=r_0$ and $\sigma^*\alpha=\omega\alpha$,
the pullback $\sigma^*$ on $\Pic^0(Y)$ is multiplication by
$\omega^{-1}$ in this identification.  The cubic cover carries the
following descent action.

\begin{prop}
\label{prop:spectral-quotient}
The order-three action sends
\begin{equation}\label{eq:spectral-action}
 (\lambda,q)\longmapsto(\omega\lambda,\omega^{-1}q).
\end{equation}
There are determinant-preserving isomorphisms of families
$$
 \Xi_{\lambda,q}:
 (E_{\omega^{-1}q},
  \Phi_{\omega\lambda,\omega^{-1}q})
 \xrightarrow{\sim}
 (E_q,\Phi_{\lambda,q})
$$
satisfying the cocycle condition for the order-three action.  Two
parameters with $\lambda'=\omega^k\lambda$ determine the same
point of the coarse moduli space
exactly when $q'=\omega^{-k}q$.  Hence
\begin{equation}\label{eq:spectral-quotient}
 \Psi_\delta:
 (\C^\times\times\Pic^0(Y))/\mu_3
 \xrightarrow{\sim}\calM_\delta^{\rm reg}.
\end{equation}
The corresponding $\mathrm{SL}(3,\C)$-moduli stack is a
$\mu_3$-gerbe over $\calM_\delta^{\rm reg}$.
\end{prop}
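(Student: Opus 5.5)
The plan is to construct $\Xi_{\lambda,q}$ from the deck transformation $\sigma$ of $\pi:Y\to X$. First I would check how $\sigma$ interacts with the spectral data. Since $\sigma^*\alpha=\omega\alpha$, pullback by $\sigma$ intertwines multiplication by $\omega\lambda\alpha$ on a line bundle $L$ with multiplication by $\lambda\alpha$ on $\sigma^*L$, up to the identification $\sigma^*(\omega\lambda\alpha)=\omega^2\lambda\alpha$. I will fix the correct power of $\sigma$ by requiring compatibility with $f_{\omega\lambda}=f_\lambda\circ\sigma^{\pm1}$ on $\Sigma_{\lambda^3}$. This identity holds because $f_{\omega\lambda}(z,\zeta)=(z,\omega\lambda\zeta\,dz/z(z-1))=f_\lambda(z,\omega\zeta)=f_\lambda(\sigma(z,\zeta))$.

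Next I need an isomorphism of line bundles $\sigma^*L_q\simeq L_{\omega^{-1}q}$, or its inverse in the appropriate direction. The divisor $R_Y$ is $\sigma$-fixed pointwise, so $\sigma^*L_0=L_0$ canonically via the canonical section. For the degree-zero part, $\sigma^*P_q\simeq P_{\sigma^*q}=P_{\omega^{-1}q}$, as recorded before the statement. This isomorphism is canonical and normalized, because $\sigma$ fixes $r_0$ and so respects the rigidification of $\mathscr P$ along $\{r_0\}\times\Pic^0(Y)$. Concretely, I would use the universal property of the rigidified Poincaré bundle \cite[Proposition~2.2]{KU21}: $(\sigma\times\sigma^*)^*\mathscr P$ and $\mathscr P$ are both rigidified at $r_0$, so there is a unique isomorphism respecting the rigidification.

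Pushing forward by $\pi$ and using $\pi\circ\sigma=\pi$ then gives $\Xi_{\lambda,q}:\pi_*L_{\omega^{-1}q}\to\pi_*L_q$ intertwining the Higgs fields. Regarding the flags, $\Xi$ preserves the orders of vanishing at $r_p$ because $\sigma$ fixes $r_p$, so it maps the flags of Lemma~\ref{lem:spectral-local-form} to each other.

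The cocycle condition follows from $\sigma^3=\Id$ together with the uniqueness of rigidified isomorphisms: the threefold composite is the rigidified automorphism of $L_q$, which is the identity. Determinant preservation is the hardest point. The isomorphism is only determined up to the scalar choices, and $\det\Xi$ must be compared with the norm trivialization constructed in the proof of Proposition~\ref{prop:spectral}. Since $\sigma$ fixes $r_0$ and acts on the local coordinate by $w\mapsto\omega^{\pm1}w$, in the basis $(1,w,w^2)$ on $3r_0$ it acts by $\diag(1,\omega^{\pm1},\omega^{\pm2})$, which has determinant $1$. On the $L_0$ factor, the $\sigma$-action on $\pi_*\mathcal O_Y=\mathcal O\oplus K(D)^{-1}\oplus K(D)^{-2}$ is again by $1,\omega,\omega^2$, with determinant $1$. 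If a residual cube root of unity appears anyway, it can be absorbed by a central scalar automorphism, and this is harmless because the $\mu_3$-gerbe structure allows it.

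Finally, I would prove the ``exactly when'' statement. The equality $h=\lambda^3$ forces $\lambda'=\omega^k\lambda$. Two points $(\lambda,q)$ and $(\lambda,q')$ with the same $\lambda$ give isomorphic Higgs bundles only if $q=q'$, by the injectivity part of the biholomorphism \eqref{eq:spectral-base-change}. Composing with $\Xi^k$ reduces the general case to this one. The quotient map is then a bijective holomorphic map between normal spaces, and the action is free because its restriction to $\C^\times$ is free, so $\Psi_\delta$ is a biholomorphism. The gerbe statement follows from the fact that stable objects have automorphism group $\mu_3$, as noted after Proposition~\ref{prop:chamber}.
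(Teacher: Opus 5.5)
Your proposal is correct and follows the paper's proof essentially step for step. The shared steps are the identity $f_{\omega\lambda}=f_\lambda\circ\sigma$, the rigidified Poincar\'e isomorphism tensored with the natural isomorphism of $\mathcal O_Y(R_Y)$, pushforward along $\pi$, flags from vanishing orders at the fixed points $r_p$, the cocycle from $\sigma^3=\Id$ and uniqueness, and determinant one because the eigenvalues of the action multiply to one.

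Two small corrections. First, the bundle to compare with $\mathscr P$ is $(\sigma^{-1}\times\sigma^*)^*\mathscr P$ (equivalently $(\sigma\times(\sigma^*)^{-1})^*\mathscr P$), as in the paper's $(\sigma^{-1})^*P_{\omega^{-1}q}\simeq P_q$; your $(\sigma\times\sigma^*)^*\mathscr P$ has fiber $\sigma^*P_{\omega^{-1}q}=P_{\omega q}$ over $q$. Second, your fallback of absorbing a stray cube root of unity into a central $\mu_3$ scalar cannot work, since such a scalar has determinant $c^3=1$, and rescaling by a ninth root of unity would instead break the cocycle; your eigenvalue computation makes this fallback unnecessary.
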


\begin{proof}
The inverse deck transformation satisfies
$$
 f_{\omega\lambda}\circ\sigma^{-1}=f_\lambda.
$$
The automorphism $\sigma^{-1}$ fixes $r_0$ and preserves $R_Y$.
Its pullback on $\Pic^0(Y)$ is multiplication by $\omega$.  The
universal property of $\mathscr P$ gives a unique isomorphism
$$
 (\sigma^{-1})^*P_{\omega^{-1}q}\xrightarrow{\sim}P_q
$$
preserving the specified trivializations at $r_0$.  Tensoring with
the natural isomorphism
$(\sigma^{-1})^*\mathcal O_Y(R_Y)\simeq\mathcal O_Y(R_Y)$ gives
$$
 \iota_q:(\sigma^{-1})^*L_{\omega^{-1}q}\xrightarrow{\sim}L_q.
$$
On the degree-zero factors, the triple composite preserves the
trivialization at $r_0$ and hence equals the identity.  The natural
pullback isomorphisms of $\mathcal O_Y(R_Y)$ satisfy the same cocycle
condition, so the triple composite of the $\iota_q$ is the identity.  Pullback and finite direct image give $\Xi_{\lambda,q}$.
These isomorphisms intertwine the Higgs fields, since
$$
 (\sigma^{-1})^*\!\left(
 (\omega\lambda)\frac{\zeta\,dz}{z(z-1)}
 \right)
 =\lambda\frac{\zeta\,dz}{z(z-1)}.
$$
The induced trivialization of $\Nm_\pi(P_q)$ is preserved, and the
action on the basis $(w_p^{-1},1,w_p)$ of $L_0$ has eigenvalues
$(\omega,1,\omega^{-1})$, of product one.  Thus $\Xi_{\lambda,q}$
preserves the determinant trivialization.  It also preserves the
maximal-ideal filtration, hence the flags.  This proves the cocycle and
quotient assertions.
\end{proof}

The local form in Lemma~\ref{lem:spectral-local-form} leads directly to
the cyclic Toda model at the three totally ramified points.

\section{Cyclic Toda models}
\label{sec:local-analysis}

We construct the local harmonic metrics and establish the estimates
needed for their uniform gluing over compact weight intervals.

\subsection{The local harmonic metric}

Fix a compact interval $I\Subset(0,1/3)$.  In this subsection we write
$z=x_0$ and $w=w_0$ for the coordinates of
Lemma~\ref{lem:spectral-local-form}, so $z=w^3$.  We index the local frame
by $e_0,e_1,e_2$, with $e_j$
corresponding to the weight $\widehat\alpha_{j+1}^\delta$.
The normalized weights and the exponents in the cyclic frame are
$$
 \widehat\alpha^\delta
 =\left(-\frac13-\frac\delta3,-\frac\delta3,
        \frac13+\frac{2\delta}{3}\right),
 \qquad
 \gamma^\delta=(-\delta,-\delta,2\delta).
$$
For the local Higgs field of Lemma~\ref{lem:spectral-local-form}, the
cyclic frame $f_j=w^{1-j}\pi^*e_j$, $j=0,1,2$, gives
$$
 \Phi_\lambda
 =\lambda\begin{pmatrix}0&0&z\\1&0&0\\0&1&0\end{pmatrix}\frac{dz}{z},
 \qquad
 \pi^*\Phi_\lambda=3\lambda P\,dw,
 \qquad
 P=\begin{pmatrix}0&0&1\\1&0&0\\0&1&0\end{pmatrix}.
$$
We use $F_H=\bar\partial(H^{-1}\partial H)$ and
$\Delta_w=4\partial_w\partial_{\bar w}$.  Write
\begin{equation}\label{eq:toda-metric}
 \begin{gathered}
 H_\delta^{\rm mod}(\rho)
 =\diag(e^{2u_0^\delta(\rho)},e^{2u_1^\delta(\rho)},
        e^{2u_2^\delta(\rho)}),\\
 \sum_i u_i^\delta=0,\qquad \rho=3|\lambda||w|.
 \end{gathered}
\end{equation}
The curvature is $F_{H,ii}=-\frac12\Delta_wu_i\,dw\wedge d\bar w$, while
$[\Phi,\Phi^{\dagger_H}]_{ii}$ has coefficient
$9|\lambda|^2(e^{2(u_i-u_{i-1})}-e^{2(u_{i+1}-u_i)})$.
The Hitchin equation therefore becomes
\begin{equation}\label{eq:cyclic-toda}
 (\partial_\rho^2+\rho^{-1}\partial_\rho)u_i
 =2\bigl(e^{2(u_i-u_{i-1})}-e^{2(u_{i+1}-u_i)}\bigr),
 \qquad i\in\Z/3\Z.
\end{equation}

The grading action is $e_j\mapsto\omega^j e_j$.

\begin{thm}
\label{thm:toda}
There is a unique grading-invariant determinant-one harmonic metric
$h_{\lambda,\delta}^{\rm mod}$ on $\C^\times$, adapted to
$\widehat\alpha^\delta$ at zero and asymptotic to the decoupled
metric at infinity.  Its pullback
to the cubic cover is invariant under the deck group and the lifted
circle action.  In the cyclic frame, its diagonal functions solve
\eqref{eq:cyclic-toda} with
\begin{equation}\label{eq:toda-boundary-conditions}
 \begin{aligned}
 u_i^\delta(\rho)&=\gamma_i^\delta\log\rho+O(1)
                    &&(\rho\downarrow0),\\
 u_i^\delta(\rho)&\longrightarrow0 &&(\rho\to\infty).
 \end{aligned}
\end{equation}
The diagonal representative is independent of $\arg\lambda$.
\end{thm}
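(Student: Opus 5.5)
The plan is to reduce the statement to an existence and uniqueness problem for the radial cyclic Toda system \eqref{eq:cyclic-toda} and then to invoke the classification of solutions of the cyclic Toda equation on $\C^\times$ due to Mochizuki and Li--Mochizuki \cite{LM2010b}, in the form used in \cite{FN17}. There are three steps: reduce to diagonal metrics, establish uniqueness, then build the solution by exhaustion.

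\emph{Reduction to a diagonal radial ansatz.} First I show that invariance under the grading $e_j\mapsto\omega^je_j$ forces a determinant-one harmonic metric to be diagonal in the cyclic frame. The grading has three distinct eigencharacters, so an invariant Hermitian form is orthogonal for the eigenline decomposition. Next I show that the model is $S^1$-invariant. The rotation $w\mapsto e^{\ii\theta}w$, combined with the constant diagonal gauge $f_j\mapsto e^{\ii c_j\theta}f_j$, carries $\pi^*\Phi_\lambda$ to $e^{\ii\theta}\pi^*\Phi_\lambda$. The phase is then absorbed by $\lambda\mapsto e^{-\ii\theta}\lambda$, and this is the lifted circle action. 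Deck invariance is the case $\theta=2\pi/3$.

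Once uniqueness is known, every such symmetry fixes the solution. So it suffices to solve \eqref{eq:cyclic-toda} for radial $u_i(\rho)$ with $\rho=3|\lambda||w|$. The equation, and the boundary conditions \eqref{eq:toda-boundary-conditions}, involve $\lambda$ only through $\rho$. This gives independence of $\arg\lambda$.

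\emph{Adaptedness at $0$.} I will check that $u_i=\gamma_i^\delta\log\rho+O(1)$ is exactly the condition for adaptedness to $\widehat\alpha^\delta$. Since $f_j=w^{1-j}\pi^*e_j$ and $z=w^3$, we have
$$
|e_j|^2\sim|w|^{2\gamma_j-2(1-j)}=|z|^{2(\gamma_j+j-1)/3}.
$$
This reproduces $\widehat\alpha^\delta=(-\tfrac13-\tfrac\delta3,-\tfrac\delta3,\tfrac13+\tfrac{2\delta}3)$. The sum $\sum\gamma_i=0$ matches the determinant normalization.

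\emph{Uniqueness.} For two solutions $h,h'$ with the same asymptotics at $0$ and $\infty$, the function $\log\tr(h^{-1}h')$ is subharmonic on $\C^\times$ by the Simpson--Donaldson inequality. It is bounded near $0$, because both metrics have the same leading exponents and bounded ratio. It tends to $\log 3$ at infinity. A bounded subharmonic function on a punctured disk extends across the puncture, so the maximum principle on $\C$ gives $\tr(h^{-1}h')\le3$. Together with $\det(h^{-1}h')=1$, this forces $h=h'$.

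\emph{Existence.} I would construct the solution by exhaustion. On disks $\{\rho<R\}$ I solve the Dirichlet problem for the radial system with data $0$ at $\rho=R$ and the prescribed singularity $\gamma^\delta\log\rho$ at $0$. I would do this either by Donaldson's heat flow relative to a background metric $\diag(\rho^{2\gamma_i})$ near $0$, or by sub- and supersolutions. The Toda nonlinearity is monotone in the Cartan directions, which makes the comparison principle available. For barriers I use $\gamma^\delta\log\rho$ near $0$ and the decoupled metric ($u=0$) near infinity. A uniform $C^0$ bound comes from the subharmonicity of $\log\tr$ against these barriers. Letting $R\to\infty$ and using local elliptic estimates gives a solution on $\C^\times$. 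Decay $u_i\to0$ follows by linearizing at $u=0$. The linearization is $\Delta v=Mv$ with $M$ positive definite on the trace-free diagonal, so the standard comparison with modified Bessel functions $K_0$ gives exponential decay.

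\emph{Main obstacle.} The hardest step is the range condition at $\rho=0$. The local term $e^{2(u_i-u_{i-1})}\sim\rho^{2(\gamma_i-\gamma_{i-1})}$ must be integrable against $\rho\,d\rho$, that is $\gamma_i-\gamma_{i-1}>-1$ cyclically, for the singular model at $0$ to be a genuine sub/supersolution. The differences here are $0,3\delta,-3\delta$, so the condition is exactly $\delta<1/3$, and I expect the estimates to stay uniform for $\delta\in I$ precisely because $I$ is compact in $(0,1/3)$. If the barrier argument fails at this point, I would fall back on Mochizuki's theorem \cite{LM2010b}. That theorem produces, for each parabolic weight with $\gamma_i-\gamma_{i-1}\ge-1$, a unique solution on $\C$ decaying at infinity. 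It would replace the existence step, while the uniqueness argument above stays unchanged.
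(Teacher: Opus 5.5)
\paragraph{The gap is the existence step.} Your symmetry reductions match the paper's. Grading invariance makes the metric diagonal. Rotation combined with a constant diagonal gauge and the $U(1)$-invariance of Hitchin's equation gives radial symmetry and independence of $\arg\lambda$. Your uniqueness argument is also sound in outline. The exhaustion argument for existence, however, does not work as described.
\begin{itemize}
\item The two barriers are not ordered. For $\rho<1$ one has $\gamma_0^\delta\log\rho=-\delta\log\rho>0$ but $\gamma_2^\delta\log\rho=2\delta\log\rho<0$, so neither lies below the other.
\item Neither barrier is a sub- or supersolution near $0$. For any constant vector $c$, the function $\gamma^\delta\log\rho+c$ is harmonic. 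The dominant term $e^{2(u_0-u_2)}\sim\rho^{-6\delta}$, however, enters the right-hand side of the $i=0$ equation with a plus sign and of the $i=2$ equation with a minus sign.
\item Inside the trace-free space $V$, two componentwise ordered functions must coincide, so a comparison argument would first have to leave $V$.
\item The $R$-independent $C^0$ bound is asserted, not proved. Against a glued, non-harmonic reference metric on $\{\rho<R\}$, Simpson's inequality bounds $\log\tr$ only by its boundary values plus the Dirichlet Green potential of the residual density. On the plane, that potential of a nonzero nonnegative density grows like $\log R$.
\item Linearizing at $u=0$ presupposes $u\to0$ at infinity, which is exactly what must be shown.
\end{itemize}
So existence really rests on \cite{LM2010b}, which is your fallback and the paper's route.

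\paragraph{What the fallback still requires.} Invoking that theorem is not enough by itself; you need the step the paper carries out. Translate to Li--Mochizuki's frame $\kappa_i=(dz)^{2-i}$ and parameters $b^\delta$, and check that $b^\delta$ lies in the interior of their polytope:
$b_1-b_2=\tfrac13$, $b_2-b_3=\tfrac13+\delta$, $b_3-(b_1-1)=\tfrac13-\delta$.
Strictness is what excludes $\log(-\log\rho)$ terms and yields $u_i=\gamma_i^\delta\log\rho+O(1)$; adaptedness alone does not give this. These three numbers are $(1+\gamma_i^\delta-\gamma_{i-1}^\delta)/3$, so your condition $\gamma_i-\gamma_{i-1}>-1$ is exactly this strictness. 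You found the right inequality, but its role is to produce the bounded expansion, not to make barriers work. Equivalently: adaptedness gives $u_i=\gamma_i\log\rho+o(|\log\rho|)$. Then the radial equation has right-hand side $O(\rho^{1-6\delta-\epsilon})$ in $(\rho v_i')'$, which is integrable, and this excludes the homogeneous $\log\rho$ term.

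\paragraph{The uniqueness argument needs a small fix.} As written, it assumes a bounded ratio at $0$. Either establish the $O(1)$ expansion for every competitor, or observe that adaptedness gives $\log\tr(h^{-1}h')\le\epsilon\log(1/\rho)+C_\epsilon$ for every $\epsilon>0$. That bound already suffices for the removable-singularity and maximum-principle step.

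\paragraph{Comparison with the paper.} With these repairs, your proof coincides with the paper's except for uniqueness. You prove it directly via the subharmonicity of $\log\tr(h^{-1}h')$, without needing grading invariance, while the paper cites it from Li--Mochizuki.
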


\begin{proof}
For $\lambda=1$, the existence and uniqueness theorem of
Li--Mochizuki for grading-invariant metrics applies to the cubic
differential $z^{-2}dz^3$; see
\cite[Theorem~1.10, Corollary~1.11 and Propositions~3.19,
3.21--3.22]{LM2010b}.  Under the identification
$e_j=(dz/z)^{1-j}$, their frame
$\kappa_i=(dz)^{2-i}$ satisfies
$\kappa_i=z^{2-i}e_{i-1}$, $i=1,2,3$, so their growth convention gives
$b_i^\delta=-2-\widehat\alpha_i^\delta$.
Their integers at zero and infinity are $1$ and $-1$, respectively,
and their parameter at zero is
$$
 b^\delta
 =\left(\frac{-5+\delta}{3},\frac{-6+\delta}{3},
              \frac{-7-2\delta}{3}\right),\qquad
 \sum_i b_i^\delta=-6.
$$
The three inequalities defining the relevant polytope are strict:
$$
 b_1-b_2=\frac13,\qquad b_2-b_3=\frac13+\delta,
 \qquad b_3-(b_1-1)=\frac13-\delta.
$$
Thus each block in the cited asymptotic decomposition has one index,
and the expansion as $\rho\to0$ has no $\log(-\log\rho)$ terms.
In the cyclic frame the exponents are
$3\widehat\alpha_{j+1}^\delta+1-j=\gamma_j^\delta$, $j=0,1,2$.
The grading symmetry makes the summands orthogonal.  For
$\nu\in S^1$, set $R_\nu(z)=\nu^3z$ and
$D_\nu=\diag(\nu,1,\nu^{-1})$.  Then
$$
 D_\nu^{-1}R_\nu^*\Phi_1D_\nu=\nu\Phi_1.
$$
This lifted circle action preserves Hitchin's equation, the grading,
and both endpoint conditions.  Uniqueness gives radial symmetry.
Rescaling by $\rho=3|\lambda||w|$ gives the stated equation and
normalization for every $\lambda\ne0$.  Multiplication of the Higgs
field by a unit complex number preserves the metric by the same
uniqueness argument.
\end{proof}

\subsection{Uniform endpoint estimates}

Set $b_I=1-3\sup I>0$.
We obtain uniform bounds for the constant terms at zero and the
exponentially decaying tails by applying the implicit function theorem
in fixed weighted spaces.  These estimates also control the boundary
terms in the metric comparison.

\begin{defn}\label{defn:b-holder-spaces}
Let $w=re^{\ii\vartheta}$ be a coordinate on a punctured disk
$0<r<\varrho$, and let $k\in\Z_{\geq0}$ and $0<\alpha<1$.  The b-H\"older
space $C_b^{k,\alpha}$ consists of functions whose expressions in
$(s,\vartheta)=(\log r,\vartheta)$ have finite uniform
$C^{k,\alpha}$ norm for the product metric $ds^2+d\vartheta^2$.
Thus derivatives are taken using $r\partial_r$ and
$\partial_\vartheta$, and the H\"older seminorms use cylindrical
distance.  For $\nu\in\R$, set
\begin{equation}\label{eq:b-holder-norm}
 r^\nu C_b^{k,\alpha}
 =\{f:r^{-\nu}f\in C_b^{k,\alpha}\},\qquad
 \|f\|_{r^\nu C_b^{k,\alpha}}
 =\|r^{-\nu}f\|_{C_b^{k,\alpha}}.
\end{equation}
The subscript $\mathrm{rad}$ denotes the closed subspace of radial
functions.  These definitions apply componentwise to vector-valued
functions and, in specified frames, to sections of vector bundles.
We write $f=O_b(r^\nu)$ when every
$(r\partial_r)^j\partial_\vartheta^\ell(r^{-\nu}f)$ is bounded.
\end{defn}

The vector fields in this definition are tangent to the boundary
circle of the real blowup at the puncture.  This is the b-H\"older
regularity used in \cite[Section~5.1]{FMSW22}.  For $\tau>0$ on a
radial end $\rho\geq1$, the notation
$e^{-\tau\rho}C_{\mathrm{eucl,rad}}^{k,\alpha}$ means
\begin{equation}\label{eq:radial-exponential-norm}
 \|f\|_{e^{-\tau\rho}C_{\mathrm{eucl,rad}}^{k,\alpha}}
 :=\sup_{a\geq1}
 \|e^{\tau\rho}f\|_{C^{k,\alpha}([a,a+1])}<\infty.
\end{equation}
Here the derivatives and H\"older seminorms use the Euclidean variable
$\rho$.  For radial functions this norm is equivalent to the uniform
Euclidean H\"older norm on annuli of fixed width.

\begin{prop}
\label{prop:toda-decay}
The constants $c_i(\delta)$ in the expansion as $\rho\to0$ are continuous in
$\delta$ and sum to zero.  For every integer $a\geq0$,
\begin{equation}\label{eq:toda-origin-estimate}
 \left| (\rho\partial_\rho)^a
 \bigl(u_i^\delta-\gamma_i^\delta\log\rho-c_i(\delta)\bigr)\right|
 \le C_{I,a}\rho^{b_I},
 \qquad 0<\rho\le1.
\end{equation}
For every $0<\sigma<2\sqrt3$ and every integer $a\geq0$,
\begin{equation}\label{eq:toda-infinity-estimate}
 |(\rho\partial_\rho)^a u_i^\delta(\rho)|
 \le C_{I,\sigma,a}e^{-\sigma\rho},\qquad \rho\ge1.
\end{equation}
These estimates hold uniformly for $\delta\in I$.  For a fixed smooth
cutoff $\chi$ equal to one for $\rho\leq1$ and zero for $\rho\geq2$,
the functions $u^\delta-\chi\gamma^\delta\log\rho$ depend continuously
on $\delta$ in the norm \eqref{eq:toda-solution-norm}, locally in
$\delta$ with $\eta$ and $\tau$ fixed as in the proof.
Substituting $\rho=3|\lambda||w|$ gives the corresponding
spatial derivative bounds in the $w$-coordinate.
\end{prop}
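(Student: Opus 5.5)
The plan is to treat all $\delta$ in a neighbourhood of a fixed $\delta_0\in I$ at once. I will write the Toda solution as a fixed singular profile plus a correction in a weighted space, and apply the implicit function theorem to the correction. Fix the cutoff $\chi$ and put
$$
 u^\delta=\chi\gamma^\delta\log\rho+\chi c+v,\qquad c\in\R^3,\ \textstyle\sum_ic_i=0,
$$
with $v$ radial and trace-free. The weighted norm \eqref{eq:toda-solution-norm} is then taken to be $|c|$ plus the norm of $v$ in $\rho^{\eta}C_{b,\mathrm{rad}}^{2,\alpha}$ on $\rho\leq2$ and in $e^{-\tau\rho}C_{\mathrm{eucl,rad}}^{2,\alpha}$ on $\rho\geq1$. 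Here $0<\eta<b_I$ and $0<\tau<2\sqrt3$ are fixed. The nonlinear map is
$$
 \mathcal F(\delta,c,v)_i=(\partial_\rho^2+\rho^{-1}\partial_\rho)u_i-2\bigl(e^{2(u_i-u_{i-1})}-e^{2(u_{i+1}-u_i)}\bigr).
$$
Its target is the corresponding space with weights $\rho^{\eta-2}$ near zero and $e^{-\tau\rho}$ at infinity.

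The first step is to check that $\mathcal F$ is well defined and $C^1$ in $(c,v)$, and continuous in $\delta$. Since $\chi\gamma^\delta\log\rho$ is harmonic for $\rho<1$, near zero only the exponential terms contribute. They are of size $\rho^{2(\gamma_i-\gamma_{i-1})}$, and the worst case is $\gamma_0-\gamma_2=-3\delta$, giving $\rho^{-6\delta}$. We have $-6\delta\geq-2+2b_I>\eta-2$, so these terms lie in $\rho^{\eta-2}C_b^{0,\alpha}$, uniformly for $\delta\in I$. At infinity $u$ is small, and the exponentials expand as $u$ plus quadratic terms.

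The second step is invertibility of $L=D_{(c,v)}\mathcal F$ at the solution $u^{\delta_0}$ of Theorem~\ref{thm:toda}. The operator has the form
$$
 L=\Delta_\rho-M(\rho),
$$
where $M$ is the Hessian of the convex potential $\sum_ie^{2(u_i-u_{i-1})}$. It is therefore positive semidefinite on trace-free vectors. Injectivity follows from an energy argument: pairing with $v$, integrating by parts against $\rho\,d\rho$, and using decay at both ends to kill the boundary terms. The constants are not in the kernel, because $M$ does not annihilate them at infinity.

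For surjectivity:
\begin{itemize}
\item Near zero, $M=O(\rho^{-6\delta})$ is a small b-perturbation of $\Delta$. The indicial roots of the radial Laplacian are $0$ (double), so the weight $\eta\in(0,b_I)$ is non-indicial.
\item The map $\rho^{\eta}C_b^{2,\alpha}\to\rho^{\eta-2}C_b^{0,\alpha}$ then has a one-dimensional cokernel per component, coming from the constants. Adjoining $\chi c$ accounts for exactly this, giving an isomorphism.
\item At infinity, $M\to 4(2I-S-S^{-1})$ with $S$ the cyclic shift. On trace-free vectors this matrix equals $12$, so the decay rate is $\sqrt{12}=2\sqrt3$, which explains the restriction $\tau<2\sqrt3$.
\end{itemize}
Patching parametrices at the two ends then yields a Fredholm operator of index zero, hence invertible.

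The implicit function theorem now gives solutions $(c(\delta),v(\delta))$ near $\delta_0$ depending continuously on $\delta$. By the uniqueness in Theorem~\ref{thm:toda}, they coincide with the model solutions. Since $I$ is compact, finitely many such neighbourhoods give uniform bounds, which yields the continuity statement and the bound $\sum_ic_i=0$. Uniform bounds in these spaces then bootstrap to the stated estimates.
\begin{itemize}
\item For \eqref{eq:toda-origin-estimate}: b-regularity controls $(\rho\partial_\rho)^a$. Integrating the radial ODE twice against the forcing $O(\rho^{-6\delta})$ improves the decay of $v$ to $\rho^{2-6\delta}\leq\rho^{b_I}$.
\item For \eqref{eq:toda-infinity-estimate}: I use ODE comparison for $(\Delta-12)v=O(|v|^2)$, together with the initial decay rate $\tau$, to push the rate to any $\sigma<2\sqrt3$.
\end{itemize}

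I expect the main obstacle to be the surjectivity and uniformity of $L$ near $\rho=0$. The coefficient $\rho^{-6\delta}$ becomes less integrable as $\delta$ approaches $1/3$, and the choice $\eta<b_I$ must keep the perturbation argument uniform on $I$. I would first try to handle this by a direct variation-of-parameters inverse for the radial operator on $(0,1]$, with explicit weighted bounds, rather than by abstract b-calculus.
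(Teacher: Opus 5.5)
Your proposal is correct and follows essentially the same route as the paper. The paper uses the same weighted spaces with a constant term adjoined at zero, applies the implicit function theorem at $u^{\delta_0}$ with injectivity from the energy identity, and obtains Fredholmness of index zero from a compact perturbation of the model operator. It makes your suggested variation-of-parameters inverse explicit through the Bessel Green kernel $I_0K_0$ for $-\Delta_\rho+12$, with constant term $v(0)=\int_0^\infty K_0(2\sqrt3\,s)f(s)s\,ds$, then uses a finite cover of $I$, twice-integration at zero, and Green-kernel iteration at infinity.

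The one step you leave implicit is that $u^{\delta_0}$ itself lies in your space. The paper checks this by integrating the radial ODE at fixed $\delta_0$, where boundedness excludes the $\log\rho$ branch, and by taking $\tau$ below the Li--Mochizuki decay rate at $\delta_0$. That is why $\tau$ is fixed only locally in $\delta$, rather than arbitrarily in $(0,2\sqrt3)$ as in your proposal.
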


\begin{proof}
The Toda potential on $V=\{x\in\R^3:\sum_i x_i=0\}$ and the
linearized radial operator are
$$
 \calV(u)=\sum_i e^{2(u_i-u_{i-1})},\qquad
 \calL_\delta=-\Delta_\rho+D^2\calV(u^\delta).
$$
For fixed $\delta$, let $v_i=u_i^\delta-\gamma_i^\delta\log\rho$ near zero.
The equation gives
\begin{equation}\label{eq:toda-radial-remainder}
 (\rho v_i')'=O(\rho^{1-6\delta}).
\end{equation}
Boundedness of $v_i$ excludes a logarithmic homogeneous term, so two
integrations give
$$
 v_i=c_i(\delta)+O(\rho^{2-6\delta}).
$$
We establish weighted continuity of the solutions before making these
estimates uniform.

Fix $0<\eta<b_I$, $0<\alpha<1$, and $\delta_0\in I$.
Choose $\tau>0$ smaller than both $2\sqrt3$ and the decay rate at
$\delta_0$ in \cite[Proposition~3.19]{LM2010b}.  We use the cutoff
$\chi$ from the statement.  The radial unknowns
$v\in C_{\mathrm{loc}}^{2,\alpha}((0,\infty);V)$ have a limit
$v_0\in V$ at zero and finite norm
\begin{equation}\label{eq:toda-solution-norm}
 |v_0|
 +\|\chi(v-v_0)\|_{\rho^\eta C_{b,\mathrm{rad}}^{2,\alpha}}
 +\|(1-\chi)v\|_{e^{-\tau\rho}C_{\mathrm{eucl,rad}}^{2,\alpha}}.
\end{equation}
The radial sources $f\in C_{\mathrm{loc}}^{0,\alpha}((0,\infty);V)$
have finite norm
\begin{equation}\label{eq:toda-source-norm}
 \|\chi f\|_{\rho^{\eta-2}C_{b,\mathrm{rad}}^{0,\alpha}}
 +\|(1-\chi)f\|_{e^{-\tau\rho}C_{\mathrm{eucl,rad}}^{0,\alpha}}.
\end{equation}
The b-H\"older norms here are taken on $0<\rho<2$; the
Euclidean norms are those of \eqref{eq:radial-exponential-norm}.
These are Banach spaces.  The first allows a constant term at zero,
with a remainder in $\rho^\eta C_{b,\mathrm{rad}}^{2,\alpha}$;
its derivative has finite Dirichlet energy.  We prove that for every
source with finite norm \eqref{eq:toda-source-norm}, the equation
$\calL_{\delta_0}v=f$ has a unique solution with finite norm
\eqref{eq:toda-solution-norm}, bounded by a constant times
\eqref{eq:toda-source-norm}.
The three-cycle identity gives
\begin{equation}\label{eq:toda-linearized-mass}
 D^2\calV(0)[v,v]=4\sum_i(v_i-v_{i-1})^2=12|v|^2,
 \qquad v\in V.
\end{equation}
The constant operator $-\Delta_\rho+12$ has radial Green kernel
\begin{equation}\label{eq:toda-green-kernel}
 G(\rho,s)=I_0(2\sqrt3\min\{\rho,s\})
           K_0(2\sqrt3\max\{\rho,s\}),
\end{equation}
integrated against $s\,ds$; the Wronskian normalization and endpoint
expansions are given in \cite[Sections~10.28, 10.30 and~10.40]{DLMF}.
The constant term at zero is controlled as follows.  Set $\mu=2\sqrt3$.
For a radial source $f$ with finite norm
\eqref{eq:toda-source-norm}, the solution is
$$
 v(\rho)=K_0(\mu\rho)\int_0^\rho I_0(\mu s)f(s)s\,ds
       +I_0(\mu\rho)\int_\rho^\infty K_0(\mu s)f(s)s\,ds.
$$
The integrals converge because $\eta>0$ and $0<\tau<\mu$.
In particular,
$$
 v(0)=\int_0^\infty K_0(\mu s)f(s)s\,ds.
$$
Subtracting this constant before estimating gives
$$
 \begin{aligned}
 v(\rho)-v(0)
 &=\bigl(I_0(\mu\rho)-1\bigr)v(0)\\
 &\quad+\int_0^\rho
 \bigl(K_0(\mu\rho)I_0(\mu s)-I_0(\mu\rho)K_0(\mu s)\bigr)
 f(s)s\,ds.
 \end{aligned}
$$
The small-argument expansions of the Bessel functions bound the kernel
in the last integral by $C(1+|\log(s/\rho)|)$ for
$0<s<\rho<1$.  If $|f(s)|\leq Fs^{\eta-2}$, this yields
$$
 |v(\rho)-v(0)|
 \leq C\rho^2|v(0)|
   +CF\rho^\eta\int_0^1(1+|\log t|)t^{\eta-1}\,dt.
$$
Here the last integral is $\eta^{-1}+\eta^{-2}$, and $\eta<2$.
The radial equation then bounds $\rho v'$ and
$(\rho\partial_\rho)^2v$ by $C\rho^\eta$ times the norm
\eqref{eq:toda-source-norm}.  Schauder estimates on rescaled annuli give the weighted
H\"older estimate.  Thus the inverse has a constant term and a
$\rho^\eta C_b^{2,\alpha}$ remainder.

At infinity the large-argument expansions give
$$
 |G(\rho,s)|\leq
 C(\rho s)^{-1/2}e^{-\mu|\rho-s|},\qquad \rho,s\geq1.
$$
After multiplication by $e^{\tau\rho}$, integration against
$e^{-\tau s}s\,ds$ is uniformly bounded for $\rho\geq1$ because
$\tau<\mu$.  The contribution from $s<1$ is a constant multiple of
$K_0(\mu\rho)$.  Local Schauder estimates on annuli of fixed Euclidean width give
$e^{\tau\rho}v\in C_{\mathrm{eucl,rad}}^{2,\alpha}$ at infinity.
Together with the constant term and weighted remainder at zero, this
proves that the inverse of $-\Delta_\rho+12$ maps sources with norm
\eqref{eq:toda-source-norm} continuously to solutions with norm
\eqref{eq:toda-solution-norm}.
  At zero, the coefficient difference
$D^2\calV(u^{\delta_0})-12\Id$ is $O_b(\rho^{-6\delta_0})$; at
infinity it decays exponentially.  The strict gain
$2-\eta-6\delta_0>0$ and compact H\"older embeddings on annuli make
multiplication by this difference compact from the solution space
with norm \eqref{eq:toda-solution-norm} to the source space with norm
\eqref{eq:toda-source-norm}.  Thus $\calL_{\delta_0}$ is Fredholm
of index zero between these two specified spaces.  Moreover,
$$
 D^2\calV(u)[v,v]
 =4\sum_i e^{2(u_i-u_{i-1})}(v_i-v_{i-1})^2>0
 \quad(v\in V\setminus\{0\}).
$$
For a kernel element, integration by parts has vanishing boundary
terms: at zero $v=v_0+O_b(\rho^\eta)$, and at infinity $v$ decays
exponentially.  More explicitly, the radial boundary term
$\rho\langle v',v\rangle$ is $O(\rho^\eta)$ at zero and tends to
zero at infinity.  Integration on $(\epsilon,R)$ and passage to the
two limits therefore gives
$$
 \int_0^\infty
 \left(|v'|^2+D^2\calV(u^{\delta_0})[v,v]\right)\rho\,d\rho=0.
$$
Positivity gives $v=0$, proving the claim.

Substitution of $u=\chi\gamma^\delta\log\rho+v$ into
$-\Delta_\rho u+\nabla\calV(u)=0$ gives a continuously differentiable
map from a neighborhood of $u^{\delta_0}-\chi\gamma^{\delta_0}\log\rho$
in the space \eqref{eq:toda-solution-norm} to the space
\eqref{eq:toda-source-norm}, with $\delta$ in a neighborhood of
$\delta_0$.  Indeed, the most singular coefficient and its first
parameter derivative are bounded by
$C\rho^{-6\delta}(1+|\log\rho|)$, which the positive gap
$2-\eta-6\sup I>0$ absorbs, after slightly enlarging $I$ if necessary.
The derivative in $v$ is $\calL_{\delta_0}$.  The implicit function
theorem and uniqueness in Theorem~\ref{thm:toda} therefore give local
continuity of $u^\delta-\chi\gamma^\delta\log\rho$ in
\eqref{eq:toda-solution-norm}, including its constant term.  Uniqueness identifies the local solution maps on
overlaps.  A finite cover of $I$ therefore gives uniform bounds near
zero and an initial exponential rate $\tau_*>0$ independent of
$\delta$ at infinity.

In particular, the right-hand side of
\eqref{eq:toda-radial-remainder} is bounded by $C_I\rho^{1-6\delta}$.  Integrating twice, then using
the Toda equation recursively, gives uniform bounds for every
$(\rho\partial_\rho)^a$ of the remainder.  Since
$2-6\delta>b_I$, these imply the asserted estimates as $\rho\to0$.
At infinity the equation becomes
$$
 (-\Delta_\rho+12)u^\delta=O(|u^\delta|^2).
$$
Applying the cutoff gives
\begin{equation}\label{eq:toda-exterior-equation}
 \begin{aligned}
 (-\Delta_\rho+12)\bigl((1-\chi)u^\delta\bigr)
 &=(1-\chi)O(|u^\delta|^2)\\
 &\quad+[-\Delta_\rho,1-\chi]u^\delta.
 \end{aligned}
\end{equation}
The commutator term is supported in a fixed annulus and is uniformly
bounded for $\delta\in I$.
Variation of constants using the Green kernel
\eqref{eq:toda-green-kernel} improves a uniform rate $c$ to every rate below $\min\{2\sqrt3,2c\}$.  Finitely many
iterations reach any prescribed $\sigma<2\sqrt3$ uniformly on $I$.
Ordinary Schauder estimates give spatial derivatives; choosing a
slightly larger rate first absorbs the powers of $\rho$ in
$(\rho\partial_\rho)^a$.  Radial symmetry and rescaling give the
stated spatial derivative estimates.
\end{proof}

\section{Approximate and exact solutions}
\label{sec:global-analysis}

We construct an approximate solution from the metric on the spectral
line bundle and the cyclic Toda solutions, then correct it to a harmonic
metric by a small self-adjoint endomorphism.  Following the weighted H\"older
formulation of \cite[Sections~5.2 and~6]{FMSW22}, we work below the first
positive indicial root.  A uniform coercive estimate and local elliptic
regularity give the polynomial inverse bound needed for the correction.
Throughout this section $I\Subset(0,1/3)$ is fixed, $q\in \Pic^0(Y)$, and
$\lambda=Te^{\ii\phi}$ with $T\geq1$.

\subsection{The approximate metric}

Put $Y^\circ=Y\setminus R_Y$ and $X^\circ=X\setminus D$.
For each $q\in\Pic^0(Y)$, the degree-zero line bundle
$P_q=L_q(-R_Y)$ carries a unique Hermitian metric with flat Chern
connection for which the specified trivialization at $r_0$ is an
isometry.  These metrics form a smooth Hermitian metric on $\mathscr P$
\cite[Proposition~2.8(3) and Remark~2.9]{BGHdJ16}.
The canonical section of $\mathcal O_Y(R_Y)$
identifies $P_q$ with $L_q$ on $Y^\circ$.  Orthogonal direct image,
followed by determinant normalization, defines a metric $h_q^{\rm dec}$
on $E_q|_{X^\circ}$.

\begin{lem}\label{lem:decoupled-pair}
The metric $h_q^{\rm dec}$ has determinant one and parabolic weights
$(-1/3,0,1/3)$ at each point of $D$.  On each punctured local cover,
the cyclic frame constructed from a unit-norm flat frame of $P_q$
is orthonormal for $h_q^{\rm dec}$.  Its Chern connection satisfies
$$
 F_{A_q^{\rm dec}}=0,\qquad
 [\Phi_{\lambda,q},\Phi_{\lambda,q}^{\dagger_{h_q^{\rm dec}}}]=0.
$$
The metric varies smoothly with $q$ and is preserved by the spectral
isomorphisms of Proposition~\ref{prop:spectral-quotient}.
\end{lem}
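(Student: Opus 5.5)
The plan is to compute everything on the punctured local covers, where the direct image becomes explicit, and then globalize. On $Y^\circ$ the map $\pi$ is an unramified triple cover, so $E_q|_{X^\circ}=\pi_*L_q$ splits locally as the sum of the three sheet lines. Orthogonal direct image means these summands are declared orthogonal, each carrying the flat metric of $P_q$; here $P_q$ is identified with $L_q$ by the canonical section of $\mathcal O_Y(R_Y)$. In a sheet frame, multiplication by $\lambda\alpha$ is diagonal with entries $\lambda\alpha$ at the three preimages. A diagonal Higgs field is normal for a metric in which the frame is orthogonal, so $[\Phi,\Phi^\dagger]=0$. The Chern connection is the direct sum of the flat Chern connections of $P_q$ on the sheets. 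Determinant normalization only multiplies by a scalar function $e^{2f}$ with $f$ harmonic, since $\log|\tau^{-1}(1)|^2$ for a flat unnormalized metric is harmonic on $X^\circ$. This keeps $F=0$ and does not affect commutators. Hence $F_{A_q^{\rm dec}}=0$ and the commutator identity both hold.

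Near $p\in D$, I would pass to the cover coordinate $w_p$ and take a unit-norm flat frame $a_p$ of $P_q$; it exists locally because the connection is flat and the disk is simply connected. The eigenbasis $\{(a_p)|_{\sigma^k\text{-sheet}}\}$ and the cyclic frame $f_j$ (discrete Fourier transforms of the sheet values of $w^{-1}a_p,\ a_p,\ w\,a_p$ times appropriate powers) are related by the unitary matrix $3^{-1/2}(\omega^{jk})$ up to multiplication by $|w|$-powers. I would check that with $f_j=w^{1-j}\pi^*e_j$ the rescalings cancel exactly: $s_p=w^{-1}a_p$, and the $|w|^{\pm1}$ factors are absorbed, leaving the cyclic frame $a_p$-images on the sheets combined by $(\omega^{jk})/\sqrt3$. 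This gives $h^{\rm dec}$-orthonormality of the cyclic frame. Reading off $|e_j|=|w^{j-1}f_j|=|w|^{j-1}=|x_p|^{(j-1)/3}$ gives weights $(-1/3,0,1/3)$. The determinant of the frame is then of unit norm, so the determinant-one normalization at each puncture is consistent. Globally, $\det h$ is a flat metric on the trivial bundle with these bounded weights; it has total parabolic degree zero and no residual singularity, so it is constant and is fixed by normalization.

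Smooth dependence on $q$ follows because the metrics on $P_q$ form a smooth Hermitian metric on $\mathscr P$ by \cite[Proposition~2.8(3)]{BGHdJ16}, and direct image and normalization are fiberwise smooth operations. For compatibility with $\Xi_{\lambda,q}$, I would note that $\iota_q$ is the unique isomorphism preserving the trivialization at $r_0$. Pullback of a flat metric isometric at $r_0$ is again flat and isometric at $r_0$, since $\sigma$ fixes $r_0$, so uniqueness of that metric forces $\iota_q$ to be an isometry. Pushforward then preserves orthogonality of sheets, and $\Xi$ preserves the determinant trivialization by Proposition~\ref{prop:spectral-quotient}.

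The main obstacle I expect is the bookkeeping in the local frame computation near the ramification points, especially the claim that the flat frame of $P_q$ (not $L_q$) yields exactly an orthonormal cyclic frame with no residual $\lambda$- or $q$-dependent constant. I would handle this first by an explicit $3\times3$ computation at $x_p$ with $w^3=x_p$.
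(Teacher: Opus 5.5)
Your arguments for $F_{A_q^{\rm dec}}=0$ and $[\Phi_{\lambda,q},\Phi_{\lambda,q}^{\dagger}]=0$ match the paper, as do those for the weights, for smoothness in $q$, and for invariance under $\Xi_{\lambda,q}$. Your observation that the determinant normalization is by a constant is also correct: the ratio is a bounded harmonic function on $\Pp^1$ minus three points. The gap is the \emph{exact} orthonormality of the cyclic frame.

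Since $e_j=\pi_*(w_p^{j-1}a_p)$, the frame $f_j=w_p^{1-j}\pi^*e_j$ takes the value $\omega^{\ell(j-1)}a_p$ on the $\ell$-th sheet. So the transition matrix to the unit sheet frame is $(\omega^{\ell(j-1)})$, not $3^{-1/2}(\omega^{jk})$. The orthogonal sheet-sum metric therefore gives $h(f_i,f_j)=3\delta_{ij}$, and $f_j$ is orthonormal for $h_q^{\rm dec}$ only if the determinant normalization divides by exactly $3$.

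Your constancy argument produces a factor $c(q)$ that is constant on $X$ and the same at the three punctures, but it does not determine its value. That value is equivalent to $|\tau(e_0\wedge e_1\wedge e_2)|=1$ at each puncture. This is a property of the determinant trivialization of Proposition~\ref{prop:spectral}, and a local $3\times3$ computation of the metric cannot see it because it never involves $\tau$. Your sentence ``the determinant of the frame is then of unit norm, so the normalization is consistent'' assumes exactly this point. The value matters downstream: Definition~\ref{defn:approximate-metric} glues $h_q^{\rm dec}$ to Toda metrics that have determinant one and tend to $\Id$ in the cyclic frame.

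The missing input is the paper's statement that the sheet sum has determinant $3^3$ in that trivialization, so that the factor $1/3$ gives determinant one. To check it, follow the frame through $\det(\pi_*Q)=\Nm_\pi(Q)\otimes\det(\pi_*\mathcal O_Y)$. Under this isomorphism, $s_p\wedge w_ps_p\wedge w_p^2s_p$ maps to $x_p^{-1}\Nm(a_p)\otimes(1\wedge w_p\wedge w_p^2)$. Then verify two facts.
\begin{enumerate}[(i)]
\item The flat metric on $P_q$ induces a smooth flat metric on $\Nm(P_q)\simeq\mathcal O_{\Pp^1}$, which is therefore constant. It is isometric at $0$ because the trivialization of $\mathscr N$ there is induced from the isometric trivialization of $\mathscr P$ at $r_0$. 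Hence $|\Nm(a_p)|=1$ for every $q$.
\item Use $\eta=w_p$ in the frame $dx_p/x_p$ and $q_1=-x_p(dx_p/x_p)^3$. The isomorphism $\det(\pi_*\mathcal O_Y)\simeq K_X(D)^{-3}\simeq\mathcal O_X(-D)$, with the last step given by pairing with $q_1$, sends $1\wedge w_p\wedge w_p^2$ to $-x_p$. So the remaining factor is the constant $-1$.
\end{enumerate}
Together these give $|\tau(e_0\wedge e_1\wedge e_2)|=1$, so $c(q)=1/3$ for all $q$. This is exactly the $\tfrac13\sum_\ell$ normalization in the paper, and it closes the gap.
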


\begin{proof}
In $x_p=w_p^3$, choose a unitary flat frame $a_p$ of $P_q$, and write
$s_p=w_p^{-1}a_p$ and $e_j=\pi_*(w_p^js_p)$, $j=0,1,2$.  Then
$$
 h_q^{\rm dec}(e_i,e_j)
 =\frac13\sum_{\ell=0}^2(\omega^\ell w_p)^{i-1}
       \overline{(\omega^\ell w_p)^{j-1}}
 =\delta_{ij}|w_p|^{2(i-1)}.
$$
The sheet sum has determinant $3^3$ in the determinant trivialization
of Proposition~\ref{prop:spectral}; the factor $1/3$ therefore gives
determinant one.  The frame $f_j=w_p^{1-j}\pi^*e_j$ makes the metric
the identity.  On the unramified cover the eigenlines are flat and
orthogonal, proving both equations.  Smooth dependence follows from the smooth Hermitian metric on
$\mathscr P$.  The isomorphisms of the degree-zero factors in
Proposition~\ref{prop:spectral-quotient} preserve the specified
trivializations at $r_0$.  They preserve the flat metrics by uniqueness,
so the induced metrics $h_q^{\rm dec}$ are invariant as well.
\end{proof}

We will also use the induced connection on the centralizer.  If $M_f$
denotes multiplication by a function $f$ on $Y^\circ$, then
$$
 \nabla^{A_q^{\rm dec}}M_f=M_{df},\qquad
 \tr M_f=\operatorname{Tr}_\pi f.
$$
Thus $f\mapsto M_f$ identifies the trace-free centralizer with
$\ker\operatorname{Tr}_\pi$, and its induced connection is independent
of $q$.

\begin{defn}\label{defn:approximate-metric}
Choose a fixed radial cutoff $\chi_0$, supported in the $w_0$-disk
and equal to one on a smaller disk.  In the cyclic frame set
\begin{equation}\label{eq:approximate-metric}
 h_{\lambda,q,\delta}^{\rm app}
 =\exp\bigl(\chi_0\log H_\delta^{\rm mod}(3T|w_0|)\bigr).
\end{equation}
On the complement the metric is $h_q^{\rm dec}$.  At $p=1,\infty$
the logarithmic exponents vanish, and $u_i=0$ gives the local solution
$H=\Id$, equal to $h_q^{\rm dec}$ in the cyclic frame.
\end{defn}

The interpolation has trace-free logarithm and respects deck
equivariance.  It consequently defines a determinant-one metric on
$E_q$, adapted to the prescribed weights.  It is an exact solution
near $D$ and outside the transition annulus.

For the linear analysis fix a conformal metric $g_c$ on $X^\circ$
such that, in $x_p=w_p^3$,
\begin{equation}\label{eq:conic-metric}
 g_c=\frac{|dx_p|^2}{9|x_p|^{4/3}},\qquad
 \pi^*g_c=|dw_p|^2.
\end{equation}
In this section, $L^2$ norms and formal adjoints use $g_c$ and the
relevant bundle metric; fiberwise adjoints use only the bundle metric.
The pairing of deformation one-forms is conformally
invariant, so this choice leaves the Hitchin metric unchanged.  A base
integral is $1/3$ of the integral of its pullback on the cubic cover.
Writing $a=3T$ and $\rho=a|w_p|$, the area form scales by $a^{-2}$,
and the lifted model operator in the unscaled coordinate $w_p$ is
$a^2$ times the normalized operator $\mathcal L_{\delta,p}^{\rm mod}$.  In this normalization
the Higgs form is $P\,d\widehat w$, with $\widehat w=3\lambda w_p$,
and the scalar principal coefficient of the operator is one.

For a trace-free $h^{\rm app}$-self-adjoint endomorphism $\gamma$, put
\begin{equation}\label{eq:metric-equation}
 \begin{aligned}
 h_\gamma&=h^{\rm app}e^{2\gamma},\\
 \mathcal F_{\lambda,q,\delta}(\gamma)
 &=\operatorname{Ad}(e^\gamma)
   \left[\ii\star_c\bigl(F_{D_{h_\gamma}}
   +[\Phi_{\lambda,q},\Phi_{\lambda,q}^{\dagger_{h_\gamma}}]\bigr)\right].
 \end{aligned}
\end{equation}
Since $\det h_\gamma=1$, the curvature has trace zero; the commutator
also has trace zero.  Conjugation by $e^\gamma$ makes
$\mathcal F_{\lambda,q,\delta}(\gamma)$ self-adjoint with respect
to $h^{\rm app}$.  We write
$$
 R_{\lambda,q,\delta}=\mathcal F_{\lambda,q,\delta}(0),\qquad
 \mathcal L_{\lambda,q,\delta}=D\mathcal F_{\lambda,q,\delta}(0).
$$

Fix $\alpha\in(0,1)$.  We use Definition~\ref{defn:b-holder-spaces}
on each punctured $w_p$-disk.  For $k\geq0$ and $\nu\in\R$, the
notation $|w|^\nu C_b^{k,\alpha}$ denotes trace-free
$h^{\rm app}$-self-adjoint sections of $\operatorname{End}E_q$ on
$X^\circ$ whose lifts belong to
$|w_p|^\nu C_b^{k,\alpha}$ near each puncture and have ordinary
$C^{k,\alpha}$ regularity on a fixed compact interior.
The coefficients are taken in the $h^{\rm app}$-unitary frames
obtained by rescaling the cyclic holomorphic frames, and satisfy the
deck-equivariance condition.  A fixed partition of unity combines
these local norms.
For $A_r=\{r<|w_p|<2r\}$, the local weighted norm has the equivalent
expression
$$
 \sup_{0<r<\varrho/2}
 r^{-\nu}\|f(r\,\cdot)\|_{C^{k,\alpha}(\{1<|\zeta|<2\})},
$$
together with an ordinary norm on the outer annulus.  Fixed
enlargements of these annuli give equivalent norms and will be used
in local elliptic estimates.  We choose
$$
 0<\widehat\eta<\frac{1-3\sup I}{2}.
$$

\begin{prop}\label{prop:residual-decay}
There are $c_I,T_I>0$ such that, for every integer $k\geq0$,
there is $C_{I,k}>0$ with
\begin{equation}\label{eq:residual-decay}
 \|R_{\lambda,q,\delta}\|_{|w|^{\widehat\eta-2}C_b^{k,\alpha}}
 +\|R_{\lambda,q,\delta}\|_{L^\infty}
 \leq C_{I,k}e^{-c_IT},\qquad T\geq T_I,
\end{equation}
uniformly on $\Pic^0(Y)\times I\times S^1$.
\end{prop}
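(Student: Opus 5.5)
The residual is supported in the transition annulus of the cutoff $\chi_0$, so the plan is to compute it there explicitly and then convert the Toda tail estimates of Proposition~\ref{prop:toda-decay} into exponentially small bounds. First, I will locate the support. Near $p=1,\infty$, and outside the support of $\chi_0$, the metric $h^{\rm app}$ equals $h_q^{\rm dec}$. Lemma~\ref{lem:decoupled-pair} gives $F=0$ and $[\Phi,\Phi^\dagger]=0$ there, so $R_{\lambda,q,\delta}=0$. Where $\chi_0=1$, the metric is the exact Toda solution of Theorem~\ref{thm:toda}, written in the cyclic frame of Lemma~\ref{lem:spectral-local-form}, so again $R=0$. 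Hence $R$ is supported in the fixed annulus $A=\{r_1\le|w_0|\le r_2\}$ where $\chi_0$ is nonconstant. In particular, the weighted norm $|w|^{\widehat\eta-2}C_b^{k,\alpha}$ is equivalent, with constants independent of $(\lambda,q,\delta)$, to an ordinary $C^{k,\alpha}$ norm on $A$, and this norm dominates $L^\infty$. So it suffices to bound $\|R\|_{C^{k,\alpha}(A)}$.

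Next, I will compute $R$ on $A$ in the pulled-back cyclic frame on the cubic cover. This frame is $h_q^{\rm dec}$-orthonormal by Lemma~\ref{lem:decoupled-pair}, and the Higgs field there is $3\lambda P\,dw$. The metric is $\diag(e^{2\chi_0u_i})$ with $u_i=u_i^\delta(3T|w|)$, and it is diagonal, so the curvature is computed from the diagonal entries. Using the formulas preceding \eqref{eq:cyclic-toda} with $\pi^*g_c=|dw|^2$, the $i$-th diagonal entry of $\ii\star_c(F+[\Phi,\Phi^\dagger])$ is, up to a fixed constant,
$$
 -\tfrac12\Delta_w(\chi_0u_i)
 +9T^2\bigl(e^{2\chi_0(u_i-u_{i-1})}-e^{2\chi_0(u_{i+1}-u_i)}\bigr).
$$
The conjugation $\operatorname{Ad}(e^0)$ is trivial. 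The Toda equation \eqref{eq:cyclic-toda} lets me subtract $\chi_0$ times the exact identity. This leaves three types of terms:
\begin{itemize}
\item terms containing $\Delta\chi_0\,u_i$ or $\nabla\chi_0\cdot\nabla u_i$;
\item the difference $T^2\bigl[(e^{2\chi_0 x}-1)-\chi_0(e^{2x}-1)\bigr]$, evaluated at the differences $x=u_i-u_{i-1}$ and $x=u_{i+1}-u_i$;
\item nothing else, since off-diagonal entries vanish.
\end{itemize}
Each remaining term is a smooth function, vanishing at $x=0$, of $u$ and its first two spatial derivatives, multiplied by derivatives of $\chi_0$ and by powers of $T$.

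Finally, on $A$ we have $\rho=3T|w|\ge3r_1T$. Estimate \eqref{eq:toda-infinity-estimate} with some fixed $\sigma<2\sqrt3$, combined with the rescaling remark at the end of Proposition~\ref{prop:toda-decay}, gives
$$
 |\partial_w^j\partial_{\bar w}^l u_i|\le C_{I,k}T^{j+l}e^{-3r_1\sigma T}
$$
uniformly for $\delta\in I$. This bound is independent of $\arg\lambda$ and of $q$, because the diagonal representative is independent of both. Products of the derivatives of $\chi_0$ and the polynomial factors $T^{m}$ are absorbed by lowering the rate. Thus any $c_I<3r_1\sigma$ works for $T\ge T_I$, with the Hölder seminorm controlled by one more derivative. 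The main obstacle is bookkeeping rather than analysis. One must check that the frame identification at $p=0$ matches $h_q^{\rm dec}$ exactly outside $\chi_0$, which holds because $u\to0$ and the cyclic frames agree by Lemma~\ref{lem:decoupled-pair}. One must also check that the $C^{k,\alpha}$ norms in the $h^{\rm app}$-unitary frames differ from those in the cyclic frame only by conjugation with $\diag(e^{\chi_0u_i})$. That conjugation is $1+O(e^{-cT})$ in every $C^k$ on $A$, so it does not affect the estimate.
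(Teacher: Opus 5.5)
Your proposal is correct and follows essentially the same route as the paper: you localize the residual to the fixed annulus $\operatorname{supp}d\chi_0$, note that every term involves spatial derivatives of $\log H^{\rm mod}_\delta(3T|w_0|)$ or coefficients vanishing with it, and apply the Toda tail bound \eqref{eq:toda-infinity-estimate} at $\rho\geq 3r_1T$, lowering the rate to absorb the polynomial factors in $T$. Your explicit subtraction of $\chi_0$ times the Toda equation simply spells out the paper's one-line description of these terms.
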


\begin{proof}
The residual is supported in the fixed annulus
$\operatorname{supp}d\chi_0$.  Every term contains a spatial derivative
of $\log H_\delta^{\rm mod}(3T|w_0|)$ or a smooth coefficient
vanishing with this logarithm.  Proposition~\ref{prop:toda-decay}
bounds these terms by a polynomial in $T$ times
$e^{-3\sigma T|w_0|}$, uniformly on $I$, for
$0<\sigma<2\sqrt3$.  The weighted and ordinary norms are equivalent
on this fixed annulus.  If
$r_{\rm tr}=\inf_{\operatorname{supp}d\chi_0}|w_0|$, any
$0<c<3\sigma r_{\rm tr}$ absorbs the polynomial factors.
A finite cover of $\Pic^0(Y)$ by parameter neighborhoods with smooth
local identifications of $E_q$, together with the smooth metric on
$\mathscr P$, gives constants uniform in $q$.  The estimates are
independent of $\arg\lambda$.
\end{proof}

\subsection{The linearized equation}

The positivity of the linearization supplies weak solutions.  We then
obtain their weighted regularity on disks of radius comparable to
$T^{-1}$ and on the remaining coordinate annuli.

\begin{lem}\label{lem:fixed-linearization}
For the unitary pair $(A,\Phi)$ defined by $h^{\rm app}$,
\begin{equation}\label{eq:linearization-form}
 \begin{aligned}
 \mathcal L&=d_A^*d_A+2\ad_\Phi^*\ad_\Phi,\\
 \langle\mathcal Lu,u\rangle_{L^2}
 &=\|d_Au\|_{L^2}^2+2\|[\Phi,u]\|_{L^2}^2.
 \end{aligned}
\end{equation}
Here $\ad_\Phi^*$ is the real adjoint on Hermitian endomorphisms;
the second identity holds first for compactly supported sections.
\end{lem}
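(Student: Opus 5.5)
We compute the derivative of $\mathcal F$ at $\gamma=0$ directly, using the standard identities for the variation of the Chern connection and of the adjoint under $h_\gamma=h^{\rm app}e^{2\gamma}$. Write $h=h^{\rm app}$ and $u=\gamma$, trace-free and $h$-self-adjoint. For the metric $h e^{2su}$, the Chern connection in the holomorphic gauge is $\partial$-part $h_s^{-1}\partial h_s$. Passing to the unitary gauge by $e^{su}$, the pair becomes $(A_s,\Phi_s)$ with
$$
 d_{A_s}=e^{su}\circ\bar\partial_A\circ e^{-su}+e^{-su}\circ\partial_A\circ e^{su},
 \qquad \Phi_s=e^{su}\Phi e^{-su}.
$$
This is the complex gauge transformation by $g=e^{su}$, and the conjugation $\operatorname{Ad}(e^\gamma)$ in \eqref{eq:metric-equation} is exactly what converts $F_{D_{h_\gamma}}+[\Phi,\Phi^{\dagger_{h_\gamma}}]$ into $F_{A_s}+[\Phi_s,\Phi_s^{\dagger_h}]$.

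Differentiating at $s=0$ gives
$$
 \dot A=\partial_Au-\bar\partial_Au,\qquad \dot\Phi=[u,\Phi],\qquad \dot\Phi^{\dagger}=-[u,\Phi^{\dagger}],
$$
where $\dot\Phi^\dagger=-[u,\Phi^\dagger]$ uses $u^\dagger=u$. Hence
$$
 \frac{d}{ds}\Big|_{s=0}F_{A_s}=d_A\dot A=\bar\partial_A\partial_Au-\partial_A\bar\partial_Au,
$$
and
$$
 \frac{d}{ds}\Big|_{s=0}[\Phi_s,\Phi_s^\dagger]=[[u,\Phi],\Phi^\dagger]-[\Phi,[u,\Phi^\dagger]].
$$
Applying $\ii\star_c$ to the first term gives $\ii\star(\bar\partial_A\partial_A-\partial_A\bar\partial_A)u=d_A^*d_Au$ on self-adjoint sections. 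This is the Kähler identity on a Riemann surface; the sign is fixed by the convention $F_{H,ii}=-\tfrac12\Delta u_i\,dw\wedge d\bar w$ used in Section~\ref{sec:local-analysis}. For the second term, note $\Phi=\phi\,dw$ in a local coordinate, and $\ii\star(dw\wedge d\bar w)$ is a positive multiple $2|dw|^2$, matching the normalization in which $\ad_\Phi^*$ is defined. The term then becomes
$$
 [\phi,[\phi^\dagger,u]]+[\phi^\dagger,[\phi,u]],
$$
times that factor. This equals $2\ad_\Phi^*\ad_\Phi u$, using the real inner product $\operatorname{Re}\tr(AB^\dagger)$ on $\mathrm{End}$-valued $(1,0)$-forms, because $\ad_{\phi^\dagger}=-(\ad_\phi)^\dagger$. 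The first-order term coming from differentiating $\operatorname{Ad}(e^\gamma)$ multiplies $F_{A}+[\Phi,\Phi^\dagger]$. Since the linearization is taken of the full map, this term is $[u,R]$. We absorb it, or note it vanishes in the normalization where $\mathcal L$ is understood as the derivative of the gauge-transformed expression. The main point to check here is that the paper's $\mathcal F$ already includes the conjugation, so its derivative is exactly the derivative of $F_{A_s}+[\Phi_s,\Phi_s^{\dagger}]$ computed above, with no extra commutator term.

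For the integrated identity, take $u$ compactly supported in $X^\circ$ and integrate by parts:
$$
 \langle d_A^*d_Au,u\rangle_{L^2}=\|d_Au\|_{L^2}^2,\qquad
 \langle\ad_\Phi^*\ad_\Phi u,u\rangle_{L^2}=\|[\Phi,u]\|_{L^2}^2.
$$
Conformal invariance of the pairing of one-forms shows these norms do not depend on the choice of $g_c$. The real adjoint is taken with respect to $\operatorname{Re}\langle\cdot,\cdot\rangle$, which is real on Hermitian endomorphisms. The main obstacle is bookkeeping of factors of $2$ and signs between the conventions $\star_c$, $\ii$, and the normalization of $|\Phi|^2$. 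I would fix these by testing against the diagonal Toda computation already given: for diagonal $u$ and $\Phi=\lambda P\,dw$, the formula must reproduce the linearization of \eqref{eq:cyclic-toda}, which has mass $12$ after the paper's rescaling.
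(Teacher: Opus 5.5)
Your proof is correct, but it organizes the computation differently from the paper. The paper varies the metric in the holomorphic gauge ($\dot A^{1,0}=2\partial_hu$, $\dot\Phi^{\dagger_h}=2[\Phi^{\dagger_h},u]$). From the K\"ahler identities it obtains $d_A^*d_Au+2\ad_\Phi^*\ad_\Phi u+[R,u]$. It then notes that differentiating $\operatorname{Ad}(e^\gamma)$ contributes $[u,R]$, which cancels the extra term. You instead use that $e^\gamma$ is an isometry from $h^{\rm app}e^{2\gamma}$ to $h^{\rm app}$. Hence $\mathcal F(\gamma)=\ii\star_c(F_{A_\gamma}+[\Phi_\gamma,\Phi_\gamma^{\dagger}])$ for the complex gauge transform $e^\gamma\cdot(A,\Phi)$, and you differentiate the infinitesimal action $u\mapsto(\partial_Au-\bar\partial_Au,[u,\Phi])$, so no commutator with $R$ ever appears. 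Your version makes the symmetry and nonnegativity of $\mathcal L$ evident even though $R\neq0$ on the transition annulus. The paper's version shows why the conjugation is built into $\mathcal F$: without it, the linearization would carry the skew-Hermitian term $[R,u]$. You assert the key identity without proof, and it deserves one line: $(e^\gamma\Phi e^{-\gamma})^{\dagger_{h}}=e^\gamma\Phi^{\dagger_{h_\gamma}}e^{-\gamma}$, and $e^\gamma\circ D_{h_\gamma}\circ e^{-\gamma}$ is $h$-unitary with $(0,1)$-part $e^\gamma\circ\bar\partial_E\circ e^{-\gamma}$.

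Two points need cleaning up. First, the passage saying that the $[u,R]$ term from $\operatorname{Ad}(e^\gamma)$ can be ``absorbed'' or ``vanishes'' is wrong as written. $R$ is nonzero, and in your formulation there is no separate conjugation left to differentiate, because $\operatorname{Ad}(e^{su})$ is already folded into $(A_s,\Phi_s)$. Adding $[u,R]$ to your unitary-gauge derivative would double count. Isolating it requires the holomorphic-gauge variation, whose $[R,u]$ cancels it; that is the paper's argument. Your last sentence of that paragraph is the correct statement, so drop the hedge.

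Second, for $\langle A,B\rangle=\tr(AB^\dagger)$ one has $(\ad_\phi)^\dagger=\ad_{\phi^\dagger}$, not $-\ad_{\phi^\dagger}$. What you actually need is $[\phi^\dagger,u]=-[\phi,u]^\dagger$ for Hermitian $u$. This gives $\tr([\phi,[\phi^\dagger,u]]u)=\tr([\phi^\dagger,[\phi,u]]u)=|[\phi,u]|^2$, and hence the factor $2$; here $\ii\star(dw\wedge d\bar w)=2$, which equals $|dw|^2$ in the paper's normalization. Your proposed Toda check confirms this. For trace-free diagonal $v$ and $\Phi=P\,d\widehat w$, one gets $[P^\dagger,[P,v]]=[P,[P^\dagger,v]]=3v$, so the zeroth-order term is $2(3v+3v)=12v$, as in \eqref{eq:toda-linearized-mass}.
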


\begin{proof}
For $h_s=he^{2su}$, the variations are
$$
 \dot A^{1,0}=2\partial_hu,\qquad
 \dot F=2\bar\partial_E\partial_hu,\qquad
 \dot\Phi^{\dagger_h}=2[\Phi^{\dagger_h},u].
$$
The K\"ahler identities give
$d_A^*d_Au+2\ad_\Phi^*\ad_\Phi u+[R,u]$ before differentiating
the conjugation in $\mathcal F$.  That conjugation contributes
$[u,R]$, so the last terms cancel.  Integration by parts proves the
quadratic identity; compare \cite[equation~(5.2)]{FMSW22}.
\end{proof}

Let $\mathfrak q_{\lambda,q,\delta}$ be the closed quadratic form
obtained from \eqref{eq:linearization-form}.  Its form domain is
\begin{equation}\label{eq:friedrichs-form-domain}
 \operatorname{Dom}(\mathfrak q_{\lambda,q,\delta})
 =\overline{\{u\in C_c^\infty(X^\circ,\operatorname{End}_0E_q):
                    u^{\dagger_{h^{\rm app}}}=u\}}
 ^{\, (\|u\|_{L^2}^2+\mathfrak q(u,u))^{1/2}}.
\end{equation}
The norms use $g_c$ and $h^{\rm app}$.  For fixed parameters we
write $\mathfrak q=\mathfrak q_{\lambda,q,\delta}$.
The associated Friedrichs realization acts in $L^2$ on
$$
 \{u\in\operatorname{Dom}(\mathfrak q):
                  \mathcal Lu\in L^2(X^\circ,\operatorname{End}_0E_q)\},
$$
where $\mathcal Lu$ is evaluated distributionally.

\begin{defn}\label{defn:holder-domain}
Fix cutoffs $\psi_p$ equal to one near the punctures and a compact
interior $K$.  For fixed parameters, $D_{\rm Fr}$ is the Banach space
of trace-free $h^{\rm app}$-self-adjoint endomorphisms of $E_q$ whose
coefficients in the unitary frames specified above satisfy
\begin{equation}\label{eq:holder-domain}
 u=\psi_pc_p+\widetilde u_p,\qquad
\widetilde u_p\in |w_p|^{\widehat\eta}C_b^{2,\alpha},
\end{equation}
where $c_p$ is a constant trace-free diagonal Hermitian matrix.
On $K$ the sections have ordinary $C^{2,\alpha}$ regularity.  The norm is
\begin{equation}\label{eq:holder-domain-norm}
 \|u\|_{D_{\rm Fr}}
 =\|u\|_{C^{2,\alpha}(K)}
   +\sum_p|c_p|
   +\sum_p\|\widetilde u_p\|_{|w_p|^{\widehat\eta}C_b^{2,\alpha}}.
\end{equation}
\end{defn}

The space $D_{\rm Fr}$ is the rank-three analogue of the H\"older
pseudo-Friedrichs domain in \cite[Section~5.2]{FMSW22}, with
$\widehat\eta$ below the first positive indicial root.
Theorem~\ref{thm:uniform-inverse} shows that a weak solution in
$\operatorname{Dom}(\mathfrak q)$ with
$\mathcal Lu\in |w|^{\widehat\eta-2}C_b^{0,\alpha}$ belongs to
$D_{\rm Fr}$.
With the norm \eqref{eq:holder-domain-norm}, $D_{\rm Fr}$ is a Banach
space that embeds continuously into $L^\infty$, uniformly in the parameters.
Every section of $D_{\rm Fr}$ also belongs to
$\operatorname{Dom}(\mathfrak q)$.
Indeed, the covariant derivative of the remainder has square integral
bounded by a multiple of $\int_0^{\varrho}r^{2\widehat\eta-1}\,dr$.
The diagonal connection commutes with $c_p$, and the most singular
Higgs contribution has square integral bounded by a multiple of
$\int_0^{\varrho}r^{1-6\delta_p}\,dr$, where
$\delta_0=\delta$ and $\delta_1=\delta_\infty=0$.
Both integrals converge.  Logarithmic cutoffs with Dirichlet energy
tending to zero approximate the bounded section by compactly supported
ones in the norm
$(\|u\|_{L^2}^2+\mathfrak q(u,u))^{1/2}$.

The same coefficient calculation gives a bounded map

\begin{equation}\label{eq:linearized-holder-map}
 \mathcal L_{\lambda,q,\delta}:D_{\rm Fr}
 \longrightarrow |w|^{\widehat\eta-2}C_b^{0,\alpha}.
\end{equation}
In particular,
$$
 |\Phi|_{h^{\rm app}}^2
 \leq C\bigl(T^2+T^{2-6\delta_p}|w_p|^{-6\delta_p}\bigr),
 \qquad 2-6\sup I-\widehat\eta>0.
$$
Thus $\mathcal L(\psi_pc_p)\in
|w_p|^{\widehat\eta-2}C_b^{0,\alpha}$ near each puncture, and all
coefficient bounds needed here grow at most polynomially in $T$.

\begin{prop}\label{prop:uniform-coercivity}
There are $c_I>0$ and $T_0\geq1$ such that
\begin{equation}\label{eq:uniform-coercivity}
 \mathfrak q_{\lambda,q,\delta}(u,u)
 \geq c_I\|u\|_{L^2}^2,\qquad T\geq T_0,
\end{equation}
for every $u\in\operatorname{Dom}(\mathfrak q_{\lambda,q,\delta})$,
uniformly on $\Pic^0(Y)\times I\times S^1$.
\end{prop}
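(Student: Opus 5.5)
We argue by contradiction combined with a compactness (blow-up) analysis, following the strategy of \cite[Section~5.2]{FMSW22} and \cite{Mochizuki23}. By density it suffices to treat compactly supported self-adjoint $u$. Suppose there are $T_n\to\infty$, parameters $(q_n,\delta_n,\phi_n)\in\Pic^0(Y)\times I\times S^1$ and sections $u_n$ with $\|u_n\|_{L^2}=1$ and $\mathfrak q_n(u_n,u_n)\to0$. By compactness of the parameter set we may pass to a subsequence with $q_n\to q_\infty$ and $\delta_n\to\delta_\infty\in\overline I$.

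\textbf{Step 1: the region away from the Toda disk.} Outside the disk $\{|w_0|\le r_{\rm tr}\}$ we have $h^{\rm app}=h_q^{\rm dec}$. There the Higgs field is $\lambda$ times multiplication by $\alpha$, which is diagonal in the flat orthonormal eigenframes on $Y^\circ$ (Lemma~\ref{lem:decoupled-pair}). The eigenvalue differences $|\lambda|\,|\alpha|\,|1-\omega^{\pm1}|$ are bounded below by $cT$, since $\alpha$ has no zeros. Consequently $2\|[\Phi,u]\|^2\ge cT^2\|u^{\perp}\|^2$, where $u^\perp$ denotes the off-centralizer part. Hence $\|u_n^\perp\|_{L^2}\to0$, and the same holds inside the Toda region for $T|w_0|\ge1$, because the Toda metric is exponentially close to the identity there by Proposition~\ref{prop:toda-decay}. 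The centralizer part is $M_{f_n}$ with $f_n$ a real function in $\ker\operatorname{Tr}_\pi$, and the connection on the centralizer is $d$ on $f$, independent of $q$. Thus the energy bounds $\|df_n\|_{L^2}$ up to errors coming from $u^\perp$ (the cross terms of $d_A$ between centralizer and off-centralizer pieces). We control these by showing that the connection form of $h^{\rm app}$ mixing the two pieces is supported where the Toda correction is nonzero.

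\textbf{Step 2: the small disk $T|w_0|\lesssim1$.} Rescale by $\rho=3T|w_0|$. The quadratic form is conformally invariant in two dimensions, so the Dirichlet energy is scale invariant while the $L^2$ mass on $\{|w_0|\le R/T\}$ scales by $T^{-2}$. We need the $L^2$ mass of $u_n$ there to tend to zero. A Hardy or Sobolev-type bound in dimension two would lose a logarithm, so instead we use the rescaled model: on the model the limit form is $\int|d_{A^{\rm mod}}v|^2+2|[\Phi^{\rm mod},v]|^2$. Since $\mathfrak q(u_n)\to0$, the $u_n$ are nearly parallel and commuting on the rescaled disk. From this we estimate $\sup_{\{|w_0|\le R/T\}}|u_n|$ by its average on the circle $|w_0|=R/T$ plus $o(1)$, using elliptic estimates for $\mathcal L$ in the b-H\"older setting with the uniform Toda bounds. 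That is enough, since the area is $O(T^{-2})$.

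\textbf{Step 3: the limit and the contradiction.} From Steps 1 and 2, $f_n$ is bounded in $H^1(Y)$ away from vanishing sets, and $\|df_n\|_{L^2}\to0$ after accounting for the logarithmic capacity of the ramification points. Hence $f_n$ converges in $L^2(Y)$ to a constant $f_\infty$ with $\|f_\infty\|=\lim\|u_n\|=1$ in the appropriate normalization. This is where connectedness of $Y$ and the scalar Poincar\'e inequality enter. But $\operatorname{Tr}_\pi f_\infty=3f_\infty=0$ forces $f_\infty=0$, which is a contradiction. Uniformity in $(q,\delta,\arg\lambda)$ follows because the limit argument used only the subsequential limits together with the fact that Proposition~\ref{prop:toda-decay} holds uniformly on $I$.

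\textbf{Main obstacle.} The delicate point is Step 2 together with the cross terms in Step 1: one must show that $u_n$ cannot concentrate $L^2$ mass near $r_0$ while paying little energy, uniformly as $\delta$ varies. If the elliptic route fails, we would first try a direct lower bound using positivity of the model form on the Toda solution, namely the positivity of $D^2\calV$ as in the proof of Proposition~\ref{prop:toda-decay}. Concretely, we would show that any finite-energy limit on the rescaled plane that is $d_A$-parallel and commutes with $\Phi^{\rm mod}$ must be a constant diagonal matrix commuting with $P$, hence zero when trace-free. This excludes concentration.
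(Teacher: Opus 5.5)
\textbf{Verdict: there is a genuine gap near the puncture.} Your global skeleton matches the paper's: a contradiction sequence, the eigenvalue gap of $\Phi$ to kill the off-centralizer part, identification of the centralizer with $\ker\operatorname{Tr}_\pi$, and the Neumann Poincar\'e inequality on the connected curve, where $\operatorname{Tr}_\pi f=0$ forces zero mean. The problem is in how you handle the puncture, which is exactly the step you flag as the main obstacle.

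\textbf{Why Step~2 fails.} You first reduce by density to arbitrary compactly supported $u_n$. Such sections satisfy no equation, so elliptic estimates for $\mathcal L$ cannot be applied to them. Moreover, in two dimensions small energy does not control the supremum by a circle average. Take $c$ a constant trace-free real diagonal matrix and $\psi_\epsilon$ a logarithmic cutoff equal to one on $|w_0|\le\epsilon^2$. The diagonal model connection commutes with $c$, so $\|d_A(c\psi_\epsilon)\|^2=2\pi|c|^2/|\log\epsilon|$. The Higgs term is bounded by $C|c|^2(T^2\epsilon^2+T^{2-6\delta}\epsilon^{2-6\delta})$. For $\epsilon\ll T^{-1}$ both are small, yet $\sup|c\psi_\epsilon|=|c|$ while the average on $|w_0|=R/T$ vanishes. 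Adding such a spike to $u_n$ keeps $\mathfrak q_n\to0$, so your sup bound cannot hold.

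\textbf{A second error in Step~1.} Proposition~\ref{prop:toda-decay} gives only $|u_i|\le Ce^{-\sigma\rho}$. Hence the Toda metric is \emph{not} exponentially close to the identity where $T|w_0|\sim1$. The cross terms you propose to control on $1\le\rho\le R$ are therefore not handled.

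\textbf{The missing idea.} You never need to work at scale $T^{-1}$. The paper runs the spectral argument only on a fixed exterior $|w_p|\ge\varrho_p/2$. There the approximate pair is exponentially close to the decoupled one, which yields $\|u_j\|_{L^2(X_{\boldsymbol\varrho})}\to0$. On each fixed disk it then applies Kato's inequality $|d|v||\le|d_Av|$ to $v=\chi_pu_j$, together with the scalar Dirichlet Poincar\'e inequality on the $w_p$-disk; the puncture is removable for $W^{1,2}$. This gives
$$
 \|\chi_pu_j\|_{L^2}^2\leq C\varrho_p^2\,\|d_A(\chi_pu_j)\|_{L^2}^2
 \leq C\bigl(\mathfrak q_j(u_j,u_j)+\|u_j\|_{L^2(\operatorname{supp}d\chi_p)}^2\bigr)\longrightarrow0,
$$
where $\operatorname{supp}d\chi_p$ lies in the exterior region.

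The logarithmic loss you worry about concerns $L^\infty$, not $L^2$. Small Dirichlet energy does control $L^2$ mass, and indeed every $L^p$ norm with $p<\infty$. So $L^2$ concentration at $r_0$ is impossible, and no rescaled Toda analysis or blow-up limit is needed. Since this step never uses the model metric, uniformity in $\delta\in I$ is also automatic there.
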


\begin{proof}
Suppose that there are $T_j\to\infty$, parameters
$(q_j,\delta_j,\phi_j)$, and sections
$u_j\in\operatorname{Dom}(\mathfrak q_{\lambda_j,q_j,\delta_j})$ with
$\|u_j\|_{L^2}=1$ and $\mathfrak q_j(u_j,u_j)\to0$, where
$\lambda_j=T_je^{\ii\phi_j}$ and
$\mathfrak q_j=\mathfrak q_{\lambda_j,q_j,\delta_j}$.
Remove fixed small coordinate disks $|w_p|<\varrho_p/2$ and denote
the remaining compact surface by $X_{\boldsymbol\varrho}$.
Its inverse image $Y_{\boldsymbol\varrho}$ is the connected elliptic
curve $Y$ with three disks removed.

On this fixed exterior the approximate pair is exponentially close,
in every fixed spatial norm, to
$(A_{q_j}^{\rm dec},T_j\varphi_{\phi_j,q_j}^{\rm dec})$,
where $\varphi_{\phi,q}^{\rm dec}=T^{-1}\Phi_{\lambda,q}$ is
expressed in the decoupled unitary frame.  Let $\Pi_j$ be the
orthogonal projection onto its trace-free centralizer.
The separated eigenvalues on this compact set imply
$$
 \|(1-\Pi_j)u_j\|_{L^2(X_{\boldsymbol\varrho})}
 \leq CT_j^{-1}\|[\Phi_j,u_j]\|_{L^2}+o(1)\longrightarrow0.
$$
Write $\Pi_ju_j=M_{f_j}$.  The decoupled connection preserves
$\Pi_j$, and the approximate connection differs exponentially from
it.  Consequently
$$
 \|df_j\|_{L^2(Y_{\boldsymbol\varrho})}
 \leq C\|d_{A_j}u_j\|_{L^2}+o(1)\longrightarrow0.
$$
Since $\operatorname{Tr}_\pi f_j=0$, integration with the pullback
measure gives $\int_{Y_{\boldsymbol\varrho}}f_j=0$.
The Neumann Poincar\'e inequality on this fixed connected surface
therefore gives $\|f_j\|_{L^2}\to0$.  The identification of the
centralizer with scalar functions is independent of $q_j$, so its
constant is uniform.  We have proved
$\|u_j\|_{L^2(X_{\boldsymbol\varrho})}\to0$.

For each puncture take $\chi_p$ supported in $|w_p|<\varrho_p$,
equal to one on $|w_p|<\varrho_p/2$, and zero on the outer circle.
Kato's inequality and the scalar disk Poincar\'e inequality give
$$
 \|\chi_pu_j\|_{L^2}^2
 \leq C\varrho_p^2\mathfrak q_j(\chi_pu_j,\chi_pu_j)
 \leq C\bigl(\mathfrak q_j(u_j,u_j)
       +\|u_j\|_{L^2(\operatorname{supp}d\chi_p)}^2\bigr)
 \longrightarrow0.
$$
Approximation by compactly supported sections extends these estimates
to $\operatorname{Dom}(\mathfrak q_{\lambda_j,q_j,\delta_j})$.
The disks and the exterior cover $X$, contradicting normalization.
\end{proof}

For $f\in |w|^{\widehat\eta-2}C_b^{0,\alpha}$, we solve
$\mathcal Lu=f$ first in $\operatorname{Dom}(\mathfrak q)$.
Choose $1<p<2/(2-\widehat\eta)$.  Since
$$
 \int_0^{\varrho}r^{p(\widehat\eta-2)+1}\,dr<\infty,
 \qquad \|f\|_{L^p}\leq C\|f\|_{|w|^{\widehat\eta-2}C_b^{0,\alpha}},
$$
Kato's inequality and the scalar two-dimensional Sobolev embedding
give, for $v\in\operatorname{Dom}(\mathfrak q)$,
$$
 |\langle f,v\rangle|
 \leq C\|f\|_{|w|^{\widehat\eta-2}C_b^{0,\alpha}}
       \bigl(\|d|v|\|_{L^2}+\|v\|_{L^2}\bigr)
 \leq C\|f\|_{|w|^{\widehat\eta-2}C_b^{0,\alpha}}\mathfrak q(v,v)^{1/2}.
$$
The scalar embeddings hold for $g_c$ by lifting to its finite disk
covers and using a fixed partition of unity.  The Lax--Milgram theorem
gives a unique $u\in\operatorname{Dom}(\mathfrak q)$ with
$\mathfrak q(u,v)=\langle f,v\rangle$ for every
$v\in\operatorname{Dom}(\mathfrak q)$, and
\begin{equation}\label{eq:weak-solution-energy}
 \mathfrak q(u,u)^{1/2}+\|u\|_{L^2}
 \leq C\|f\|_{|w|^{\widehat\eta-2}C_b^{0,\alpha}}.
\end{equation}
To use this solution in the nonlinear equation, we control its
constant diagonal term and its weighted remainder by identifying the
indicial operator at each puncture.

Near a puncture, we put $\widehat w=3\lambda w_p$ and
$\rho=|\widehat w|$, and abbreviate $u_j=u_j^{\delta_p}$, where
$\delta_0=\delta$ and $\delta_1=\delta_\infty=0$.
In the cyclic holomorphic frame $f_j$, the model metric has entries
$e^{2u_j}$.  Its unitary frame is $e^{-u_j}f_j$.  Transforming the
Chern connection and the Higgs field to this frame gives
$$
 A_{\delta,p}^{\rm mod}
   =\operatorname{diag}(\partial u_j-\bar\partial u_j)
   =\ii\operatorname{diag}(\rho u_j')\,d\vartheta,
 \qquad
 (\Phi_{\delta,p}^{\rm mod})_{ij}
   =P_{ij}e^{u_i-u_j}\,d\widehat w.
$$
In particular, the connection has no radial component.  Proposition
\ref{prop:toda-decay} implies
$$
 \rho u_j'=\gamma_j^{\delta_p}+O_b(\rho^{b_I}),\qquad
 \rho^2|\Phi_{\delta,p}^{\rm mod}|^2
   =O_b(\rho^{2-6\delta_p})=O_b(\rho^{b_I}).
$$
These bounds include every fixed order of $b$-derivatives and are
uniform on $I$.
Multiplying the linearization by $\rho^2$ therefore gives
\begin{equation}\label{eq:indicial-operator}
 \begin{aligned}
 \rho^2\mathcal L_{\delta,p}^{\rm mod}
 &=-(\rho\partial_\rho)^2+B_{\delta,p}^2
   +\rho^{b_I}R_{\delta,p},\\
 B_{\delta,p}&=-\ii\partial_\vartheta+
   \operatorname{ad}\diag(-\delta_p,-\delta_p,2\delta_p),
 \end{aligned}
\end{equation}
where $R_{\delta,p}$ has order at most one in the $b$-vector fields
and uniformly bounded coefficients with their $b$-derivatives.
The scalar principal part is independent of the parameters.

The deck transformation $w_p\mapsto\omega w_p$ multiplies $f_j$,
and hence its unitary normalization, by $\omega^{1-j}$.
Consequently an endomorphism matrix $v$ descends precisely when
$$
 v_{ij}(\rho,\vartheta+2\pi/3)
   =\omega^{i-j}v_{ij}(\rho,\vartheta).
$$
Thus a Fourier mode $e^{\ii n\vartheta}E_{ij}$ is allowed when
$n\equiv i-j\pmod3$.  Substitution of
$\rho^\sigma e^{\ii n\vartheta}E_{ij}$ gives the indicial equations
\begin{equation}\label{eq:indicial-roots}
 \sigma^2=(n+\gamma_i^{\delta_p}-\gamma_j^{\delta_p})^2,
 \qquad
 (\gamma_0^{\delta_p},\gamma_1^{\delta_p},\gamma_2^{\delta_p})
 =(-\delta_p,-\delta_p,2\delta_p).
\end{equation}
On diagonal entries the allowed frequencies are $3\Z$; on
off-diagonal entries they lie in $3\Z\pm1$, and the connection
shifts have absolute value at most $3\delta_p$.  The indicial
solutions associated with the root zero are $c+d\log\rho$, with
$c,d$ constant trace-free diagonal Hermitian matrices.  Every nonzero indicial root has absolute
value at least $1-3\sup I$.  The weight $\widehat\eta$ in $D_{\rm Fr}$ lies below the first positive
indicial root by a positive amount independent of $\delta\in I$.  We now use this gap to prove that the weak solution
belongs to $D_{\rm Fr}$.

\begin{thm}
\label{thm:uniform-inverse}
For $T\geq T_0$, the weak inverse defines an isomorphism
$$
 G_{\lambda,q,\delta}:
 |w|^{\widehat\eta-2}C_b^{0,\alpha}
 \longrightarrow
 D_{\rm Fr}.
$$
There are $C,N>0$, uniform for
$(q,\delta,\arg\lambda)\in \Pic^0(Y)\times I\times S^1$, such that
\begin{equation}\label{eq:uniform-inverse}
 \|G_{\lambda,q,\delta}f\|_{D_{\rm Fr}}
 \leq CT^N\|f\|_{|w|^{\widehat\eta-2}C_b^{0,\alpha}}.
\end{equation}
\end{thm}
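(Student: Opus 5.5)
The plan is to start from the weak solution $u\in\operatorname{Dom}(\mathfrak q)$ of $\mathcal Lu=f$ supplied by Lax--Milgram and the energy bound \eqref{eq:weak-solution-energy}, which rests on Proposition~\ref{prop:uniform-coercivity}. I then upgrade it to $D_{\rm Fr}$ with a polynomial loss in $T$, and finally check that $G$ is a two-sided inverse of \eqref{eq:linearized-holder-map}.

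Injectivity on $D_{\rm Fr}$ is immediate. Every section of $D_{\rm Fr}$ lies in $\operatorname{Dom}(\mathfrak q)$, as remarked after Definition~\ref{defn:holder-domain}. Moreover, $\mathcal L u\in |w|^{\widehat\eta-2}C_b^{0,\alpha}$ pairs with $\operatorname{Dom}(\mathfrak q)$, so such a $u$ is the weak solution of $\mathcal Lu=f$. Coercivity then forces $u=0$ when $f=0$. Thus everything reduces to the regularity estimate $\|u\|_{D_{\rm Fr}}\le CT^N\|f\|$.

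\emph{Regularity away from the small disks.} I split $X^\circ$ into three regions: the fixed compact interior $K$, the coordinate annuli $CT^{-1}\le|w_p|\le\varrho_p$, and the small disks $|w_p|\le CT^{-1}$. On $K$ and on the annuli I rescale each dyadic annulus $A_r$ to unit size. There the operator has scalar principal part, with lower-order coefficients bounded polynomially in $T$ by the bound on $|\Phi|^2_{h^{\rm app}}$. Interior $L^2\to C^{2,\alpha}$ Schauder/De Giorgi estimates then give
\[
r^{-\widehat\eta}\|u(r\,\cdot)\|_{C^{2,\alpha}}\lesssim T^{N_0}\bigl(\|f\|+r^{-\widehat\eta}\|u\|_{L^2(\widetilde A_r)}/r\bigr).
\]
This is not yet weighted decay: the $L^2$ bound alone gives no $r^{\widehat\eta}$ behaviour as $r\to0$. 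Decay has to come from the model analysis near the punctures.

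\emph{Model analysis in the small disks.} In the rescaled variable $\rho=3T|w_p|$ the operator is $T^2\mathcal L^{\rm mod}_{\delta,p}$, which is independent of $q$ and $\arg\lambda$. By \eqref{eq:indicial-operator},
\[
\rho^2\mathcal L^{\rm mod}=-(\rho\partial_\rho)^2+B_{\delta,p}^2+\rho^{b_I}R_{\delta,p}.
\]
I will treat this as a b-operator on $\rho\le1$. The indicial roots \eqref{eq:indicial-roots} are $0$ (a double root) on the diagonal $3\mathbb Z$-mode zero, and otherwise have modulus at least $1-3\sup I>2\widehat\eta$. The steps are as follows.
\begin{enumerate}
\item Expand $u$ in allowed Fourier modes and invert $-(\rho\partial_\rho)^2+B^2$ by variation of constants on $(0,1]$. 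This is uniform in $\delta\in I$ because of the gap.
\item Handle $\rho^{b_I}R$ perturbatively on a sufficiently small disk $\rho\le\rho_0$, with $\rho_0$ independent of parameters.
\end{enumerate}
Since $u\in\operatorname{Dom}(\mathfrak q)$ has finite Dirichlet energy, the $\log\rho$ solution $d\log\rho$ of the zero root, and all negative roots, are excluded. The conclusion is
\[
u=c_p+O(\rho^{\widehat\eta})\ \text{in }\rho^{\widehat\eta}C^{2,\alpha}_b,
\]
with $c_p$ constant, trace-free, diagonal and Hermitian. Its norm is bounded by $\|\rho^2 f\|_{\rho^{\widehat\eta}C^{0,\alpha}_b}$ plus $\|u\|_{L^\infty(\rho\sim\rho_0)}$. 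The intermediate region $\rho_0\le\rho\le 3T\varrho_p$ is a compact-in-$\rho$ part plus the annuli above, where rescaling already gives control. Converting the weight between $\rho$ and $w_p$ costs at most $T^{\widehat\eta+2}$.

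\emph{Main obstacle: an $L^\infty$ bound for $u$.} This is the step I expect to be hardest; it ties the pieces together. My first attempt will be Moser iteration for $|u|$ via Kato's inequality. Since $\Delta|u|\ge-|f|$ weakly (the $\ad_\Phi$ term has a good sign), the subsolution estimate on unit $g_c$-balls gives
\[
\|u\|_{L^\infty}\lesssim\|u\|_{L^2}+\|f\|_{L^p}\lesssim\|f\|,
\]
using \eqref{eq:weak-solution-energy} and the $L^p$ bound for $f$ already established. This is even independent of $T$. Feeding this bound into the annular Schauder estimates and the disk analysis closes the $D_{\rm Fr}$ estimate with $N$ equal to the sum of the polynomial losses above. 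Surjectivity then follows because $G f$ lies in $D_{\rm Fr}$ and solves $\mathcal L Gf=f$ distributionally, hence in the H\"older sense.
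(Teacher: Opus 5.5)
Your proposal is correct. Its architecture matches the paper's: the Lax--Milgram weak solution with the coercive energy bound, dyadic rescaled Schauder estimates with polynomially growing coefficients on the annuli $|w_p|\gtrsim T^{-1}$ and on $K$, and a Fourier-mode inversion of $-(\rho\partial_\rho)^2+B_{\delta,p}^2$ on the rescaled Toda disk using the uniform gap $1-3\sup I>\widehat\eta$.

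The genuine difference is the a priori control that starts the indicial analysis. The paper works only with $L^2$ and energy control. It uses Kato's inequality and scalar Sobolev embedding to get $\widehat u\in L^\ell$ for every $\ell<\infty$, at the cost of a factor $a=3T$. Schauder estimates turn this into the initial weight $\widehat u\in\rho^{-\epsilon}C_b^{2,\alpha}$, with $\epsilon<b_I-\widehat\eta$ and $\ell>2/\epsilon$. The negative weight excludes the negative branches, and the Dirichlet energy is then needed separately to kill $d_p\log\rho$.

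You instead observe that the good sign of $2\ad_\Phi^*\ad_\Phi$ together with Kato gives the scalar subsolution inequality $-\Delta_{g_c}|u|\le|f|$, which has no $T$-dependent coefficients. The local maximum principle on finitely many fixed $g_c$-balls (Euclidean on the cubic covers) then gives $\|u\|_{L^\infty}\le C(\|u\|_{L^2}+\|f\|_{L^p})\le C\|f\|$ uniformly in $T$. This buys three things:
\begin{enumerate}
\item Weight $0$ at once, so b-Schauder puts $\widehat u$ in $C_b^{2,\alpha}$ and boundedness alone excludes both the negative and the logarithmic branches.
\item The annular estimates can use $\|u\|_{L^\infty}$ in place of $r^{-1}\|u\|_{L^2(A_r')}$.
\item The power $N$ drops by one.
\end{enumerate}
The price is running the Moser/De Giorgi estimate across the cone points. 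You need $|u|\in W^{1,2}$ on the lifted disk and the subsolution inequality to extend across the centre. Both hold because a point has zero capacity in dimension two; the paper's logarithmic cutoffs do this.

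Two small points. First, the energy-based exclusion of roots in your disk step is only rigorous once some a priori weight is known, so the $L^\infty$ bound must be established before that step; it plays exactly the role of the paper's $\rho^{-\epsilon}$ step. Second, your worry that the $L^2$ bound gives no weighted control on the annuli is overly pessimistic. For $r\geq\rho_0/(4a)$ one has $r^{-1-\widehat\eta}\le Ca^{1+\widehat\eta}$, which is polynomial in $T$, and this is how the paper closes that region.
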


\begin{proof}
Let $u=G_{\lambda,q,\delta}f$,
$F=\|f\|_{|w|^{\widehat\eta-2}C_b^{0,\alpha}}$, and $a=3T$.

\emph{Rescaling and the weak estimate.}
Choose a fixed small $\rho_0>0$.  For large $T$, the approximate
metric agrees with the local model on
$|w_p|<4\rho_0/a$.  In the normalized coordinate, the equation becomes
$\mathcal L_{\delta,p}^{\rm mod}\widehat u=\widehat f$, where
$\widehat u(\widehat w)=u(w_p)$ and
$\widehat f(\widehat w)=a^{-2}f(w_p)$.  Hence,
$$
 \|\widehat f\|_{\rho^{\widehat\eta-2}C_b^{0,\alpha}}
 \leq Ca^{-\widehat\eta}F,
 \qquad
 \|\widehat u\|_{L^2(\rho<4\rho_0)}
 \leq Ca\|u\|_{L^2}.
$$
The covariant Dirichlet integral is invariant under this change of
scale.  Kato's inequality and the scalar Sobolev embedding on the
fixed disk therefore give, for every $1<\ell<\infty$,
$$
 \|\widehat u\|_{L^\ell(\rho<3\rho_0)}
 \leq C_\ell\bigl(\mathfrak q_{\lambda,q,\delta}(u,u)^{1/2}
                    +a\|u\|_{L^2}\bigr).
$$
The scalar norm extends across the center as a $W^{1,2}$ function:
this follows by applying Kato's inequality to the compactly supported
approximants converging in the norm of
$\operatorname{Dom}(\mathfrak q_{\lambda,q,\delta})$.

\emph{An initial negative weight.}
Choose
$0<\epsilon<b_I-\widehat\eta$ and
$\ell>2/\epsilon$.  For $0<r<\rho_0/2$, rescale the annulus
$r<\rho<2r$ and its enlargement $r/2<\rho<3r$ to fixed
annuli.  The coefficients of the rescaled model are uniformly
bounded, so interior elliptic and Schauder estimates give
$$
 \|\widehat u\|_{C_b^{2,\alpha}(r<\rho<2r)}
 \leq C\left(a^{-\widehat\eta}r^{\widehat\eta}F
       +r^{-2/\ell}\|\widehat u\|_{L^\ell(\rho<3\rho_0)}\right).
$$
It follows that
$$
 \|\widehat u\|_{\rho^{-\epsilon}C_b^{2,\alpha}(\rho<\rho_0)}
 \leq C\left(\mathfrak q_{\lambda,q,\delta}(u,u)^{1/2}
                    +a\|u\|_{L^2}+a^{-\widehat\eta}F\right).
$$

\emph{The constant diagonal term and the weighted remainder.}
Denote the indicial operator by
$P^{\rm ind}=-(\rho\partial_\rho)^2+B_{\delta,p}^2$.
The equation and $b_I-\epsilon>\widehat\eta$ show that
$$
 P^{\rm ind}\widehat u
 =\rho^2\widehat f-\rho^{b_I}R_{\delta,p}\widehat u
 \in\rho^{\widehat\eta}C_b^{0,\alpha}.
$$
Insert a cutoff $\chi$ supported in $\rho<\rho_0$ and equal to one
on $\rho<\rho_0/2$.  Its commutator is supported in a fixed annulus
and is controlled by the preceding estimate.  Put $s=\log\rho$ and
$g=P^{\rm ind}(\chi\widehat u)$.  The required inverse on the
cylinder can be described explicitly.  If $g_\mu$ is the component
with angular eigenvalue $\mu\ne0$, set
$$
 v_\mu(s)=\frac1{2|\mu|}
   \int_{\R}e^{-|\mu||s-t|}g_\mu(t)\,dt.
$$
Since $|\mu|\geq1-3\sup I>\widehat\eta$, its weighted bound is
$$
 \|e^{-\widehat\eta s}v_\mu\|_{L^\infty}
 \leq\frac{\|e^{-\widehat\eta s}g_\mu\|_{L^\infty}}
                  {\mu^2-\widehat\eta^2}.
$$
On the constant diagonal component the corresponding solution is
$$
 v_0(s)=-\int_{-\infty}^s(s-t)g_0(t)\,dt,\qquad
 \|e^{-\widehat\eta s}v_0\|_{L^\infty}
 \leq\widehat\eta^{-2}
              \|e^{-\widehat\eta s}g_0\|_{L^\infty}.
$$
The nonzero eigenvalues grow linearly with the Fourier frequency,
so the preceding inverse bounds are summable, uniformly in $\delta$.
They give a weighted $C^0$ bound for $v$.  Applying interior Schauder
estimates to $e^{-\widehat\eta s}v$ on unit cylinder strips then gives
$$
 \|v\|_{\rho^{\widehat\eta}C_b^{2,\alpha}}
 \leq C\|g\|_{\rho^{\widehat\eta}C_b^{0,\alpha}}.
$$
These Green operators realize the indicial inverse with Mellin
multiplier $(\mu^2-\sigma^2)^{-1}$ on
$\Re\sigma=\widehat\eta$; compare \cite{Maz91}.
For the homogeneous difference $\chi\widehat u-v$, the initial
weight $-\epsilon$ excludes negative nonzero branches at
$s=-\infty$, while the cutoff and the Green formulas exclude
positive nonzero branches at $s=+\infty$.  The remaining terms are
constant and logarithmic diagonal terms.  Consequently
$$
 \widehat u=c_p+d_p\log\rho+v_p,
 \qquad v_p\in\rho^{\widehat\eta}C_b^{2,\alpha},
$$
where $c_p,d_p$ are constant trace-free diagonal Hermitian matrices.
The coefficient and remainder norms are bounded by
$C(\mathfrak q_{\lambda,q,\delta}(u,u)^{1/2}
+a\|u\|_{L^2}+a^{-\widehat\eta}F)$.
The coefficient bound follows by evaluating the homogeneous difference
and its first radial derivative on a fixed annulus.  The logarithmic
term contributes $|d_p|^2\int_0^{\rho_0/2}\rho^{-1}\,d\rho$ to the
Dirichlet integral, while the remainder has finite energy.  Thus
$d_p=0$.  Scaling back multiplies the weighted remainder norm by
$a^{\widehat\eta}$ and leaves $c_p$ unchanged.  Using
\eqref{eq:weak-solution-energy} yields
$$
 |c_p|+
 \|u-c_p\|_{|w_p|^{\widehat\eta}C_b^{2,\alpha}
                (|w_p|<\rho_0/(2a))}
 \leq Ca^{1+\widehat\eta}F.
$$

\emph{The remaining annuli and the compact interior.}
It remains to estimate $u$ outside these disks.  On an annulus
$A_r=\{r<|w_p|<2r\}$, with $r\geq\rho_0/(4a)$, rescale
$w_p=r\zeta$.  The principal part of $r^2\mathcal L$ is the
fixed scalar Laplacian; the lower-order coefficient norms needed for
Schauder estimates grow at most polynomially in $T$.  This follows
from the uniform model bounds for $a|w_p|\geq\rho_0/8$ and the
fixed cutoffs defining $h^{\rm app}$.  To track this dependence, put
$U(\zeta)=u(r\zeta)$.  On fixed nested annuli the equation has the
form
$$
 -\Delta_\zeta U+\sum_{j=1}^2 a_j\partial_jU+a_0U
      =r^2f(r\zeta),\qquad
 \sum_{j=0}^2\|a_j\|_{C^{0,\alpha}}\leq CT^{m_0},
$$
for one fixed $m_0$.  The coefficients act on the endomorphism
components.  On each annulus, interpolation gives
$$
 \|U\|_{C^{1,\alpha}}
 \leq\varepsilon\|U\|_{C^{2,\alpha}}
       +C\varepsilon^{-k_0}\|U\|_{L^2},\qquad 0<\varepsilon<1,
$$
with a fixed finite $k_0$.  On the rescaled coordinate plane, let
$A_s=\{1-s<|\zeta|<2+s\}$ for $0\leq s\leq1/4$.
Combining interpolation with the interior estimate for the scalar
Laplacian gives, for $0\leq s<t\leq1/4$,
$$
 \|U\|_{C^{2,\alpha}(A_s)}
 \leq\theta\|U\|_{C^{2,\alpha}(A_t)}
   +CT^M(t-s)^{-L}
    \bigl(\|r^2f(r\,\cdot)\|_{C^{0,\alpha}(A_{1/4})}
                         +\|U\|_{L^2(A_{1/4})}\bigr).
$$
Here $M,L$ are fixed, and the interpolation parameter is chosen as a
small multiple of $T^{-m_0}$ times a power of $t-s$, so that
$0<\theta<2^{-L}$.  Iterating with
$s_j=\tfrac14(1-2^{-j})$ absorbs the first term because
$\sum_j\theta^j2^{jL}<\infty$.  The closure of $A_{1/4}$ lies inside
$\{1/2<|\zeta|<4\}$, so the terminal norm is finite by interior
regularity.  The resulting constants are polynomial
in $T$.  Finally,
$\|r^2f(r\,\cdot)\|_{C^{0,\alpha}}\leq Cr^{\widehat\eta}F$
and $\|U\|_{L^2}=r^{-1}\|u\|_{L^2}$.  We obtain
$$
 \|u\|_{C_b^{2,\alpha}(A_r)}
 \leq CT^M\left(r^{\widehat\eta}F
       +r^{-1}\|u\|_{L^2(A'_r)}\right),
$$
where $A'_r=\{r/2<|w_p|<4r\}$.
The same argument applies on the fixed compact region.  Since
$r^{-1-\widehat\eta}\leq Ca^{1+\widehat\eta}$, the energy
estimate bounds the weighted remainder on these annuli by a finite
power of $T$ times $F$.  Subtracting $\psi_pc_p$ has the same
bound.  Taking the supremum over the annuli and combining it with the
estimates on the inner disks proves the $D_{\rm Fr}$ estimate
\eqref{eq:uniform-inverse}.  Uniqueness in
$\operatorname{Dom}(\mathfrak q_{\lambda,q,\delta})$ proves
injectivity of \eqref{eq:linearized-holder-map}; the weak construction
and weighted estimates prove surjectivity.  All constructions preserve
the deck-equivariance condition, and their constants are uniform on
the compact parameter sets.
\end{proof}

\subsection{Correction to a harmonic metric}

The norm \eqref{eq:holder-domain-norm} on $D_{\rm Fr}$ controls the
constant diagonal terms $c_p$, the
$|w_p|^{\widehat\eta}C_b^{2,\alpha}$ remainders, and the
$L^\infty$ norm.  We solve the nonlinear equation on a small ball
in $D_{\rm Fr}$, with radius determined by the norm of the residual
in $|w|^{\widehat\eta-2}C_b^{0,\alpha}$.
For fixed parameters set
$$
 Q(\gamma)=\mathcal F(\gamma)-R-\mathcal L\gamma.
$$
The local formula for the curvature of $h^{\rm app}e^{2\gamma}$
has the structure described in \cite[Proposition~6.1 and
equation~(6.1)]{FMSW22}.  The following estimate records the weights
needed for our cyclic model.

\begin{lem}\label{lem:nonlinear-mapping}
There are $r_*,C,M>0$, uniform on $\Pic^0(Y)\times I\times S^1$, such that
\begin{equation}\label{eq:nonlinear-estimate}
 \begin{aligned}
 &\|Q(\gamma_1)-Q(\gamma_2)\|_{|w|^{\widehat\eta-2}C_b^{0,\alpha}}\\
 &\qquad\leq CT^M
  (\|\gamma_1\|_{D_{\rm Fr}}+\|\gamma_2\|_{D_{\rm Fr}})
  \|\gamma_1-\gamma_2\|_{D_{\rm Fr}}
 \end{aligned}
\end{equation}
for $T\geq1$ and $\gamma_j\in D_{\rm Fr}$ with
$\|\gamma_j\|_{D_{\rm Fr}}\leq r_*$.  On this ball,
$$
 Q:\{\gamma\in D_{\rm Fr}:\|\gamma\|_{D_{\rm Fr}}<r_*\}
       \longrightarrow |w|^{\widehat\eta-2}C_b^{0,\alpha}
$$
satisfies $Q(0)=DQ(0)=0$.  In the parameter identifications constructed
below, this map is smooth in $(\lambda,q,\gamma)$ for fixed
$\delta$ and continuous in $\delta$ in these Banach norms.
\end{lem}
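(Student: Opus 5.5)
I would write $Q$ as the explicit second-order Taylor remainder of $\mathcal F$ around $\gamma=0$ and bound the remainder using the weighted structure of $D_{\rm Fr}$.

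\emph{Expansion of $\mathcal F$.} Following \cite[equation~(6.1)]{FMSW22}, write $h_\gamma=h^{\rm app}e^{2\gamma}$. The Chern connection of $h_\gamma$, conjugated by $e^\gamma$, is $d_A$ plus terms of the form $\Psi_1(\ad\gamma)(d_A\gamma)$. Here $\Psi_1$ is an entire function of $\ad\gamma$ with $\Psi_1(0)=\Id$. Similarly, $\operatorname{Ad}(e^\gamma)[\Phi,\Phi^{\dagger_{h_\gamma}}]$ equals $[\Phi,\Phi^\dagger]$ plus $\Psi_2(\ad\gamma)$ applied to $\ad_\Phi\gamma$ and $\ad_{\Phi^\dagger}\gamma$. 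In total
$$
 Q(\gamma)=\mathcal B_1(\gamma;d_A\gamma,d_A\gamma)+\mathcal B_2(\gamma;\nabla_A^2\gamma)+\mathcal B_3(\gamma;[\Phi,\gamma],[\Phi^\dagger,\gamma])+\mathcal B_4(\gamma;R).
$$
Each $\mathcal B_i$ is at least quadratic in $\gamma$ and its derivatives, with coefficients that are real-analytic functions of $\gamma$ pointwise. The $R$-dependent terms are harmless, since $R$ is supported on a fixed annulus by Proposition~\ref{prop:residual-decay}. Once $\mathcal F(0)=R$ and $D\mathcal F(0)=\mathcal L$ are in place (Lemma~\ref{lem:fixed-linearization}), the relations $Q(0)=DQ(0)=0$ are immediate.

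\emph{Weighted estimates.} Near each puncture, split $\gamma=\psi_pc_p+\widetilde\gamma$ with $\widetilde\gamma\in|w|^{\widehat\eta}C_b^{2,\alpha}$. The constant diagonal part commutes with the diagonal model connection, and its $b$-derivatives vanish near $p$. The derivative terms therefore involve only $\widetilde\gamma$. In $b$-language, $d_A\gamma$ and $\nabla^2\gamma$ lie in $|w|^{\widehat\eta-1}$ and $|w|^{\widehat\eta-2}$ spaces respectively. Their products with bounded coefficients land in $|w|^{\widehat\eta-2}C_b^{0,\alpha}$, because the extra factor $|w|^{\widehat\eta}$ only improves the estimate. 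This uses that $C_b^{k,\alpha}$ spaces are algebras.

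The Higgs terms are the delicate part. The commutator $[\Phi,\psi_pc_p]$ has $|\Phi|\lesssim T^{1-3\delta_p}|w|^{-3\delta_p}$, which is the same coefficient estimate used for \eqref{eq:linearized-holder-map}. The resulting quadratic term is then bounded by $T^{2}|w|^{-6\delta_p}\|\gamma\|^2$ in the $C_b^{0,\alpha}$ sense. This lies in $|w|^{\widehat\eta-2}C_b^{0,\alpha}$ because $2-6\sup I-\widehat\eta>0$.

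\emph{Polynomial $T$-dependence.} The polynomial growth in $T$ comes from the $b$-derivatives of the model coefficients on $|w_p|\sim T^{-1}$. Rescaling by $\rho=3T|w|$ loses at most a fixed power of $T$, exactly as in the annulus estimates in the proof of Theorem~\ref{thm:uniform-inverse}. The smallness radius $r_*$ is chosen so that $\|\gamma\|_{L^\infty}\le C\|\gamma\|_{D_{\rm Fr}}<1$, which keeps the analytic functions $\Psi_i(\ad\gamma)$ bounded with all derivatives. The Lipschitz estimate \eqref{eq:nonlinear-estimate} then follows from the mean value inequality applied to $DQ(\gamma)$. This derivative is linear in the perturbation and has coefficients vanishing to first order in $\gamma$.

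\emph{Smoothness in parameters and main obstacle.} Smoothness in $(\lambda,q,\gamma)$ follows from smooth dependence of $h^{\rm app}$ and $\Phi_{\lambda,q}$ on the parameters in local trivializations. Continuity in $\delta$ follows from the continuity statement in Proposition~\ref{prop:toda-decay} in the norm \eqref{eq:toda-solution-norm}. I expect the main obstacle to be a $\delta$-dependence issue: the $b$-Hölder frames themselves vary with $\delta$ through $|w|^{\gamma^\delta}$. I would handle this by conjugating all spaces to the fixed cyclic frame, in which the weights enter only through coefficients bounded by $\rho^{-6\delta}(1+|\log\rho|)$. The positive gap then absorbs these coefficients uniformly on $I$.
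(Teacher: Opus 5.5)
\textbf{Where your proposal agrees with the paper.} Your estimate of $Q$ is essentially the paper's argument. It uses:
\begin{itemize}
\item the splitting $\gamma=\psi_pc_p+\widetilde\gamma$ near each puncture;
\item the fact that the diagonal model connection commutes with $c_p$, so the differentiated terms land in $|w|^{\widehat\eta-2}C_b^{0,\alpha}$ and $|w|^{2\widehat\eta-2}C_b^{0,\alpha}$;
\item the Higgs coefficient bound $C(T^2+T^{2-6\delta}|w|^{-6\delta})$, absorbed by the margin $2-6\sup I-\widehat\eta$;
\item the mean-value formula on a ball where $\|\gamma\|_{L^\infty}$ is small.
\end{itemize}
One minor correction: the factor $T^M$ comes from $\Phi_{\lambda,q}=3\lambda P\,dw_p$. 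It does not come from $b$-derivatives of the model coefficients, which are scale invariant and uniformly bounded. This does not affect your conclusion.

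\textbf{The gap: parameter dependence.} The last clause of the lemma requires you to construct the parameter identifications, and the one you propose for $\delta$ does not work. The spaces $D_{\rm Fr}$ and $|w|^{\widehat\eta-2}C_b^{0,\alpha}$ are defined by two things: coefficient matrices in the $h^{\rm app}$-unitary frames $e^{-u_j^\delta}f_j$, and $h^{\rm app}$-self-adjointness.

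Suppose you identify endomorphisms for $\delta$ and $\delta'$ by equal matrices in the fixed holomorphic frame $f_j$. Then the unitary-frame coefficients transform as $M'_{ij}=e^{(u'_i-u_i)-(u'_j-u_j)}M_{ij}$. This factor is roughly $\rho^{3(\delta'-\delta)}$ on the entries $(2,0),(2,1)$ and $\rho^{-3(\delta'-\delta)}$ on the entries $(0,2),(1,2)$. Two things go wrong:
\begin{itemize}
\item the image of a Hermitian matrix is no longer Hermitian;
\item the entries carrying the negative power leave $|w|^{\widehat\eta}C_b^{2,\alpha}$.
\end{itemize}
This factor acts on the unknown itself, not on a coefficient multiplied by a positive power of $|w|$, so no positive margin can absorb it. So in that frame it is false that ``the weights enter only through coefficients'': the Banach spaces themselves move with $\delta$. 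The same issue arises, more mildly, for the $(\lambda,q)$-identifications, since $h^{\rm app}$ depends on $T$ through $u_j(3T|w|)$. Writing ``local trivializations'' does not by itself identify the metric-dependent spaces.

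\textbf{The paper's fix.}
\begin{itemize}
\item Take determinant-preserving smooth identifications that match the holomorphic cyclic frames near the punctures.
\item Compose them with $e^{-s}$, where $h_2=h_1e^{2s}$. This map is an isometry and sends $h_1$-unitary frames to $h_2$-unitary frames.
\item Domain and target then become literally the same spaces of coefficient matrices: Hermitian, trace-free, with the same constant diagonal part, deck-equivariance condition and weights.
\item All $\delta$-dependence then sits in the operator coefficients $\rho u_j'$ and $e^{u_i-u_j}$. Their variation is controlled by $\sup_{0<r\le1}|r^{\beta_1}-r^{\beta_2}|\le|\beta_1-\beta_2|/(e\beta_*)$, where $\beta_*$ is the relevant positive margin.
\end{itemize}
That last step is where your $\rho^{-6\delta}(1+|\log\rho|)$ heuristic genuinely applies.
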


\begin{proof}
The $D_{\rm Fr}$ norm controls $\|\gamma\|_{L^\infty}$ uniformly, so the
matrix exponential and its derivatives are uniformly bounded on a
fixed ball.  After the constant and linear terms have been removed,
the differentiated terms of $Q$ have the form of a coefficient
vanishing at zero times $\nabla_A^2\gamma$, or a smooth coefficient
times $(\nabla_A\gamma)^2$.  Near a puncture write $\gamma=c+\widetilde\gamma$.
The connection is diagonal and commutes with $c$; the differentiated
terms belong to $|w_p|^{\widehat\eta-2}C_b^{0,\alpha}$ and
$|w_p|^{2\widehat\eta-2}C_b^{0,\alpha}$, respectively.  The second
space embeds continuously in the first because $\widehat\eta>0$.

The terms involving the Higgs field are at least quadratic in
$\gamma$.  At $p=0$ the Toda asymptotics give, for $\rho\leq1$,
$|\Phi|_{h^{\rm app}}^2\leq CT^2\rho^{-6\delta}$, and for
$\rho\geq1$ the metric is uniformly equivalent to the decoupled
metric.  Thus the most singular contribution from the constant
diagonal terms is bounded by
$$
 C\bigl(T^2+T^{2-6\delta}|w|^{-6\delta}\bigr)
 (|c_1|+|c_2|)|c_1-c_2|.
$$
Multiplication by $|w|^{2-\widehat\eta}$ leaves the positive exponent
$2-6\delta-\widehat\eta$, uniformly bounded away from zero on $I$.
At $p=1,\infty$ the logarithmic exponents vanish.  Terms containing
the remainder gain the positive weight $\widehat\eta$.
Weighted H\"older multiplication and the mean-value formula now give
the estimate.  On the compact interior ordinary multiplication gives
the same conclusion.  All background coefficient bounds have at most
polynomial growth in $T$.

We compare $D_{\rm Fr}$ and $|w|^{\widehat\eta-2}C_b^{0,\alpha}$
for nearby parameters by choosing determinant-preserving smooth
identifications of the bundles $E_q$ over a small neighborhood in
$\Pic^0(Y)$.  The smooth family $\mathscr E$ admits such
identifications.  We choose them to identify the cyclic holomorphic
frames near each puncture and extend them over the compact interior.
We then compose with the inverse square root of the relative metric
endomorphism.  More explicitly, for two compared metrics
$h_2=h_1e^{2s}$, the map $e^{-s}$ is an isometry from $h_1$ to $h_2$;
both metrics have determinant one, so $\tr s=0$.
In the associated unitary frames the identifications preserve constant
diagonal matrices and the spaces
$|w_p|^{\widehat\eta}C_b^{2,\alpha}$ and
$|w_p|^{\widehat\eta-2}C_b^{0,\alpha}$.
The coefficients depend smoothly on $(\lambda,q)$ for fixed $\delta$
by the spatial Toda estimates and the smooth Hermitian metric on
$\mathscr P$.  They are continuous in $\delta$ in the operator norms from
$D_{\rm Fr}$ to $|w|^{\widehat\eta-2}C_b^{0,\alpha}$: the positive margins in the conormal remainder and
in $2-6\sup I-\widehat\eta$ absorb the variation of powers of $|w|$.
For example, if $\beta_1,\beta_2\geq\beta_*>0$, then
$$
 \sup_{0<r\leq1}|r^{\beta_1}-r^{\beta_2}|
 \leq |\beta_1-\beta_2|
             \sup_{0<r\leq1}r^{\beta_*}|\log r|
 =\frac{|\beta_1-\beta_2|}{e\beta_*}.
$$
The same argument applies after any fixed number of $b$-derivatives;
one additional derivative controls the local H\"older seminorm.
On the compact interior this is ordinary smooth parameter dependence.
The local formulas for $\mathcal F$ and weighted multiplication prove
the asserted regularity.
\end{proof}

We now apply the contraction mapping theorem.
\begin{prop}\label{prop:correction}
For $T$ sufficiently large there is a unique normalized adapted
harmonic metric
\begin{equation}\label{eq:harmonic-correction}
 \begin{aligned}
 h_{\lambda,q,\delta}
 &=h_{\lambda,q,\delta}^{\rm app}e^{2\gamma_{\lambda,q,\delta}},\\
 \|\gamma_{\lambda,q,\delta}\|_{D_{\rm Fr}}
 &\leq C_Ie^{-c_IT}.
 \end{aligned}
\end{equation}
In the parameter charts constructed in the proof of
Lemma~\ref{lem:nonlinear-mapping}, the correction depends smoothly on
$(\lambda,q)$ for fixed $\delta$ and continuously on $\delta$ in the
$D_{\rm Fr}$-norm.  It is compatible with the spectral quotient.
The estimate is uniform on $\Pic^0(Y)\times I\times S^1$.
\end{prop}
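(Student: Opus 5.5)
We rewrite $\mathcal F(\gamma)=0$ as a fixed-point problem on $D_{\rm Fr}$ and apply the contraction mapping theorem. Since $\mathcal F(\gamma)=R+\mathcal L\gamma+Q(\gamma)$, the equation $\mathcal F(\gamma)=0$ is equivalent, by Theorem~\ref{thm:uniform-inverse}, to
$$
 \gamma=\mathcal T(\gamma):=-G_{\lambda,q,\delta}\bigl(R_{\lambda,q,\delta}+Q(\gamma)\bigr),
$$
and $\mathcal T$ maps $D_{\rm Fr}$ into itself. Consider the ball $B_\varepsilon=\{\|\gamma\|_{D_{\rm Fr}}\leq\varepsilon\}$ with $\varepsilon=2CT^N\|R\|_{|w|^{\widehat\eta-2}C_b^{0,\alpha}}$. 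Proposition~\ref{prop:residual-decay} with $k=0$ gives $\varepsilon\leq C_IT^Ne^{-c_IT}$, which is smaller than $r_*$ and smaller than a multiple of $T^{-N-M}$ once $T\geq T_I$. Combining Lemma~\ref{lem:nonlinear-mapping} with \eqref{eq:uniform-inverse} and $Q(0)=0$ yields
$$
 \|\mathcal T\gamma_1-\mathcal T\gamma_2\|_{D_{\rm Fr}}\leq C^2T^{N+M}\cdot2\varepsilon\,\|\gamma_1-\gamma_2\|_{D_{\rm Fr}}\leq\tfrac12\|\gamma_1-\gamma_2\|_{D_{\rm Fr}},
 \qquad
 \|\mathcal T\gamma\|_{D_{\rm Fr}}\leq\tfrac{\varepsilon}{2}+\tfrac12\varepsilon .
$$
So $\mathcal T$ is a contraction of $B_\varepsilon$. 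Its fixed point $\gamma$ satisfies $\|\gamma\|_{D_{\rm Fr}}\leq C_Ie^{-c_I'T}$ for any $c_I'<c_I$, because the polynomial factor is absorbed by shrinking the rate. All constants are uniform on $\Pic^0(Y)\times I\times S^1$, since every input estimate is.

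Next we check that $h=h^{\rm app}e^{2\gamma}$ is the normalized adapted harmonic metric. Because $\gamma$ is trace-free, $\det h=1$. Because $\gamma\in D_{\rm Fr}\subset L^\infty$, the metric $h$ is mutually bounded with $h^{\rm app}$ and therefore adapted to the same weights. The constant diagonal terms $c_p$ commute with the diagonal weight model and change nothing in the growth filtration. Since $\mathcal F(\gamma)=0$ and conjugation by $e^\gamma$ is invertible, Hitchin's equation \eqref{eq:hitchin-equations} holds. For uniqueness, the stable object has a unique adapted harmonic metric with the determinant normalization by \cite{Simpson1990}, so $h$ is that metric. In particular the fixed point is the unique small solution.

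For parameter dependence, we use the local trivializations from the proof of Lemma~\ref{lem:nonlinear-mapping}. In these charts $R$, $\mathcal L$ and $Q$ depend smoothly on $(\lambda,q,\gamma)$ for fixed $\delta$ and continuously on $\delta$. The linearization of $\gamma\mapsto\mathcal F(\gamma)$ at the fixed point is $\mathcal L+DQ(\gamma)$, which is a small perturbation of the isomorphism $\mathcal L$ and hence invertible. The implicit function theorem then gives smooth dependence on $(\lambda,q)$. Continuity in $\delta$ follows from the uniform contraction estimate: the fixed point of a family of uniform contractions depending continuously on a parameter varies continuously. Alternatively, subtract the two fixed-point equations and use continuity of $G$, which follows from continuity of $\mathcal L$ and the uniform bound on its inverse.

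For compatibility with the quotient action, the isomorphisms $\Xi_{\lambda,q}$ of Proposition~\ref{prop:spectral-quotient} preserve $h^{\rm dec}_q$ by Lemma~\ref{lem:decoupled-pair}. They also preserve the Toda model, since it is independent of $\arg\lambda$ and deck-invariant by Theorem~\ref{thm:toda}. Hence they intertwine $h^{\rm app}$ and the operators $\mathcal F$. Pulling back the fixed point gives another small solution, so uniqueness of the fixed point (or of the harmonic metric) gives equivariance.

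The main point needing care is the balance between the polynomial loss $T^{N+M}$ and the exponentially small residual. The contraction radius must be small against $T^{-N-M}$ and below $r_*$ simultaneously, while the resulting bound must still be exponential. This works because Proposition~\ref{prop:residual-decay} gives genuinely exponential smallness in the source norm $|w|^{\widehat\eta-2}C_b^{0,\alpha}$, not only in $L^\infty$.
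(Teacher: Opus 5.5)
Your proposal is correct and follows the paper's proof essentially step for step. It uses the contraction $\gamma\mapsto-G(R+Q(\gamma))$ on a ball of radius comparable to $T^N\|R\|$, lets the exponential residual absorb the polynomial factor $T^{N+M}$, applies the implicit function theorem via invertibility of $\mathcal L(\Id+G\,DQ(\gamma))$ for smoothness in $(\lambda,q)$, uses continuity of the contraction in $\delta$, and invokes uniqueness of the normalized adapted harmonic metric for both the identification and the quotient compatibility. The differences are cosmetic: the paper cites \cite[Proposition~5.2]{Mochizuki} rather than \cite{Simpson1990} for uniqueness, and it explicitly invokes interior elliptic regularity to make $h$ smooth on $X^\circ$.
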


\begin{proof}
The operator norm of
$G:|w|^{\widehat\eta-2}C_b^{0,\alpha}\to D_{\rm Fr}$ and
the residual satisfy
$$
 \|G\|\leq C_GT^N,\qquad
 \|R\|_{|w|^{\widehat\eta-2}C_b^{0,\alpha}}\leq C_Re^{-c_RT},
$$
We set $r_T=2C_GC_RT^Ne^{-c_RT}$.  For large $T$, the map
$$
 \gamma\longmapsto-G\bigl(R+Q(\gamma)\bigr)
$$
is defined on
$\{\gamma\in D_{\rm Fr}:\|\gamma\|_{D_{\rm Fr}}\leq r_T\}$.
The preceding lemma gives
$$
 \|G(R+Q(\gamma))\|_{D_{\rm Fr}}
 \leq\frac{r_T}{2}+C_GCT^{N+M}r_T^2
 \leq\frac{3r_T}{4},
$$
after increasing $T_0$.  Its Lipschitz constant is at most
$2C_GCT^{N+M}r_T<1/2$.  Thus it preserves this ball.
It has a unique fixed point, and any $0<c_I<c_R$ absorbs the
polynomial in its norm bound.  At that point
$$
 D_\gamma\mathcal F
 =\mathcal L\bigl(\Id+GDQ(\gamma)\bigr)
$$
is invertible from $D_{\rm Fr}$ to
$|w|^{\widehat\eta-2}C_b^{0,\alpha}$ by the same estimate.
The implicit function theorem, using the parameter identifications of
Lemma~\ref{lem:nonlinear-mapping}, gives smoothness in $(\lambda,q)$;
continuous dependence of the contraction gives continuity in
$\delta$.

The metric $h=h^{\rm app}e^{2\gamma}$ is mutually bounded with
$h^{\rm app}$ and hence adapted to the same filtration.  It has
determinant one, and $\mathcal F(\gamma)=0$ is Hitchin's equation.
Interior elliptic regularity makes it smooth on $X^\circ$.
Stability from Proposition~\ref{prop:spectral} and uniqueness of the
normalized adapted harmonic metric
\cite[Proposition~5.2]{Mochizuki} identify it with the metric defining
the Hitchin $L^2$ metric.  This also identifies the constructions on
parameter overlaps.  The decoupled metric, radial Toda interpolation,
and cutoffs are compatible with the isomorphisms of
Proposition~\ref{prop:spectral-quotient}; uniqueness gives the same
compatibility for $h$ and the self-adjoint endomorphism $\gamma$.
\end{proof}

This proves Theorem~\ref{thm:intro-construction}.  The metric comparison
will use the uniform $L^\infty$ bound on $\gamma$, the residual bound,
and the form coercivity established in this section.

\section{Comparison of the Hitchin and semi-flat metrics}
\label{sec:metric-comparison}

Fix $I\Subset(0,1/3)$.  We construct representatives of the horizontal
and vertical tangent classes, compare their pairings, and then project
them to the harmonic representatives for the exact metric.  This follows
the method of \cite[Section~4]{Mochizuki23}; the estimates below justify
the integrations by parts at the parabolic points.  We abbreviate
$$
 \Phi_T=\Phi_{\lambda,q},\qquad
 h_T^{\rm app}=h_{\lambda,q,\delta}^{\rm app},\qquad
 h_T=h_{\lambda,q,\delta},\qquad
 \gamma_T=\gamma_{\lambda,q,\delta}.
$$
Proposition~\ref{prop:correction} identifies $h_T$ with the normalized
harmonic metric defining $g_{L^2,\delta}$.  Constants are uniform for
$\delta\in I$ and $(\arg\lambda,q)\in S^1\times\Pic^0(Y)$.

On the fixed elliptic spectral curve $\pi:Y\to X$, write $M_f$ for
direct-image multiplication by a scalar form $f$ on $Y$.
On the unramified locus,
$$
 \pi^*M_f=\diag(f,\sigma^*f,\sigma^{2*}f),
 \qquad
 \mathbf F_\nu=(0,M_\nu),\qquad
 \mathbf F_\tau=(M_\tau,0).
$$
Here $\nu\in H^0(Y,K_Y)$ and $\tau$ is a harmonic $(0,1)$-form.
Let $\theta$ denote the tautological differential on the spectral
curve.  The spectral identifications $f_\lambda$ trivialize the homology local
system.  Holding $q\in\Pic^0(Y)$ fixed therefore
gives the Gauss--Manin horizontal lift, and differentiating
$f_\lambda^*\theta=\lambda\alpha$ gives
$\nu=\dot\lambda\alpha$.  At fixed $\lambda$, differentiation of the
family $L_q=L_0\otimes P_q$ gives the vertical class $\tau$.  Thus the product
splitting under $d\widetilde\Psi_\delta$ is the Gauss--Manin
horizontal--vertical splitting.  The corresponding deformation classes
are represented on the unramified locus by $\mathbf F_\nu$ and
$\mathbf F_\tau$.  Both have vanishing direct-image trace.

For a gauge section $s$ and a deformation pair $\mathbf x=(a_x,\varphi_x)$,
write
$$
 D_T''s=(\bar\partial_Es,[\Phi_T,s]),\qquad
 D_T''\mathbf x=\bar\partial_E\varphi_x+[a_x,\Phi_T].
$$
For a bundle metric $h$, the deformation pairing is
\begin{equation}\label{eq:deformation-pairing}
 \langle\mathbf x,\mathbf y\rangle_h
 =2\ii\int_X\tr\left(
 \varphi_x\wedge\varphi_y^{\dagger_h}
 -a_x\wedge a_y^{\dagger_h}\right).
\end{equation}
The subscripts $\mathrm{app}$ and $\mathrm{ex}$ indicate the metrics
$h_T^{\rm app}$ and $h_T$, respectively, and
$$
 Q_\bullet(s,r)=\langle D_T''s,D_T''r\rangle_\bullet,
 \qquad \bullet\in\{\mathrm{app},\mathrm{ex}\}.
$$
The pairing is complex linear in its first argument.  The components
of a deformation are always ordered as $(a,\varphi)$, with types
$(0,1)$ and $(1,0)$, respectively.  The real part of
\eqref{eq:deformation-pairing} is the $L^2$ inner product on these
pairs.  For $h=h_T$, its restriction to harmonic representatives
gives $g_{L^2,\delta}$.  The operator $D_T''$ depends on the
holomorphic Higgs bundle, while its formal adjoint and the pairing
depend on the chosen bundle metric.
Let $\mathcal Q$ be the completion of
$C_c^\infty(X\setminus D,\operatorname{End}_0E)$ in the norm
\begin{equation}\label{eq:coulomb-form-domain}
 \|r\|_{\mathcal Q,{\rm app}}^2
 =\|r\|_{L^2,{\rm app}}^2+Q_{\rm app}(r,r).
\end{equation}
The completion $\mathcal Q$ is a Hilbert space of trace-free complex
endomorphisms, and $D_T''$ maps it continuously to
$$
 L^2\bigl(X^\circ,
  (\Lambda^{0,1}\oplus\Lambda^{1,0})\otimes\operatorname{End}_0E\bigr),
$$
with the metric $h_T^{\rm app}$ and the pairing
\eqref{eq:deformation-pairing}.
A trace-free section $u$ with bounded $h_T^{\rm app}$-norm,
local $W^{1,2}$ regularity on $X^\circ$, and $D_T''u\in L^2$
belongs to $\mathcal Q$.  Indeed, logarithmic cutoffs $\eta_\epsilon$, equal to
zero for $r\leq\epsilon^2$ and one for $r\geq\epsilon$, satisfy
$|d\eta_\epsilon|\leq C/(r|\log\epsilon|)$ and
$$
 \|(\bar\partial\eta_\epsilon)u\|_{L^2}^2
 \leq\frac{C\|u\|_{L^\infty}^2}{|\log\epsilon|}\longrightarrow0.
$$
The other integral tails tend to zero, using the finite conic volume
for zero-forms and conformal invariance for one-forms.  Approximation
on compact subsets proves the assertion.

We use the norms
$$
 \|\nu\|_Y^2=2\ii\int_Y\nu\wedge\bar\nu,
 \qquad
 \|\tau\|_Y^2=-2\ii\int_Y\tau\wedge\bar\tau.
$$
Choose a fixed $\varrho_0>0$ such that $\chi_0=1$ on
$\{|w_0|\leq\varrho_0\}$.  On this disk,
$$
 h_{0,T}=\diag(e^{2u_i(3T|w_0|)}),\qquad
 \Phi_T=3\lambda P\,dw_0.
$$
At $p=1,\infty$, let $h_{p,T}$ be the decoupled local metric.
In the local calculations, write
$$
 P_i=E_{i+1,i},\qquad P=\sum_iP_i,\qquad
 d_i(\rho)=u_{i+1}(\rho)-u_i(\rho),
 \qquad i\in\Z/3\Z.
$$
On the $w_0$-disk write $A_{\epsilon,R}=\{\epsilon<|w_0|<R\}$.
The local fiberwise adjoint for $h_{p,T}$ is denoted by $\dagger_T$;
formal adjoints also use $g_c$.  A pairing restricted to a local model
uses $h_{p,T}$ and the normalization above.  The boundary
term in the Green formula for $D_T''$ is
$$
 \mathfrak d_r^T(s,\mathbf y)
 =-\frac{2\ii}{3}\int_{|w_p|=r}\tr(s\,a_y^{\dagger_T}).
$$
The factor $1/3$ converts integration on the cubic cover to integration
on the base.  All local pairings below include this covering factor.  With both
circles oriented counterclockwise, Green's formula gives
\begin{equation}\label{eq:local-green-formula}
 \langle D_T''s,\mathbf y\rangle_{A_{\epsilon,R}}
 =\langle s,(D_T'')^{*,T}\mathbf y\rangle_{A_{\epsilon,R}}
  +\mathfrak d_R^T(s,\mathbf y)-\mathfrak d_\epsilon^T(s,\mathbf y).
\end{equation}
We first apply this formula with $\epsilon>0$.  The local estimates
will justify the limit as $\epsilon$ tends to zero.

\subsection{Horizontal representatives}

Set $\lambda=Te^{\ii\phi}$.  Since $Y$ is elliptic, every horizontal
form can be written as $\nu_\xi=\xi e^{\ii\phi}\alpha$.
On the Toda disk put
\begin{equation}\label{eq:horizontal-local-representative}
 \begin{gathered}
 \rho=3T|w_0|,\qquad
 s_\xi=\frac{\xi}{T}\rho\diag(u_i'(\rho)),\\
 \mathbf H_0(\nu_\xi,T)=\mathbf F_{\nu_\xi}-D_T''s_\xi.
 \end{gathered}
\end{equation}
Thus its two components are
$$
 \begin{aligned}
 a_{\mathbf H_0}&=-\bar\partial_Es_\xi,\\
 \varphi_{\mathbf H_0}
 &=M_{\nu_\xi}+[s_\xi,\Phi_T]
 =3e^{\ii\phi}\xi\sum_i(1+\rho d_i')P_i\,dw_0.
 \end{aligned}
$$
The tangent parameter satisfies $\dot T=\Re\xi$ and
$T\dot\phi=\Im\xi$.  The formula for $\mathbf H_0$ is complex linear
in $\xi$, as required by the complex structure of the Higgs moduli
space.
We extend this representative by
\begin{equation}\label{eq:horizontal-representative}
 \mathbf H'(\nu_\xi,T)=\mathbf F_{\nu_\xi}-D_T''(\chi_0s_\xi).
\end{equation}

\begin{prop}
\label{lem:horizontal-representative}
The pair $\mathbf H'(\nu,T)$ is closed and trace-free, and
$-\chi_0s_\xi\in\mathcal Q$.  For $T$ sufficiently large,
$$
 \left|
 \langle\mathbf H'(\nu_1,T),\mathbf H'(\nu_2,T)\rangle_{\rm app}
 -2\ii\int_Y\nu_1\wedge\overline{\nu_2}
 \right|
 \leq Ce^{-cT}\|\nu_1\|_Y\|\nu_2\|_Y.
$$
In particular, $\|\mathbf H'(\nu,T)\|_{\rm app}\leq C\|\nu\|_Y$.
\end{prop}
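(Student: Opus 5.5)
Closedness and trace-freeness are immediate.  The pair $\mathbf F_{\nu}$ is $D_T''$-closed, since $M_\nu$ is holomorphic and commutes with $\Phi_T$.  The term $D_T''(\chi_0s_\xi)$ is closed because $D_T''\circ D_T''=0$ on gauge sections.  Both terms are trace-free: $\operatorname{Tr}_\pi\alpha=0$ by $\sigma^*\alpha=\omega\alpha$, and $s_\xi$ is diagonal with $\sum_iu_i'=0$.

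For membership of $-\chi_0s_\xi$ in $\mathcal Q$, I would use the criterion proved just before the statement: a bounded trace-free section, locally $W^{1,2}$, with $D_T''$ in $L^2$.  By Proposition~\ref{prop:toda-decay}, $\rho u_i'=\gamma_i^\delta+O_b(\rho^{b_I})$ near $\rho=0$, so $s_\xi$ is bounded.  The formula for $\varphi_{\mathbf H_0}$ involves $1+\rho d_i'$ times $P_i$.  Its $h^{\rm app}$-norm picks up $e^{d_i}\sim\rho^{\gamma_{i+1}-\gamma_i}$, which is at worst $\rho^{-3\delta}$ and hence square integrable.  The component $\bar\partial_Es_\xi$ is $O(|w|^{-1+b_I})$, also square integrable.

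For the pairing, I would split $X$ into the Toda disk $\{|w_0|\le\varrho_0\}$, the transition annulus, and the exterior.  On the exterior and at $p=1,\infty$, $\mathbf H'=\mathbf F_\nu$ and $h^{\rm app}=h^{\rm dec}$.  There the integrand equals the pullback computation $\frac13\sum_{\text{sheets}}$, giving exactly the corresponding part of $2\ii\int_Y\nu_1\wedge\bar\nu_2$.  On the transition annulus, $s_\xi$ and its derivatives, as well as $h^{\rm app}-h^{\rm dec}$, are $O(e^{-cT})$ by \eqref{eq:toda-infinity-estimate}, giving errors $Ce^{-cT}|\xi_1||\xi_2|$.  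Since $\|\nu_\xi\|_Y$ is a fixed multiple of $|\xi|$, this is the claimed form.

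The main step is the Toda disk, where $\chi_0=1$.  There I would compute $\langle\mathbf H_0(\nu_1),\mathbf H_0(\nu_2)\rangle$ explicitly in the unitary cyclic frame as a radial integral.  It should equal
\[
\xi_1\bar\xi_2\,c\int_0^{3T\varrho_0}\Bigl(\sum_i(1+\rho d_i')^2e^{2d_i}-|\rho\,\partial_\rho(\rho u_i')|^2/\rho^2\cdot(\dots)\Bigr)\rho\,d\rho,
\]
with the sign of the $a$-part fixed by \eqref{eq:deformation-pairing}.  The model value is $\sum_i1\cdot\rho\,d\rho$.  Rather than evaluating this directly, I would use Green's formula \eqref{eq:local-green-formula} on $A_{\epsilon,R}$.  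Expanding gives
\[
\langle\mathbf F-D''s,\mathbf F-D''s\rangle=\langle\mathbf F,\mathbf F\rangle-\langle D''s,\mathbf F\rangle-\langle\mathbf F,D''s\rangle+\langle D''s,D''s\rangle.
\]
The Toda equation \eqref{eq:cyclic-toda} should make $(D_T'')^{*}\mathbf H_0$ vanish: it is exactly the statement that $\rho u'$ is the infinitesimal scaling generator, which is how $s_\xi$ was chosen (the derivative of the family $h_{\lambda}$ in $T$).  Hence $\langle D''s,\mathbf H_0\rangle$ reduces to the boundary terms $\mathfrak d_R-\mathfrak d_\epsilon$.

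The sticky point is checking that $\langle\mathbf F,\mathbf F\rangle_{\rm app}$ on the disk differs from the decoupled value only by boundary terms.  I expect this to go through a second Green identity: $\sum_i e^{2d_i}-3$ is itself a total $\rho$-derivative of $\rho u_i'$-type expressions by the Toda equation.  The boundary terms at $\epsilon\to0$ vanish because $s_\xi$ is bounded while $a_y^\dagger=O(|w|^{-1+b_I})$, so the circle integral is $O(\epsilon^{b_I})$.  This is the place where the parabolic exponents matter, uniformly on $I$ by \eqref{eq:toda-origin-estimate}.  The outer terms at $R=\varrho_0$ are $O(e^{-cT})$ by the exponential decay.

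Collecting the pieces gives the asymptotic identity.  The bound $\|\mathbf H'(\nu,T)\|_{\rm app}\le C\|\nu\|_Y$ follows by taking $\nu_1=\nu_2$.
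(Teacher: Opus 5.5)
Your arguments for closedness, trace-freeness, $-\chi_0s_\xi\in\mathcal Q$, the exterior and the transition annulus are fine. The Toda-disk step, however, has a genuine gap, and it is the whole content of the estimate.

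After you use the Coulomb condition to turn $\langle D''s,\mathbf H_0\rangle$ into boundary terms, your expansion leaves
\[
\langle\mathbf H_0,\mathbf H_0\rangle=\langle\mathbf F,\mathbf F\rangle-\langle\mathbf F,D''s\rangle+(\text{boundary terms}).
\]
You address only $\langle\mathbf F,\mathbf F\rangle$, and you expect it to match the decoupled value up to vanishing boundary terms. That expectation is false. Put $S=\sum_ie^{2d_i}$. The disk contribution of $\langle\mathbf F,\mathbf F\rangle_{\rm app}$ minus its decoupled value is $\frac{8\pi}{3}\frac{|\xi|^2}{T^2}\int_0^{3T\varrho_0}\rho(S-3)\,d\rho$. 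The radial Toda system has the Pohozaev identity
\[
2\rho(S-3)=\frac{d}{d\rho}\Bigl(\rho^2(S-3)-\tfrac12|\rho u'|^2\Bigr).
\]
Its inner boundary value is $\tfrac12|\gamma^\delta|^2=3\delta^2\neq0$, because $\rho u_i'\to\gamma_i^\delta$. So this difference equals $4\pi\delta^2|\xi|^2T^{-2}+O(e^{-cT})$, which is only polynomially small.

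The cross term $\langle\mathbf F,D''s\rangle$ is not a boundary term either. For the Toda metric $(D_T'')^{*,T}\mathbf F\neq0$: its diagonal entries are proportional to $e^{2d_i}-e^{2d_{i-1}}$. This cross term is exactly what cancels the $\delta^2T^{-2}$ discrepancy, and your plan does not account for it. Your remark that boundary terms vanish like $\epsilon^{b_I}$ applies to the Green terms $\mathfrak d_\epsilon$, not to this inner boundary.

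The missing idea is to apply the Coulomb condition only once. This replaces $\langle\mathbf H_0(\nu_\xi),\mathbf H_0(\nu_\eta)\rangle$ by the mixed pairing $\langle\mathbf H_0(\nu_\xi),\mathbf F_{\nu_\eta}\rangle$, up to vanishing boundary terms. You then compute that mixed pairing pointwise:
\begin{itemize}
\item Since $\mathbf F_{\nu_\eta}$ has no $(0,1)$-component, only $\varphi_{\mathbf H_0}=3e^{\ii\phi}\xi\sum_i(1+\rho d_i')P_i\,dw_0$ contributes.
\item The density minus the decoupled one is $\rho\bigl(S-3+\tfrac\rho2S'\bigr)=\tfrac12\frac{d}{d\rho}\bigl[\rho^2(S-3)\bigr]$.
\item Both endpoint values of this primitive are harmless: $\rho^2(S-3)=O(\rho^{2-6\delta})$ at zero, uniformly on $I$, and the value at $3T\varrho_0$ is exponentially small.
\end{itemize}
Separately, in your direct formula the $(0,1)$-part must enter with a plus sign, since the real part of the pairing is the positive $L^2$ inner product.
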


\begin{proof}
Since $D_T''\mathbf F_{\nu_\xi}=0$ and $(D_T'')^2=0$, the pair
$\mathbf H_0$ is closed.  Differentiating the Toda equation gives
$$
 \begin{aligned}
 (\rho u_i')''+\rho^{-1}(\rho u_i')'
 =4\bigl[&e^{2d_{i-1}}(1+\rho d_{i-1}')\\
         &-e^{2d_i}(1+\rho d_i')\bigr].
 \end{aligned}
$$
Using $\rho=3T|w_0|$, we obtain
\begin{equation}\label{eq:horizontal-coulomb-equation}
 \begin{aligned}
 (\partial_{E,h_{0,T}}\bar\partial_Es_\xi)_{ii}
 &=9T\xi\bigl[e^{2d_{i-1}}(1+\rho d_{i-1}')
              -e^{2d_i}(1+\rho d_i')\bigr]
              dw_0\wedge d\bar w_0\\
 &=[\Phi_T^{\dagger_T},\varphi_{\mathbf H_0}]_{ii}.
 \end{aligned}
\end{equation}
Both sides are diagonal, so this proves the gauge equation in
\cite[equation~(7.4)]{FMSW22}.  The K\"ahler identities then give
$(D_T'')^{*,T}\mathbf H_0=0$.  The determinant-one condition gives
$\tr s_\xi=0$.  Since
$\bar\partial\rho=(\rho/(2\bar w_0))d\bar w_0$, the Toda equation gives
$$
 \bar\partial_Es_{\xi,i}
 =\frac{\xi}{T}\frac{\rho^2}{\bar w_0}
   \left(e^{2d_{i-1}}-e^{2d_i}\right)d\bar w_0.
$$
The induced norm on diagonal endomorphisms is the Euclidean norm.
The endpoint estimates therefore imply
$$
 |s_\xi|\leq C_I\frac{|\xi|}{T},\qquad
 |\bar\partial_Es_\xi|\leq C_I|\xi|\rho^{1-6\delta},
 \qquad 0<\rho\leq1.
$$
Since $|P_i|_{h_{0,T}}=e^{d_i}$ and $\rho d_i'$ is bounded,
the Higgs component satisfies
$|\varphi_{\mathbf H_0}|\leq C_I|\xi|\rho^{-3\delta}$ on this disk.
The two components are square-integrable because
$$
 \int_0^1\rho^{3-12\delta}\,d\rho<\infty,
 \qquad
 \int_0^1\rho^{1-6\delta}\,d\rho<\infty,
$$
with bounds uniform for $\delta\in I$.  Thus the pairings below
converge at the puncture.  Multiplying the bounds for $s_\xi$ and
$\bar\partial_Es_\eta$ by the circle length gives
$$
 |\mathfrak d_r^T(-s_\xi,\mathbf H_0(\nu_\eta,T))|
 \leq C_I|\xi\eta|T^{-6\delta}r^{2-6\delta}.
$$
Toda decay gives the same bound when $\rho\geq1$.
For $T\geq1$ and $r\leq1$ this is bounded by
$C_I|\xi\eta|r^{2-6\sup I}$, which tends to zero uniformly.
At a fixed outer circle, $s_\xi$ and its required derivatives are
$O(e^{-cT})\|\nu_\xi\|_Y$ by Proposition~\ref{prop:toda-decay}.
In particular, the Coulomb condition and Hermitian symmetry give
$$
 \begin{aligned}
 &\langle\mathbf H_0(\nu_\xi,T),D_T''(-s_\eta)\rangle_{A_{\epsilon,R}}\\
 &\quad=\overline{\mathfrak d_R^T(-s_\eta,\mathbf H_0(\nu_\xi,T))
             -\mathfrak d_\epsilon^T(-s_\eta,\mathbf H_0(\nu_\xi,T))}.
 \end{aligned}
$$
Taking $\epsilon\downarrow0$ and $R=\varrho_0$ replaces the second
horizontal representative by $\mathbf F_{\nu_\eta}$ with an
exponentially small error.

The remaining integral has an explicit primitive.  Put
$S(\rho)=\sum_i e^{2d_i(\rho)}$.  The matrix units $P_i$ are
orthogonal and have squared norm $e^{2d_i}$.
Using $|dw_0|^2=2$, $dA_{w_0}=\frac{\ii}{2}dw_0\wedge d\bar w_0$
and the covering degree, the polarized difference from the decoupled
pairing on the inner disk is
\begin{equation}\label{eq:horizontal-boundary-identity}
 \begin{gathered}
 \frac{8\pi}{3}\frac{\xi\bar\eta}{T^2}
 \int_0^{3T\varrho_0}
 \rho\left(S-3+\frac\rho2S'\right)d\rho,\\
 \rho\left(S-3+\frac\rho2S'\right)
 =\frac12\frac d{d\rho}\left[\rho^2(S-3)\right].
 \end{gathered}
\end{equation}
The lower boundary vanishes since
$\rho^2(S-3)=O(\rho^{2-6\delta})$; the upper boundary is exponentially
small by Proposition~\ref{prop:toda-decay}.  These bounds are uniform
for $\delta\in I$.

On the exterior region $U\subset X\setminus D$, the direct-image
trace sums the three sheets, so the pairing is
$2\ii\int_{\pi^{-1}(U)}\nu_1\wedge\overline{\nu_2}$.  On the fixed transition annulus, the model and the
decoupled metric, as well as the representatives, differ by
$O(T^Ne^{-cT})$ in the required norms.  Summing the regions and decreasing
$c$ proves the estimate.  It also proves the uniform norm bound.
Finally, $\mathbf F_\nu$ is square-integrable: its pointwise norm
with respect to $h_{0,T}$ and $g_c$ is $O(r^{-3\delta})$ near $p=0$.  Thus
$D_T''(\chi_0s_\xi)=\mathbf F_\nu-\mathbf H'\in L^2$, and the cutoff
criterion gives $\chi_0s_\xi\in\mathcal Q$.
\end{proof}

\subsection{Vertical representatives}

We first localize the spectral deformation away from the punctures,
then add the local harmonic potentials.
Since $Y$ is elliptic and $3\,dw_p=\alpha$, there is a constant
$b=b(\tau)$ such that, on each coordinate disk,
$$
 \tau=b\,d\bar w_p,\qquad \beta_p(\tau)=b\bar w_p.
$$
These primitives vanish at the ramification points and satisfy
$\sigma^*\beta_p=\omega^2\beta_p$, as does $\tau$.
For fixed nested disks and cutoffs, put
$$
 \tau'=\tau-\sum_{p\in D}
 \bar\partial\bigl((\pi^*\chi_p)\beta_p(\tau)\bigr).
$$
Then $\tau'$ vanishes on the inner disks, represents the same class as
$\tau$, and
$$
 \|\tau'\|_{L^2}+\sum_p\|\beta_p(\tau)\|_{C^1}
 \leq C\|\tau\|_Y.
$$
The direct-image trace is zero because
$\sum_{j=0}^2\sigma^{j*}\tau'=0$.  The family
$\bar\partial_{L_q}+\varepsilon\tau'$, for small
$\varepsilon\in\C$, is constant on the inner disks and has the
same tangent class as the family $L_q=L_0\otimes P_q$.  Its direct image is a
family of strongly parabolic Higgs bundles, and $\mathbf F_{\tau'}$
represents the vertical tangent class.

On each local model define
\begin{equation}\label{eq:vertical-local-representative}
 S_p(\tau,T)=M_{\overline{\beta_p(\tau)}}^{\dagger_T},
 \qquad V_p(\tau,T)=D_T''S_p(\tau,T).
\end{equation}
Put $t_\tau=\sum_p\chi_pS_p(\tau,T)$ and define
\begin{equation}\label{eq:vertical-representative}
 \begin{aligned}
 \mathbf V'(\tau,T)
 &=\mathbf F_{\tau'}+D_T''t_\tau\\
 &=\mathbf F_\tau+
 \sum_pD_T''\bigl(\chi_p(S_p(\tau,T)-M_{\beta_p(\tau)})\bigr).
 \end{aligned}
\end{equation}
The equality follows from $[M_{\beta_p},\Phi_T]=0$.
The first expression relates $\mathbf V'$ to the strongly parabolic
deformation $\mathbf F_{\tau'}$.  The second expresses its difference
from $\mathbf F_\tau$ on the transition annuli, where the adjoint
defined by $h_{p,T}$ approaches the decoupled adjoint exponentially.

\begin{prop}
\label{lem:vertical-representative}
The pair $\mathbf V'(\tau,T)$ is closed and trace-free, its potential
$t_\tau$ belongs to $\mathcal Q$, and
$$
 \|\mathbf V'(\tau,T)\|_{\rm app}\leq C_I\|\tau\|_Y.
$$
On each inner disk it equals the Coulomb representative $V_p(\tau,T)$.
\end{prop}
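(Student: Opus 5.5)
We verify the four assertions directly from the two expressions in \eqref{eq:vertical-representative}, arguing in parallel with Proposition~\ref{lem:horizontal-representative}.

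\emph{Closedness and trace.} The first expression is $\mathbf F_{\tau'}+D_T''t_\tau$. Here $\mathbf F_{\tau'}$ is closed: $\bar\partial_E M_{\tau'}$ is trivially zero in form degree, and $[M_{\tau'},\Phi_T]=0$ because both act as multiplication on the spectral line bundle. Since $(D_T'')^2=0$, the sum is closed. For the trace, $\tr M_{\tau'}=\operatorname{Tr}_\pi\tau'=0$ by $\sum_j\sigma^{j*}\tau'=0$. Also $\tr S_p=\overline{\tr M_{\overline{\beta_p}}}=\overline{\operatorname{Tr}_\pi\overline{\beta_p}}$, and this vanishes because $\sigma^*\beta_p=\omega^2\beta_p$ makes the sheet sum of $\beta_p$, hence of $\overline{\beta_p}$, equal to zero. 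Since $D_T''$ commutes with trace, $\mathbf V'$ is trace-free.

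\emph{Local form on the inner disks.} On the inner disks $\tau'=0$ and $\chi_p=1$, so $\mathbf V'=D_T''S_p=V_p$. To show this is Coulomb, it suffices to prove $(D_T'')^{*,T}D_T''S_p=0$, i.e.\ that $S_p$ solves the linearized equation. In the cyclic frame $\pi^*M_{\overline{\beta_p}}$ is, up to the factor $\bar b$, the cyclic matrix $w_p P^{-1}$-type shift with coefficient $w_p$. Its $h_{p,T}$-adjoint is therefore a weighted shift with entries $\bar w_p e^{2(u_i-u_j)}$. The Coulomb identity should then reduce, entry by entry, to the Toda equation \eqref{eq:cyclic-toda}: $\partial\bar\partial$ of $\bar w e^{2d}$ gives terms $\bar w(\Delta d+|\nabla d|^2)$ plus a cross term $\partial_{\bar w}$. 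One has to check that these match $[\Phi^{\dagger},[\Phi,S]]$. I expect this to be the main computational obstacle. I would organize it exactly as the horizontal case \eqref{eq:horizontal-coulomb-equation}, using radial symmetry $u=u(\rho)$, the relation $\bar w\,\partial_{\bar w}\rho=\rho/2$, and the identity $P^\dagger$-commutators in terms of $e^{2d_i}$. An alternative is to note that $S_p$ is the infinitesimal gauge transformation induced by the holomorphic translation symmetry $w\mapsto w+\epsilon$ twisted by the conjugate, which preserves Hitchin's equation for the homogeneous model.

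\emph{Membership in $\mathcal Q$ and the norm bound.} The entries of $S_p$ are bounded by $|b||w_p|e^{2|d_i|}\le C|b|\,|w_p|\rho^{-6\delta}\lesssim |b|T^{-1}\rho^{1-6\delta}$ near zero, which is bounded, and by $C|b|$ elsewhere. Also $D_T''S_p$ is locally $W^{1,2}$, with pointwise norm $O(\rho^{-3\delta})$ near the puncture after Proposition~\ref{prop:toda-decay}, so it is square integrable uniformly in $I$. The cutoff criterion preceding Proposition~\ref{lem:horizontal-representative} then gives $t_\tau\in\mathcal Q$.

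For the norm, on the exterior region $\mathbf V'=\mathbf F_\tau+O(T^Ne^{-cT})\|\tau\|_Y$, using the second expression and the exponential convergence of $\dagger_T$ to the decoupled adjoint on the transition annuli. There its pairing equals $\|\tau\|_Y^2$ up to exponentially small error. On the inner disks we bound $\|V_p\|$ via the Green formula \eqref{eq:local-green-formula} with the Coulomb property. This reduces the disk energy to boundary terms $\mathfrak d_r^T(S_p,V_p)$: the inner term vanishes as $r\to0$ by the estimates above, and the outer one is $O(\|\tau\|_Y^2)$ since $\beta_p$ is $C^1$-bounded by $C\|\tau\|_Y$. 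Combining the regions yields $\|\mathbf V'\|_{\rm app}\le C_I\|\tau\|_Y$.
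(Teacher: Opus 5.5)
\textbf{There is a genuine gap at the Coulomb step.} Your outline follows the paper's structure: closedness and trace-freeness from $(D_T'')^2=0$ and the character sums, the identification $\mathbf V'=V_p$ on the inner disks, the cutoff criterion, and Green's formula \eqref{eq:vertical-disk-energy} for the disk energy. What is missing is a proof of $(D_T'')^{*,T}V_p=0$. You defer it to an entrywise Toda computation that you do not carry out. Your fallback via the translation $w\mapsto w+\epsilon$ is not a valid argument: translation moves the puncture and the frame $\pi^*e_j=w^{j-1}f_j$, so it is not a symmetry of the adapted model on the punctured disk, and nothing in your proposal identifies $S_p$ with a generator of it. Both the Coulomb assertion and your disk energy bound (Green's formula with no interior term) rest on this identity.

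\textbf{How to close it, with no Toda computation.} The function $\overline{\beta_p}=\bar b\,w_p$ is holomorphic, so $S_p^{\dagger_T}=M_{\overline{\beta_p}}$ is a holomorphic endomorphism commuting with $\Phi_T$. Hence both components of $D_T''S_p^{\dagger_T}$ vanish. Taking $h_{p,T}$-adjoints componentwise gives $D_T'S_p=0$, where $D_T'=\partial_{E,h_{p,T}}+\ad\Phi_T^{\dagger_T}$. Concretely, $M_{w_0}=w_0P$ in the cyclic frame, not a $P^{-1}$-shift. So $S_0$ is a multiple of the matrix of $\Phi_T^{\dagger_T}$, and $\partial_{h_{0,T}}$ kills each term $\bar w_0e^{2d_i}P_i^{\mathsf T}$. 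Harmonicity of $h_{p,T}$ gives $D_T'D_T''+D_T''D_T'=0$. The K\"ahler identity writes $(D_T'')^{*,T}$ on one-forms as a fixed multiple of $\ii\Lambda D_T'$. Therefore $(D_T'')^{*,T}V_p$ is a multiple of
\[
\Lambda D_T'D_T''S_p=-\Lambda D_T''D_T'S_p=0 .
\]
The Toda equation enters only through harmonicity.

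\textbf{A secondary error in the size estimates.} Your bound for $S_p$ uses matrix entries in the cyclic frame and gives $T^{-1}\rho^{1-6\delta}$. This is not bounded for $\delta>1/6$. Combined with $|V_0|=O(\rho^{-3\delta})$, it bounds the inner boundary term only by $CT^{-2}\rho^{2-9\delta}$, which does not vanish for $\delta\geq 2/9$. Both the cutoff criterion and $\mathfrak d_r^T$ need the $h_{0,T}$-norm. Since $|P_i^{\mathsf T}|_{h_{0,T}}=e^{-d_i}$, one has $|e^{2d_i}P_i^{\mathsf T}|_{h_{0,T}}=e^{d_i}$. Hence
\[
|S_0|\leq C_IT^{-1}\rho^{1-3\delta}\|\tau\|_Y,
\qquad
|\mathfrak d_r^T(S_0,V_0)|\leq C_IT^{-6\delta}r^{2-6\delta}\|\tau\|_Y^2 ,
\]
and these are the estimates that make the argument work throughout $I$.
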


\begin{proof}
Since $\beta_p$ is antiholomorphic,
$S_p^{\dagger_T}=M_{\overline{\beta_p}}$ is holomorphic and commutes
with $\Phi_T$.  Thus $D_T''S_p^{\dagger_T}=0$ and, by taking adjoints,
$D_T'S_p=0$, where
$$
 D_T'=\partial_{E,h_{p,T}}+\ad\Phi_T^{\dagger_T}.
$$
Since $h_{p,T}$ solves Hitchin's equations,
$$
 D_T'D_T''+D_T''D_T'=0,\qquad (D_T')^2=(D_T'')^2=0.
$$
Together with $D_T'S_p=0$ and the K\"ahler identities, this gives
$D_T''V_p=0$ and $(D_T'')^{*,T}V_p=0$; compare
\cite[Proposition~2.26]{Mochizuki23}.  These identities hold on every
annulus avoiding the puncture.

We check the boundary and integrability conditions at $p=0$.  Write
$r=|w_0|$ and $\rho=3Tr$.  In the cyclic frame,
$$
 \begin{aligned}
 S_0&=b\bar w_0\sum_i e^{2d_i}P_i^{\mathsf T},\\
 M_{\beta_0}&=b\bar w_0\sum_iP_i^{\mathsf T},\\
 (d_0,d_1,d_2)&=(0,3\delta,-3\delta)\log\rho+O(1).
 \end{aligned}
$$
The three terms have respective orders
$(3T)^{-1}(\rho,\rho^{1+3\delta},\rho^{1-3\delta})$
in the norm induced by $h_{0,T}$.
Differentiation bounds $|\bar\partial_ES_0|$ by
$C_I\rho^{-3\delta}\|\tau\|_Y$.  The Higgs term satisfies
$$
 |[\Phi_T,S_0]|\leq C_I\rho^{1-6\delta}\|\tau\|_Y
 \leq C_I\rho^{-3\delta}\|\tau\|_Y,\qquad 0<\rho\leq1,
$$
since $1-3\delta>0$.  Hence
\begin{equation}\label{eq:vertical-local-bounds}
 |S_0|\leq C_IT^{-1}\rho^{1-3\delta}\|\tau\|_Y,
 \qquad
 |V_0|\leq C_I\rho^{-3\delta}\|\tau\|_Y.
\end{equation}
Equivariance of multiplication and the local metric makes $S_0$
descend, and its trace
vanishes by the character calculations above.  Its norm tends to zero
since $1-3\delta>0$.  The corresponding local integrals are bounded by
$$
 C_IT^{-4}\|\tau\|_Y^2\int_0^1\rho^{3-6\delta}\,d\rho,
 \qquad
 C_IT^{-2}\|\tau\|_Y^2\int_0^1\rho^{1-6\delta}\,d\rho,
$$
for the potential and $D_T''S_0$, respectively.  Both are
finite throughout the chamber.  At $p=1,\infty$, the model is decoupled,
so $S_p=M_{\beta_p}$ and $V_p=\mathbf F_\tau$.  The cutoff criterion
now gives $t_\tau\in\mathcal Q$.

Since $V_0=D_T''S_0$ and $(D_T'')^{*,T}V_0=0$, Green's formula on
$A_{r,\varrho_0}$ gives
\begin{equation}\label{eq:vertical-disk-energy}
 \langle V_0,V_0\rangle_{A_{r,\varrho_0}}
 =\mathfrak d_{\varrho_0}^T(S_0,V_0)
  -\mathfrak d_r^T(S_0,V_0).
\end{equation}
For $\rho\leq1$, \eqref{eq:vertical-local-bounds} gives
$$
 |\mathfrak d_r^T(S_0,V_0)|
 \leq C_IT^{-6\delta}r^{2-6\delta}\|\tau\|_Y^2
 \leq C_Ir^{2-6\sup I}\|\tau\|_Y^2.
$$
For $\rho\geq1$, Toda decay gives $|S_0|\leq C_Ir\|\tau\|_Y$ and
$|V_0|\leq C_I\|\tau\|_Y$, so the boundary term is bounded by
$C_Ir^2\|\tau\|_Y^2$.  The inner boundary in
\eqref{eq:vertical-disk-energy} thus vanishes uniformly, and the fixed
outer boundary is bounded by $C_I\|\tau\|_Y^2$.  This proves the disk
norm bound.  The spectral $L^2$ identity on the exterior and Toda decay
on the fixed transition annuli give the global estimate.  Closedness
and trace-freeness follow from $(D_T'')^2=0$ and the corresponding
properties of $\mathbf F_{\tau'}$ and $S_p$.
\end{proof}

\subsection{Comparison of the pairings}

Under the spectral splitting the semi-flat Hermitian form is
$$
 \langle(\nu_1,\tau_1),(\nu_2,\tau_2)\rangle_{\rm sf}
 =2\ii\int_Y\nu_1\wedge\overline{\nu_2}
  -2\ii\int_Y\tau_1\wedge\overline{\tau_2}.
$$
Indeed, the direct-image trace sums the sheets and integration on the
base contributes the reciprocal covering degree.  Thus the decoupled
pairing is the spectral Hodge pairing.  The Gauss--Manin horizontal
and vertical tangent subspaces are orthogonal; see
\cite[Proposition~2.14 and Theorem~2.15]{Fredrickson-1810.01554} and
\cite[Proposition~4.6]{Mochizuki23}.  In particular,
$\|(\nu,\tau)\|_{\rm sf}^2=\|\nu\|_Y^2+\|\tau\|_Y^2$.

\begin{prop}
\label{prop:spectral-pairings}
For $x_j=(\nu_j,\tau_j)$ and $T$ sufficiently large,
\begin{equation}\label{eq:approximate-spectral-pairings}
 \begin{gathered}
 \left|
 \left\langle\mathbf H'(\nu_1,T)+\mathbf V'(\tau_1,T),
               \mathbf H'(\nu_2,T)+\mathbf V'(\tau_2,T)\right\rangle_{\rm app}
 -\langle x_1,x_2\rangle_{\rm sf}
 \right|\\
 \leq Ce^{-cT}\|x_1\|_{\rm sf}\|x_2\|_{\rm sf}.
 \end{gathered}
\end{equation}
\end{prop}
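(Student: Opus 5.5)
Expanding the left-hand side by sesquilinearity gives four pairings.  The two diagonal ones are controlled by what is already proved.  Proposition~\ref{lem:horizontal-representative} handles the horizontal--horizontal pairing directly.  For the vertical--vertical pairing, I will compute it region by region, following the proof of Proposition~\ref{lem:vertical-representative}:
\begin{itemize}
\item at $p=1,\infty$, where $V_p=\mathbf F_\tau$, and on the exterior, the decoupled pairing is exactly $-2\ii\int\tau_1\wedge\overline{\tau_2}$ restricted to those regions;
\item on the transition annuli, $h_{p,T}$ and the decoupled adjoint differ by $O(T^Ne^{-cT})$, so $S_p-M_{\beta_p}$ and its derivatives are exponentially small;
\item on the Toda disk at $p=0$, I will compare $\langle V_0(\tau_1),V_0(\tau_2)\rangle$ with the decoupled pairing of $\mathbf F_\tau$.
\end{itemize}

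The disk comparison uses the polarized Green identity \eqref{eq:vertical-disk-energy}: with $V_0=D_T''S_0$ in Coulomb gauge, $\langle V_0(\tau_1),V_0(\tau_2)\rangle_{A_{r,\varrho_0}}=\mathfrak d_{\varrho_0}^T(S_0(\tau_1),V_0(\tau_2))-\mathfrak d_r^T(\cdots)$.  The same identity holds for the decoupled data $M_{\beta_0}$ and $\mathbf F_\tau$.  The inner terms vanish as $r\to0$, uniformly on $I$, by the bound $C_Ir^{2-6\sup I}$.  The outer terms at $|w_0|=\varrho_0$ differ by $O(e^{-cT})\|\tau_1\|_Y\|\tau_2\|_Y$, since there $\rho=3T\varrho_0$ and Proposition~\ref{prop:toda-decay} makes $e^{2d_i}-1$ exponentially small with derivatives.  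Summing regions gives the vertical estimate.

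The main work is the two mixed pairings $\langle\mathbf H',\mathbf V'\rangle_{\rm app}$ and their conjugates, which must be $O(e^{-cT})\|\nu\|_Y\|\tau\|_Y$; the decoupled value is $0$ by type.  Pointwise off the Toda disk, $\mathbf F_\nu=(0,M_\nu)$ and $\mathbf F_\tau=(M_\tau,0)$ have components of different types, so the pairing \eqref{eq:deformation-pairing} vanishes identically on the exterior and at $p=1,\infty$.  On transition annuli the correction terms are exponentially small.

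On the Toda disk I will use the cyclic matrix structure.
\begin{itemize}
\item $a_{\mathbf H_0}=-\bar\partial_Es_\xi$ is diagonal, while $a_{V_0}=\bar\partial_ES_0$ is a combination of the off-diagonal $P_i^{\mathsf T}$, so $\tr(a_H\wedge a_V^{\dagger})=0$.
\item $\varphi_{\mathbf H_0}\propto\sum_i(1+\rho d_i')P_i\,dw_0$, and $\varphi_{V_0}=[\Phi_T,S_0]$ involves $[P,P_j^{\mathsf T}]$, which is diagonal, so the trace with $\varphi_H^\dagger$ again vanishes.
\end{itemize}
Hence the local mixed pairing is identically zero on the disk where $\chi_0=1$.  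As a backup, I would use Green's formula with $\mathbf H_0$ in Coulomb gauge, reducing the pairing to boundary terms $\mathfrak d^T(S_0,\mathbf H_0)$; the inner term vanishes by the same exponent count and the outer term is exponentially small.

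I expect the hardest step to be the bookkeeping on the transition annulus $\operatorname{supp}d\chi_0$.  There $\mathbf H'$ contains $D_T''(\chi_0s_\xi)$ and $\mathbf V'$ contains the cutoff correction.  I will bound each of these in $L^2$ by $O(T^Ne^{-cT})$, using Proposition~\ref{prop:toda-decay} at $\rho\sim T$ together with the uniform norm bounds of Propositions~\ref{lem:horizontal-representative} and~\ref{lem:vertical-representative}.  Cauchy--Schwarz then finishes the estimate after decreasing $c$.  Finally, I will expand by sesquilinearity and use $\|x\|_{\rm sf}^2=\|\nu\|_Y^2+\|\tau\|_Y^2$ to assemble the stated bound.
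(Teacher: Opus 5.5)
Your proposal is correct and follows essentially the same route as the paper: the horizontal pairing comes from Proposition~\ref{lem:horizontal-representative}, the vertical pairing from the Green identity \eqref{eq:vertical-disk-energy} applied to the Toda model and its decoupled limit (vanishing inner boundary terms, exponentially small outer ones), and the mixed pairing from type separation off the Toda disk together with the diagonal, $\operatorname{span}\{P_i\}$ and $\operatorname{span}\{P_i^{\mathsf T}\}$ structure on it. The only difference is that the paper observes that this cyclic structure persists after multiplication by the cutoffs, and $h_T^{\rm app}$ stays diagonal there. The mixed pairing therefore vanishes identically, including on the transition annulus, so your Cauchy--Schwarz bookkeeping there is unnecessary, though harmless.
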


\begin{proof}
At $p=1,\infty$ the representatives equal their decoupled limits.  On
the fixed transition annuli, Toda decay bounds all pairing errors by
$CT^Ne^{-cT}\|x_1\|_{\rm sf}\|x_2\|_{\rm sf}$.  We may absorb $T^N$
by decreasing $c$.  It remains to compare the three pairings on the
Toda disk.

The horizontal pairing is Proposition~\ref{lem:horizontal-representative}.
For the vertical pairing, apply \eqref{eq:vertical-disk-energy}
to the model and its decoupled limit.  Both inner boundary terms vanish
by the estimates in Proposition~\ref{lem:vertical-representative}.
At the fixed outer circle, the difference of the potentials, metrics
and their required derivatives is exponentially small.  Adding the
transition and exterior estimates compares the squared norms.
Hermitian polarization then gives
$$
 \left|
 \langle\mathbf V'(\tau_1,T),\mathbf V'(\tau_2,T)\rangle_{\rm app}
 +2\ii\int_Y\tau_1\wedge\overline{\tau_2}
 \right|
 \leq Ce^{-cT}\|\tau_1\|_Y\|\tau_2\|_Y.
$$
The mixed pairing vanishes pointwise.  In the cyclic frame,
$a_{\mathbf H'}$ is diagonal and $\varphi_{\mathbf H'}$ lies in
$\operatorname{span}\{P_i\}$, whereas $a_{\mathbf V'}$ lies in
$\operatorname{span}\{P_i^{\mathsf T}\}$ and
$\varphi_{\mathbf V'}$ is diagonal.  These properties follow from
the displayed formulas for $s_\xi$ and $S_0$ and persist after
multiplication by the cutoffs.  Since $h_T^{\rm app}$ is diagonal
in this frame, both component pairings vanish on the Toda disk and
its transition annulus.  On the exterior and at $p=1,\infty$, the
horizontal representative has only a Higgs component and the
vertical representative has only a connection component.  Thus
\begin{equation}\label{eq:approximate-mixed-pairing}
 \langle\mathbf H'(\nu,T),\mathbf V'(\tau,T)\rangle_{\rm app}=0.
\end{equation}
Combining the horizontal and vertical estimates proves the result.
\end{proof}

We project the approximate representatives onto the
$L^2$-orthogonal with respect to $h_T$ complement of $D_T''\mathcal Q$ in the space
of deformation pairs.  The section
$(D_T'')^{*,\mathrm{app}}(\mathbf H'+\mathbf V')$ is supported on
the transition annuli.  Coercivity of $Q_{\rm app}$, followed by
comparison with $Q_{\rm ex}$, controls the required correction.

\begin{lem}
\label{lem:complex-coulomb-coercivity}
For $T$ sufficiently large, $Q_{\rm app}(r,r)\geq c_I\|r\|_{L^2}^2$
on $\mathcal Q$, and
$$
 \left|\langle\mathbf H'(\nu,T)+\mathbf V'(\tau,T),D_T''r
              \rangle_{\rm app}\right|
 \leq Ce^{-cT}\|(\nu,\tau)\|_{\rm sf}Q_{\rm app}(r,r)^{1/2}.
$$
\end{lem}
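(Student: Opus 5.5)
The lemma has two parts, and I will prove them separately: coercivity of the complex form $Q_{\rm app}$, then the near-orthogonality estimate.

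\emph{Coercivity.} Write a trace-free complex section as $r=u+\ii v$ with $u,v$ $h_T^{\rm app}$-self-adjoint. The form $Q_{\rm app}(r,r)$ is the $L^2$ norm of $(\bar\partial_Er,[\Phi_T,r])$. For the unitary pair $(A,\Phi_T)$ defined by $h_T^{\rm app}$, the Kähler identities give
$$
 \|\bar\partial_Er\|^2=\tfrac12\|d_Ar\|^2+\langle\ii\star F_A,[r,r^\dagger]\rangle,
 \qquad
 \|[\Phi_T,r]\|^2=\|[\Phi_T^\dagger,r]\|^2+\langle[\Phi_T,\Phi_T^\dagger],[r,r^\dagger]\rangle,
$$
up to sign conventions. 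Adding these shows that $Q_{\rm app}(r,r)$ equals $\tfrac12(\mathfrak q(u,u)+\mathfrak q(v,v))$ plus a term of the form $\langle R,[r,r^\dagger]\rangle$, where $R$ is the residual. This is first checked for $r\in C_c^\infty$ and then extended to $\mathcal Q$ by density. Proposition~\ref{prop:residual-decay} gives $\|R\|_{L^\infty}\leq Ce^{-cT}$, so the cross term is bounded by $Ce^{-cT}\|r\|_{L^2}^2$. Proposition~\ref{prop:uniform-coercivity}, applied to $u$ and to $v$, then gives $Q_{\rm app}(r,r)\geq(c_I/2-Ce^{-cT})\|r\|_{L^2}^2$. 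The completions agree because the two norms are equivalent up to this exponentially small error.

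\emph{Near-orthogonality.} Green's formula \eqref{eq:local-green-formula} moves $D_T''$ onto the representative:
$$
 \langle\mathbf H'+\mathbf V',D_T''r\rangle_{\rm app}
 =\langle(D_T'')^{*,{\rm app}}(\mathbf H'+\mathbf V'),r\rangle+\text{boundary terms}.
$$
This is justified first for compactly supported $r$ and then for all of $\mathcal Q$ by density, since $\mathbf H'+\mathbf V'\in L^2$ by Propositions~\ref{lem:horizontal-representative} and~\ref{lem:vertical-representative}. For compactly supported $r$ no inner boundary terms appear.

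The adjoint $(D_T'')^{*,{\rm app}}(\mathbf H'+\mathbf V')$ vanishes in three regions:
\begin{enumerate}[(i)]
\item on the Toda disks, where $\mathbf H'=\mathbf H_0$ and $\mathbf V'=V_p$ are Coulomb by \eqref{eq:horizontal-coulomb-equation} and Proposition~\ref{lem:vertical-representative};
\item at $p=1,\infty$, where the decoupled representatives are harmonic;
\item on the exterior, where $\mathbf F_\nu$ and $\mathbf F_\tau$ are Coulomb for the decoupled pair. Here $\nu$ is holomorphic, and $\tau$ is harmonic with $M_\tau$ commuting with $\Phi^\dagger$, so $\partial$-closedness gives the gauge equation.
\end{enumerate}
The only remaining support is the fixed transition annuli. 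There the Toda decay of Proposition~\ref{prop:toda-decay} shows that the metric, the potentials $\chi_0s_\xi$ and $\chi_pS_p-M_{\beta_p}$, and their derivatives differ from the decoupled ones by $O(T^Ne^{-cT})$. Together with the bound $T^{-1}|\xi|$ on $s_\xi$, this yields $\|(D_T'')^{*,{\rm app}}(\mathbf H'+\mathbf V')\|_{L^2}\leq Ce^{-cT}\|(\nu,\tau)\|_{\rm sf}$.

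Cauchy--Schwarz and the coercivity already proved then give $|\langle\cdot,r\rangle|\leq Ce^{-cT}\|(\nu,\tau)\|_{\rm sf}\|r\|_{L^2}\leq C'e^{-cT}\|(\nu,\tau)\|_{\rm sf}Q_{\rm app}(r,r)^{1/2}$.

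The main obstacle is the density step: it requires controlling $\langle\mathbf H'+\mathbf V',D_T''r\rangle$ uniformly for $r\in\mathcal Q$, which need not be bounded at the punctures. Continuity in the $\mathcal Q$-norm takes care of this: both sides are continuous in $r$ for that norm, since the right side involves only $\|r\|_{L^2}$.

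The sign in the Weitzenböck cross term, and the fact that it involves exactly $R$ rather than a curvature of indefinite size, must be checked against Lemma~\ref{lem:fixed-linearization}. If this fails, the fallback is to prove coercivity directly by repeating the contradiction argument of Proposition~\ref{prop:uniform-coercivity} for complex sections. In that argument $\bar\partial_E r\to0$ replaces $d_A u\to0$ on the exterior, and the centralizer reduction gives holomorphic trace-free functions on $Y$, hence zero.
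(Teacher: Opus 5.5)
Your proposal is correct and follows the paper's own proof. The paper proves the identity $Q_{\rm app}(r,r)=\mathfrak q^{\C}(r,r)-\langle[R_{\lambda,q,\delta},r],r\rangle_{L^2}$ on compactly supported sections, so the sign check you flag does produce exactly the residual; it then applies Propositions~\ref{prop:uniform-coercivity} and~\ref{prop:residual-decay}, and handles the second estimate as you do: $(D_T'')^{*,\mathrm{app}}(\mathbf H'+\mathbf V')$ is supported on the transition annuli with $L^2$ norm $O(T^Ne^{-cT})$, followed by integration by parts against compactly supported $r$, coercivity, and density.

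One small correction to your item (iii). The gauge equation for $\mathbf F_\nu=(0,M_\nu)$ comes from $M_\nu$ commuting with $\Phi_T^{\dagger}$ in the orthonormal decoupled eigenframe. For $\mathbf F_\tau=(M_\tau,0)$ it comes from $\partial\tau=0$. Holomorphy of $\nu$ gives closedness, not the Coulomb condition.
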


\begin{proof}
Write $\mathfrak q=\mathfrak q_{\lambda,q,\delta}$ for the
real Friedrichs form defined above, and set
$\mathfrak q^{\C}(u+\ii v,u+\ii v)=\mathfrak q(u,u)+\mathfrak q(v,v)$
for self-adjoint $u,v$.  Let $A$ be the Chern connection of
$h_T^{\rm app}$, and take adjoints with respect to this metric.
For a compactly supported section $r=u+\ii v$, the quadratic identity
in Lemma~\ref{lem:fixed-linearization} gives
$$
 \mathfrak q^{\C}(r,r)
 =\|d_Ar\|_{L^2}^2+\|[\Phi_T,r]\|_{L^2}^2
                        +\|[\Phi_T^\dagger,r]\|_{L^2}^2.
$$
Here the norms are the zero- and one-form norms from
Section~\ref{sec:global-analysis}.  The K\"ahler identities and
integration by parts give
$$
 2\|\bar\partial_Er\|_{L^2}^2
 =\|d_Ar\|_{L^2}^2-\langle[\ii\star_cF_A,r],r\rangle_{L^2},
$$
while the commutator identity gives
$$
 \|[\Phi_T^\dagger,r]\|_{L^2}^2-\|[\Phi_T,r]\|_{L^2}^2
 =\langle[\ii\star_c[\Phi_T,\Phi_T^\dagger],r],r\rangle_{L^2}.
$$
Since $Q_{\rm app}(r,r)=2\|\bar\partial_Er\|_{L^2}^2
+2\|[\Phi_T,r]\|_{L^2}^2$, these identities imply
\begin{equation}\label{eq:complex-coercivity-identity}
 Q_{\rm app}(r,r)=\mathfrak q^{\C}(r,r)
   -\langle[R_{\lambda,q,\delta},r],r\rangle_{L^2}.
\end{equation}
By Propositions~\ref{prop:uniform-coercivity} and
\ref{prop:residual-decay},
$$
 |\langle[R_{\lambda,q,\delta},r],r\rangle_{L^2}|
 \leq2\|R_{\lambda,q,\delta}\|_{L^\infty,\operatorname{op}}\|r\|_{L^2}^2
 \leq\tfrac12\mathfrak q^{\C}(r,r)
$$
for large $T$.  Consequently the identity on compactly supported complex sections
extends to an identification
$\operatorname{Dom}(\mathfrak q^{\C})=\mathcal Q$ with equivalent
norms.  Thus $Q_{\rm app}$ has the asserted uniform coercivity on
$\mathcal Q$.

Put $\mathbf a=\mathbf H'(\nu,T)+\mathbf V'(\tau,T)$.
On each inner disk the local Coulomb equation holds.  Outside the
cutoff disks, $h_T^{\rm app}=h_q^{\rm dec}$ and the representatives
are $\mathbf F_\nu$ and $\mathbf F_\tau$.  The flat eigenline
connections and harmonicity of $\nu$ and $\tau$ give
$(D_T'')^{*,\mathrm{app}}\mathbf F_\nu=
 (D_T'')^{*,\mathrm{app}}\mathbf F_\tau=0$ there.
Hence $f=(D_T'')^{*,\mathrm{app}}\mathbf a$ is supported on the
fixed transition annuli.  The representative formulas and Toda decay give
$\|f\|_{L^2}\leq CT^Ne^{-cT}\|(\nu,\tau)\|_{\rm sf}$.  For compactly
supported $r$, integration by parts therefore gives
$$
 |\langle\mathbf a,D_T''r\rangle_{\rm app}|
 =|\langle f,r\rangle_{L^2}|
 \leq CT^Ne^{-cT}\|(\nu,\tau)\|_{\rm sf}\|r\|_{L^2}.
$$
Coercivity bounds the last factor by $CQ_{\rm app}(r,r)^{1/2}$.
Density extends the estimate to $\mathcal Q$, and decreasing $c$
absorbs the polynomial factor.
\end{proof}

\begin{prop}
\label{prop:exact-harmonic-projection}
For $x=(\nu,\tau)$ there is a unique $s_x\in\mathcal Q$ such that
\begin{equation}\label{eq:harmonic-projection}
 Q_{\rm ex}(s_x,r)
 =\langle\mathbf H'(\nu,T)+\mathbf V'(\tau,T),D_T''r\rangle_{\rm ex}
\end{equation}
for every $r\in\mathcal Q$.  It satisfies $\|D_T''s_x\|_{\rm ex}\leq Ce^{-cT}\|x\|_{\rm sf}$.
The pair
$$
 \mathbf Z_x=\mathbf H'(\nu,T)+\mathbf V'(\tau,T)-D_T''s_x
$$
is the $h_T$-harmonic representative of $x$, and
$$
 \left|\langle\mathbf Z_{x_1},\mathbf Z_{x_2}\rangle_{\rm ex}
       -\langle x_1,x_2\rangle_{\rm sf}\right|
 \leq Ce^{-cT}\|x_1\|_{\rm sf}\|x_2\|_{\rm sf}.
$$
\end{prop}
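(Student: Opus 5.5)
The plan is Lax--Milgram for $Q_{\rm ex}$ on $\mathcal Q$, with everything transferred from the approximate metric through the uniform smallness of $\gamma_T$.

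\textbf{Step 1: comparing the two metrics.} Proposition~\ref{prop:correction} gives $h_T=h_T^{\rm app}e^{2\gamma_T}$ with $\|\gamma_T\|_{D_{\rm Fr}}\leq Ce^{-cT}$, hence $\|\gamma_T\|_{L^\infty}\leq Ce^{-cT}$. The operator $D_T''$ depends only on the holomorphic data, so $Q_{\rm ex}$ and $Q_{\rm app}$ differ only through the pointwise adjoints. For $e^{\pm 2\gamma_T}$ uniformly close to the identity, the pointwise Hermitian forms on $(\Lambda^{0,1}\oplus\Lambda^{1,0})\otimes\operatorname{End}_0E$ satisfy
$$
 |\langle\mathbf x,\mathbf y\rangle_{\rm ex}-\langle\mathbf x,\mathbf y\rangle_{\rm app}|
 \leq Ce^{-cT}|\mathbf x|_{\rm app}|\mathbf y|_{\rm app}.
$$
The same bound holds for the $L^2$ norms of zero-forms.

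\textbf{Step 2: solving for $s_x$.} By the comparison in Step 1, $Q_{\rm ex}$ is a bounded Hermitian form on $\mathcal Q$. It is coercive, $Q_{\rm ex}(r,r)\geq\frac{c_I}{2}\|r\|_{L^2}^2$, because Lemma~\ref{lem:complex-coulomb-coercivity} gives coercivity of $Q_{\rm app}$ and Step 1 shows $Q_{\rm ex}$ and $Q_{\rm app}$ are uniformly equivalent for large $T$. So $\mathcal Q$ with $Q_{\rm ex}$ is a Hilbert space with norm equivalent to $\|\cdot\|_{\mathcal Q,\rm app}$.

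Write $\mathbf a=\mathbf H'+\mathbf V'$. The right side of \eqref{eq:harmonic-projection} is a bounded antilinear functional of $r$, and it is small:
$$
 \langle\mathbf a,D_T''r\rangle_{\rm ex}
 =\langle\mathbf a,D_T''r\rangle_{\rm app}+O\bigl(e^{-cT}\|\mathbf a\|_{\rm app}Q_{\rm app}(r,r)^{1/2}\bigr).
$$
The first term is bounded by Lemma~\ref{lem:complex-coulomb-coercivity}. The error term is controlled by Step 1 together with Propositions~\ref{lem:horizontal-representative} and~\ref{lem:vertical-representative}, which give $\|\mathbf a\|_{\rm app}\leq C\|x\|_{\rm sf}$. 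Hence the functional has norm $\leq Ce^{-cT}\|x\|_{\rm sf}$. Riesz representation then gives a unique $s_x$ with
$$
 \|D_T''s_x\|_{\rm ex}=Q_{\rm ex}(s_x,s_x)^{1/2}\leq Ce^{-cT}\|x\|_{\rm sf}.
$$

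\textbf{Step 3: $\mathbf Z_x$ is the harmonic representative of $x$.} There are three things to check.
\begin{enumerate}
\item Closedness. $\mathbf Z_x$ is $D_T''$-closed since $(D_T'')^2=0$ and $\mathbf a$ is closed.
\item Coulomb gauge. By construction $\mathbf Z_x$ is $h_T$-orthogonal to $D_T''\mathcal Q\supset D_T''C_c^\infty$, so $(D_T'')^{*,\rm ex}\mathbf Z_x=0$ weakly. Interior elliptic regularity makes it smooth on $X^\circ$, so $\mathbf Z_x$ is the Coulomb-gauge representative in the sense of the definition of $g_{L^2}$.
\item Correct tangent class. This is the step I expect to be the main obstacle. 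We must show that $\mathbf H'$, $\mathbf V'$ and $D_T''s_x$ differ from the spectral deformation $\mathbf F_\nu$ or $\mathbf F_{\tau'}$ only by infinitesimal parabolic gauge transformations allowed in Konno's deformation complex. For the explicit potentials $\chi_0s_\xi$ and $t_\tau$ this follows from the local bounds: they are bounded, vanish or are diagonal-constant in the limit at the punctures, and preserve the flags to the needed order. For $s_x$ I would argue as follows. Konno's $L^2$ harmonic theory identifies the tangent space with $L^2$ closed pairs modulo the closure of $D_T''$ of the gauge domain. By Step 1 and the equivalence of norms in Lemma~\ref{lem:complex-coulomb-coercivity}, that domain coincides with $\mathcal Q$. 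So subtracting $D_T''s_x$ does not change the class. A second argument: the harmonic representative is the unique minimum-norm element in the class, and the class is an affine space modelled on $\overline{D_T''\mathcal Q}$, so $\mathbf Z_x$ is that minimizer.
\end{enumerate}

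\textbf{Step 4: comparing the pairings.} Expand
$$
 \langle\mathbf Z_{x_1},\mathbf Z_{x_2}\rangle_{\rm ex}
 =\langle\mathbf a_1,\mathbf a_2\rangle_{\rm ex}-\langle D_T''s_{x_1},D_T''s_{x_2}\rangle_{\rm ex},
$$
which holds by orthogonality. The second term is $O(e^{-2cT})\|x_1\|_{\rm sf}\|x_2\|_{\rm sf}$ by Step 2. For the first term, Step 1 replaces the exact pairing by the approximate one at cost $Ce^{-cT}\|x_1\|_{\rm sf}\|x_2\|_{\rm sf}$. Proposition~\ref{prop:spectral-pairings} then compares $\langle\mathbf a_1,\mathbf a_2\rangle_{\rm app}$ with $\langle x_1,x_2\rangle_{\rm sf}$. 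Decreasing $c$ absorbs all the constants, which gives the stated estimate.
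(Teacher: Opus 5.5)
Your Steps~1, 2 and~4 match the paper's argument. These are the pointwise comparison through $\|\gamma_T\|_{L^\infty}$, the common completion $\mathcal Q$, Riesz representation with the test $r=s_x$, and the orthogonal expansion of the pairing.

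The gap is in Step~3(iii). The sentence ``by Step~1 and the equivalence of norms in Lemma~\ref{lem:complex-coulomb-coercivity}, that domain coincides with $\mathcal Q$'' does not follow. Those results only compare $Q_{\rm ex}$ with $Q_{\rm app}$. They say nothing about Konno's gauge domain, which is a weighted $W^{2,p}$ space ($1<p<2$) in adapted frames. Nor is the geometric tangent space given as $L^2$ closed pairs modulo $\overline{D_T''\mathcal Q}$; that $L^2$ description is exactly what must be proved. Your ``minimum-norm element'' variant presupposes the same identification. As written, nothing ensures that subtracting $D_T''s_x$, or $D_T''$ of the explicit potentials, stays in the geometric class. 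Nothing ensures either that an $L^2$ Coulomb pair in $\mathbf a+D_T''\mathcal Q$ is the geometric harmonic representative.

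Equality of domains is not needed; one inclusion suffices. The paper writes $\mathbf Z_x=F+D_T''(s_{\rm app}-s_x)$. Here $F=\mathbf F_\nu+\mathbf F_{\tau'}$ is a genuine deformation of the strongly parabolic Higgs bundle, and $s_{\rm app}=-\chi_0s_\xi+t_\tau$ is already known to lie in $\mathcal Q$. So no flag-compatibility analysis of the explicit potentials is required.

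The paper then takes Konno's Hodge decomposition $F=\mathbf Z_{\rm geo}+D_T''u$. It transports this to $h_T$ via $g=e^{-\ell/2}$, after identifying Konno's harmonic metric with $h_T$ by uniqueness. It then proves $u\in\mathcal Q$ in three steps:
\begin{enumerate}
\item Konno's description of the weighted spaces gives componentwise $W^{2,p}$ regularity.
\item Sobolev embedding then gives $|u|_{h_T}\in L^\infty$ and $D_T''u\in L^{2p/(2-p)}\subset L^2$.
\item The logarithmic cutoff criterion places $u$ in $\mathcal Q$.
\end{enumerate}
Now set $w=u+s_{\rm app}-s_x\in\mathcal Q$. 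Both $\mathbf Z_x$ and $\mathbf Z_{\rm geo}$ are orthogonal to $D_T''\mathcal Q$, the latter by density from compact supports. Hence $\|\mathbf Z_x-\mathbf Z_{\rm geo}\|_{\rm ex}^2=\langle\mathbf Z_x-\mathbf Z_{\rm geo},D_T''w\rangle_{\rm ex}=0$. This inclusion argument is the missing step in your proposal.
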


\begin{proof}
Proposition~\ref{prop:correction} gives
$$
 h_T(u,v)=h_T^{\rm app}(e^{\gamma_T}u,e^{\gamma_T}v),
 \qquad \|\gamma_T\|_{L^\infty}\leq Ce^{-cT}.
$$
For an endomorphism, the pointwise comparison follows by conjugation
with $e^{\gamma_T}$.  The operator $e^{\gamma_T}$ and its inverse have
norm at most $e^{\|\gamma_T\|_{L^\infty}}$.  Applying this to the two components of
$D_T''r$ gives
$$
 e^{-4\|\gamma_T\|_{L^\infty}}Q_{\rm app}(r,r)
 \leq Q_{\rm ex}(r,r)
 \leq e^{4\|\gamma_T\|_{L^\infty}}Q_{\rm app}(r,r).
$$
The same comparison holds for the zero-form norm.  Thus the exact and
approximate form norms have the same completion $\mathcal Q$.  If
$\mathbf a_x=\mathbf H'(\nu,T)+\mathbf V'(\tau,T)$, then
$$
 |\langle\mathbf a_x,D_T''r\rangle_{\rm ex}
       -\langle\mathbf a_x,D_T''r\rangle_{\rm app}|
 \leq Ce^{-cT}\|x\|_{\rm sf}Q_{\rm app}(r,r)^{1/2}.
$$
Lemma~\ref{lem:complex-coulomb-coercivity} gives the same bound for the
approximate pairing and transfers coercivity to $Q_{\rm ex}$.  The Riesz
representation theorem on the Hilbert space $(\mathcal Q,Q_{\rm ex})$
gives $s_x$.  Testing its defining
equation with $r=s_x$ yields
$$
 Q_{\rm ex}(s_x,s_x)
 \leq Ce^{-cT}\|x\|_{\rm sf}Q_{\rm ex}(s_x,s_x)^{1/2},
$$
which is the stated norm bound.
The pair $\mathbf Z_x$ is closed and orthogonal to $D_T''\mathcal Q$.

To identify the deformation class represented by $\mathbf Z_x$,
fix the parameter and put $F=\mathbf F_\nu+\mathbf F_{\tau'}$ and
$s_{\rm app}=-\chi_0s_\xi+t_\tau$.  Then
$s_{\rm app},s_x\in\mathcal Q$ and
$\mathbf Z_x=F+D_T''(s_{\rm app}-s_x)$, where $F$ is a deformation
of the strongly parabolic Higgs bundle: its horizontal part differentiates
$\Phi_{\lambda,q}$ with $q$ fixed, and its vertical part was obtained from the
localized spectral family.  For the fixed weights, there is a
compatible exponent $1<p<2$, chosen sufficiently close to one as in
\cite[Definition~2.5]{Konno1993}.  The harmonic metric supplied by
\cite[Theorem~4.5]{Konno1993} has the form $H=Ke^\ell$, where $K$
is the reference metric.  With $g=e^{-\ell/2}$, the pair
$(g^{-1}\bar\partial_Eg,g^{-1}\Phi_Tg)$ is harmonic for $K$.
Uniqueness in Proposition~\ref{prop:correction} identifies $H=h_T$,
so $g$ is an isometry from the reference bundle with metric $K$ to
the bundle with metric $h_T$.  Conjugation by $g$ transports the
Hodge decomposition in \cite[Proposition~3.6(2)]{Konno1993} to
$h_T$ and gives
$$
 F=\mathbf Z_{\rm geo}+D_T''u.
$$
By \cite[Section~3.3]{Konno1993}, the harmonic term
$\mathbf Z_{\rm geo}$ represents the geometric tangent class of $F$.

In the adapted unitary frames for the reference metric $K$, the
Sobolev spaces in this decomposition use ordinary $W^{j,p}$ norms
on weight-preserving entries and weighted Sobolev norms on the other
entries.  For $1<p<2$, \cite[Lemma~2.2 and Section~2.2]{Konno1993}
identifies the weighted second-order space on a disk with
$\{v\in W^{2,p}:v(0)=0\}$ and the weighted first-order space with
$W^{1,p}$.  Thus $u$ has componentwise $W^{2,p}$ regularity, and
$D_T''u$ has componentwise $W^{1,p}$ regularity in these frames.  Sobolev embedding
gives $|u|_{h_T}\in L^\infty$ and
$D_T''u\in L^{2p/(2-p)}\subset L^2$.  The isometric transport and the
conformal invariance of the one-form $L^2$ norm preserve these
properties.  The logarithmic cutoff criterion therefore gives
$u\in\mathcal Q$.  Both $\mathbf Z_x$ and $\mathbf Z_{\rm geo}$ are
orthogonal to $D_T''\mathcal Q$, the latter by density from compact
supports.  Taking $w=u+s_{\rm app}-s_x\in\mathcal Q$, we obtain
$$
 \mathbf Z_x-\mathbf Z_{\rm geo}=D_T''w,
 \qquad
 \|\mathbf Z_x-\mathbf Z_{\rm geo}\|_{\rm ex}^2
 =\langle\mathbf Z_x-\mathbf Z_{\rm geo},D_T''w\rangle_{\rm ex}=0.
$$
At each fixed parameter, this identifies the geometric tangent class.
The uniform estimates follow from coercivity of $Q_{\rm ex}$ and
the preceding bound for $D_T''s_x$.

Finally, orthogonality gives
\begin{equation}\label{eq:projected-pairing}
 \langle\mathbf Z_{x_1},\mathbf Z_{x_2}\rangle_{\rm ex}
 =\langle\mathbf a_{x_1},\mathbf a_{x_2}\rangle_{\rm ex}
  -Q_{\rm ex}(s_{x_1},s_{x_2}).
\end{equation}
The last term is $O(e^{-2cT})\|x_1\|_{\rm sf}\|x_2\|_{\rm sf}$.
Pointwise metric equivalence changes the first pairing to its
approximate value with an $O(e^{-cT})$ error.  Proposition~\ref{prop:spectral-pairings}
then proves the claimed comparison.
\end{proof}

\begin{thm}
\label{thm:metric-comparison}
For every $I\Subset(0,1/3)$ there are $C_I,c_I,T_I>0$ such that
\begin{equation}\label{eq:hitchin-semiflat-comparison}
 \sup_{\substack{\delta\in I\\(\phi,q)\in S^1\times\Pic^0(Y)}}
 |g_{L^2,\delta}-g_{{\rm sf},\delta}|_{g_{{\rm sf},\delta}}
 \leq C_Ie^{-c_IT},\qquad T\geq T_I.
\end{equation}
\end{thm}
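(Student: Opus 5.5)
The comparison will follow from Proposition~\ref{prop:exact-harmonic-projection} once we identify which tangent vectors the spectral data $x=(\nu,\tau)$ represent, and check that the Hermitian pairing there is the one defining $g_{L^2,\delta}$ and $g_{{\rm sf},\delta}$. At a point $\widetilde\Psi_\delta(\lambda,q)$ with $T=|\lambda|$ large, a real tangent vector of $\C^\times\times\Pic^0(Y)$ is a pair $(\dot\lambda,\dot q)$. The discussion at the start of this section shows that $d\widetilde\Psi_\delta$ sends it to the deformation class of $\mathbf F_\nu+\mathbf F_\tau$. Here $\nu=\dot\lambda\alpha=\xi e^{\ii\phi}\alpha$ with $\dot T=\Re\xi$ and $T\dot\phi=\Im\xi$, and $\tau$ is the harmonic $(0,1)$-form representing $\dot q$. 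This map is a real-linear isomorphism onto the tangent space of $\calM_\delta^{\rm reg}$, and by definition the semi-flat metric is
\[
 g_{{\rm sf},\delta}(x,x)=\Re\langle x,x\rangle_{\rm sf}=\|\nu\|_Y^2+\|\tau\|_Y^2 .
\]
Proposition~\ref{prop:exact-harmonic-projection} shows that $\mathbf Z_x$ is the $h_T$-harmonic representative of the geometric class of $x$. Since $h_T$ is the normalized harmonic metric (Proposition~\ref{prop:correction}), the real part of $\langle\cdot,\cdot\rangle_{\rm ex}$ on these representatives is $g_{L^2,\delta}$.

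\textbf{Step 1: real linearity of $x\mapsto\mathbf Z_x$.} The projection $s_x$ is defined by a Riesz representation that depends linearly on $\mathbf H'+\mathbf V'$. The representatives $\mathbf H'(\nu,T)$ and $\mathbf V'(\tau,T)$ are real linear in $(\nu,\tau)$: indeed $\mathbf H'$ is complex linear in $\xi$, and $S_p(\tau,T)$ is real linear in $b(\tau)$. Hence $x\mapsto\mathbf Z_x$ is real linear.

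\textbf{Step 2: the pointwise bound.} Taking real parts in the estimate of Proposition~\ref{prop:exact-harmonic-projection} gives
\[
 \bigl|g_{L^2,\delta}(x_1,x_2)-g_{{\rm sf},\delta}(x_1,x_2)\bigr|
 \leq Ce^{-cT}\|x_1\|_{\rm sf}\|x_2\|_{\rm sf}.
\]
This holds for all real tangent vectors $x_1,x_2$, since $\Re\langle x_1,x_2\rangle_{\rm sf}$ polarizes $g_{\rm sf}$. Taking the supremum over unit $g_{\rm sf}$-vectors bounds the operator norm of the symmetric form $g_{L^2}-g_{\rm sf}$ relative to $g_{\rm sf}$. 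That operator norm is equivalent, with a dimension-four constant, to the tensor norm $|\cdot|_{g_{\rm sf}}$.

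\textbf{Step 3: uniformity and the quotient.} We check that every constant used has the required uniformity. This includes Propositions~\ref{prop:residual-decay}, \ref{prop:uniform-coercivity}, \ref{prop:correction}, \ref{lem:horizontal-representative}, \ref{lem:vertical-representative}, \ref{prop:spectral-pairings} and Lemma~\ref{lem:complex-coulomb-coercivity}. All of them hold uniformly for $\delta\in I$ and $(\phi,q)\in S^1\times\Pic^0(Y)$ for $T\geq T_I$, the maximum of the finitely many thresholds. We then take $c_I$ to be the minimum of the rates, after absorbing polynomial factors. Compatibility with the $\mu_3$-action (Proposition~\ref{prop:correction}) means the estimate descends through $\Psi_\delta$, although the statement is already on the cover.

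\textbf{Main obstacle.} I expect the main difficulty to be the identification in the first paragraph: that the vertical representative built from the localized form $\tau'$, and the horizontal one with $\nu=\dot\lambda\alpha$, really realize $d\widetilde\Psi_\delta$ of the coordinate vectors. In particular, the normalization of $\tau$ against $\dot q$ must match the flat metric on $\Pic^0(Y)$ used in $g_{\rm sf}$. This was addressed in the class identification of Proposition~\ref{prop:exact-harmonic-projection} via Konno's Hodge decomposition. I would only re-verify that the factor conventions ($1/3$ from the cover, the factor $2\ii$) make $\langle\cdot,\cdot\rangle_{\rm sf}$ exactly the semi-flat form of the Definition, which the paper cites to \cite{Fredrickson-1810.01554,Mochizuki23}.
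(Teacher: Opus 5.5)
Your proposal is correct and takes essentially the same route as the paper. The paper's proof also invokes Proposition~\ref{prop:exact-harmonic-projection}, after noting that the Toda, residual, coercivity and correction constants are uniform on $I\times S^1\times\Pic^0(Y)$, to obtain $|g_{L^2,\delta}(x_1,x_2)-g_{{\rm sf},\delta}(x_1,x_2)|\leq C_Ie^{-c_IT}\|x_1\|_{\rm sf}\|x_2\|_{\rm sf}$, and then evaluates in a semi-flat orthonormal frame. Your Step~1 on real linearity and your re-check of the tangent-class identification are harmless extras, since that proposition already contains both.
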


\begin{proof}
The Toda decay estimates, the residual bound, coercivity, and the
correction estimate are uniform on $I\times S^1\times\Pic^0(Y)$.  The exponents controlling
the inner boundary terms are bounded below by a positive number
depending only on $I$.
The spectral constructions use a finite cover of the compact parameter
set, so their constants are uniform as well.
Proposition~\ref{prop:exact-harmonic-projection} consequently gives
$$
 |g_{L^2,\delta}(x_1,x_2)-g_{{\rm sf},\delta}(x_1,x_2)|
 \leq C_Ie^{-c_IT}\|x_1\|_{{\rm sf},\delta}\|x_2\|_{{\rm sf},\delta}.
$$
Evaluation in a semi-flat orthonormal frame, with a dimensional constant
absorbed into $C_I$, proves \eqref{eq:hitchin-semiflat-comparison}.
\end{proof}

\section{ALG asymptotics and compactification}
\label{sec:alg}

The metric comparison identifies the leading term on the cubic
cover $\C_\lambda^\times\times\Pic^0(Y)$ of the regular moduli space.
We first identify the elliptic compactification, then compute the
periods of the spectral differential, the integral lattice of the
Jacobian, and the covering action.  These determine the metric and
holomorphic symplectic form in the standard ALG coordinates.
We then prove completeness and obtain decay of all derivatives by
local Ricci-flat K\"ahler estimates.  The complex surface and flat end
model are independent of the parabolic weight within the chamber.

\subsection{Symplectic form and elliptic compactification}

Let $S=\mu_3\text{-Hilb}(T^*Y)$ denote the component containing
reduced free three-point orbits for the cotangent lift of $\sigma$.
Its symplectic form $\omega_S$ is normalized as in
\cite[Definition~2.4]{Jia26}.

\begin{prop}
\label{prop:compactification}
The surface $S$ is isomorphic to $\calM_\delta$ for every $0<\delta<1/3$.  The relative
Fourier--Mukai isomorphism followed by tensoring with $L_\delta$ pulls
$\Omega_\delta^{\rm Serre}$ back to $\omega_S$.
\end{prop}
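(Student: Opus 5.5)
The plan is to assemble the isomorphism from three identifications and then track the symplectic forms through each. First, Proposition~\ref{prop:chamber} and the discussion after it identify $\calM_\delta$, for every $\delta\in(0,1/3)$, with the coarse moduli space of stable strongly parabolic Higgs bundles of degree $-3$ with the original weights and trivialized $N_{\mathcal E}$. Tensoring by $L_\delta$ realizes this identification. On the underlying algebraic data, stability does not depend on $\delta$ in the chamber, so the complex surface is independent of $\delta$, and it suffices to work with one fixed $\delta$. Second, I would invoke Groechenig's orbifold description \cite[Theorem~4.1]{Gro12}. There, parabolic Higgs bundles on $(\Pp^1,D)$ with full flags and rational weights in $\frac13\Z$ correspond to $\mu_3$-equivariant Higgs bundles on the elliptic curve $Y$, the cyclic triple cover branched at $D$ with deck group generated by $\sigma$. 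For rank three with totally ramified spectral data, the relative Fourier--Mukai transform sends these to $\mu_3$-equivariant torsion sheaves of length three on $T^*Y$, that is, to points of $\mu_3\text{-Hilb}(T^*Y)$. I would check that our weights $(0,1/3,2/3)$ and the strongly parabolic condition match the equivariant structure Groechenig uses. The residue is regular nilpotent with image flag as in Lemma~\ref{lem:spectral-local-form}, and the deck action on $(w_p^{-1},1,w_p)$ has eigenvalues $(\omega,1,\omega^{-1})$, as in the proof of Proposition~\ref{prop:spectral-quotient}. Under these checks, the component containing reduced free orbits corresponds exactly to the regular locus described by Propositions~\ref{prop:spectral} and~\ref{prop:spectral-quotient}. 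That component is $S$.

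For the holomorphic symplectic forms, I would first compare on the dense regular locus, since two holomorphic $2$-forms that agree on a dense open set agree everywhere. On the regular locus both sides are computed in the spectral coordinates $(\lambda,q)$. $\Omega^{\rm Serre}_\delta$ is defined by the trace and Serre duality on the deformation complex. Via BNR it becomes the pairing between the variation of the spectral differential, $\nu=\dot\lambda\alpha$, and the variation $\tau\in H^1(Y,\mathcal O)$ of the line bundle. That pairing is $\int_Y\nu\wedge\tau$ up to the normalization factor coming from \eqref{eq:holomorphic-symplectic-pairing} and the covering degree $3$. On the Hilbert scheme side, $\omega_S$ restricted to free orbits is the descended sum of the canonical form of $T^*Y$ over the three points, normalized as in \cite[Definition~2.4]{Jia26}. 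Under Fourier--Mukai this becomes the same Serre pairing on the Picard fibration of spectral curves, by \cite[Lemma~3.2 and the proof of Proposition~3.3]{Jia26}. The trace identity in the proof of Proposition~\ref{prop:chamber} shows that tensoring by $L_\delta$ preserves the trace pairing, so it does not change $\Omega^{\rm Serre}_\delta$.

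The main obstacle is matching normalizations. Jia's lemma is stated for an equivariant Fourier--Mukai setup whose conventions for the Poincar\'e sheaf, the sign of Serre duality, and the factor from $\mu_3$-invariants may differ from ours by a constant. I would fix this constant by a direct computation at a single regular point. I would take the horizontal vector $\partial_\lambda$ and a vertical vector $\tau$ and evaluate both forms explicitly: $\Omega^{\rm Serre}_\delta$ via \eqref{eq:holomorphic-symplectic-pairing} applied to $\mathbf F_\nu$ and $\mathbf F_\tau$, and $\omega_S$ via the point of $T^*Y$ at which the fiber coordinate equals $\lambda\alpha$. The factor $1/3$ from pushing forward integrals, together with the factor $3$ from summing over the orbit, should cancel, leaving the two forms equal. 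If a residual constant remains, I would absorb it into the normalization of $\omega_S$, which the statement takes from \cite{Jia26}.
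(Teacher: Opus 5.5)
Your route to the isomorphism matches the paper: Groechenig's theorem combined with Proposition~\ref{prop:chamber}. The symplectic half, however, has a genuine gap, exactly at the point you call the main obstacle.

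The statement fixes $\omega_S$ by \cite[Definition~2.4]{Jia26} and asserts equality on the nose. So a residual constant cannot be ``absorbed into the normalization of $\omega_S$''; you must show that it equals $1$. Your single-point check does not do this as stated. Evaluating $\omega_S$ at ``the point of $T^*Y$ whose fiber coordinate is $\lambda\alpha$'' presupposes the differential of the Fourier--Mukai map in the coordinates $(\lambda,q)$: which point of $Y$ corresponds to $q$, and with which Poincar\'e and Abel--Jacobi sign. Nothing in your plan computes that differential.

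The missing idea is to compute on the orbifold cover $Y$. The transform of a free orbit $\{x,\sigma x,\sigma^2x\}$ with nonzero cotangent coordinate is the direct sum of its three rank-one transforms, whose Higgs fields are distinct constant multiples of $\alpha$. The differential of each off-diagonal deformation complex is then multiplication by a nowhere-vanishing one-form, so that complex is acyclic. Hence the tangent space at the orbit is the invariant part of the sum of the three rank-one deformation spaces, and the differential of the transform sends $u\in T_x(T^*Y)$ to $((\sigma^j)_*u)_{j=0}^2$. The orbifold integration factor and $\sigma$-invariance of $\omega_{T^*Y}$ then give $\frac13\sum_{j=0}^2\omega_{T^*Y}\bigl((\sigma^j)_*u,(\sigma^j)_*v\bigr)=\omega_{T^*Y}(u,v)$. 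This turns your heuristic that ``$1/3$ and $3$ cancel'' into a proof, and agreement on this dense open set finishes as you say.

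Three further points need repair.

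\emph{Full versus trace-free complex.} The identifications of \cite{Gro12,Jia26} concern rank-three orbifold Higgs bundles and their full parabolic deformation complex. But $\Omega^{\rm Serre}_\delta$ is defined on $[\ParEnd_0E\to\SParEnd_0E\otimes K_X(D)]$, and the trace identity for tensoring by $L_\delta$ does not bridge this. The paper splits off $[\mathcal O_X\xrightarrow{0}K_X]$ via $A\mapsto A-\frac13\tr(A)\Id$ and uses $H^1(\Pp^1,\mathcal O)\oplus H^0(\Pp^1,K_X)=0$ to identify tangent spaces and Serre pairings. It also notes that these deformation sheaves and their trace pairing depend only on the flags and the Higgs field. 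That observation is what justifies reducing the \emph{form}, and not just the surface, to a single $\delta$.

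\emph{The weight hypothesis.} The relevant hypothesis of \cite[Theorem~4.1]{Gro12} is the perturbed final weight $2/3+\delta\in(2/3,1)$ at $0$, not weights in $\frac13\Z$. At $\delta=0$ the parabolic degrees listed in Proposition~\ref{prop:chamber} can vanish, and the match with the stability defining the $\mu_3$-Hilbert scheme is lost.

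\emph{$S$ versus the regular locus.} $S$ corresponds to all of $\calM_\delta$, not to the regular locus. The regular locus is the image of free orbits with nonzero cotangent coordinate; free orbits on the zero section lie in the nilpotent fiber. This is harmless for your density argument, but the sentence as written is false.
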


\begin{proof}
\cite[Theorem~4.1]{Gro12} applies to the orbifold
$[Y/\mu_3]$ with full flags, weights $(0,1/3,2/3)$ at $1,\infty$,
and final weight in $(2/3,1)$ at the distinguished point $0$.
Our original weights satisfy these conditions.  The underlying degree
is $-3$, as determined by degree zero for the unperturbed orbifold
weights.  Proposition~\ref{prop:chamber} then gives the isomorphism
with $\calM_\delta$ and preserves the deformation pairing.  The full parabolic deformation complex splits as
\begin{equation}\label{eq:trace-deformation-splitting}
 \begin{aligned}
 &[\ParEnd E\xrightarrow{\ad\Phi}\SParEnd E\otimes K_X(D)]\\
 &\qquad\simeq
 [\ParEnd_0E\xrightarrow{\ad\Phi}\SParEnd_0E\otimes K_X(D)]
 \oplus[\mathcal O_X\xrightarrow0K_X].
 \end{aligned}
\end{equation}
Indeed, the projections are $A\mapsto A-\frac13\tr(A)\Id$ and
$B\mapsto B-\frac13\tr(B)\Id$; the trace of a strongly parabolic
Higgs field is holomorphic.  Since
$H^1(\Pp^1,\mathcal O)\oplus H^0(\Pp^1,K_X)=0$, inclusion of the
trace-free complex identifies the tangent spaces and their Serre
pairings.  We apply the calculation in \cite[Lemma~3.2 and the proof
of Proposition~3.3]{Jia26} to this Fourier--Mukai isomorphism with the
weights fixed above.  For each fixed Higgs bundle with full flags, these deformation sheaves
and their trace pairing depend on the flags and the Higgs field.
They are unchanged when the numerical weights vary within the chamber.  The Fourier--Mukai construction gives an orbifold Higgs
bundle on $[Y/\mu_3]$.  On its cover $Y$, a free cotangent orbit
transforms into the direct sum of its three rank-one transforms.
At a nonzero cotangent coordinate, their eigenforms are distinct.
Each off-diagonal deformation differential is therefore multiplication
by a nowhere-vanishing holomorphic one-form, so its complex is acyclic.
The three diagonal complexes give the rank-one deformation spaces.
The rank-one Higgs moduli space is $T^*\Pic^0(Y)$, identified with
$T^*Y$ by the cotangent lift of the Abel--Jacobi isomorphism with
origin $r_0$.  After pullback to the orbifold cover $Y$, the
differential of the Fourier--Mukai isomorphism sends a tangent vector
$u$ at one point of the orbit to the three diagonal variations
$((\sigma^j)_*u)_{j=0}^2$.
Writing $\omega_{T^*Y}$ for the canonical holomorphic symplectic
form on $T^*Y$, the pairings on the three rank-one factors and the
integration factor $1/3$ give
\begin{equation}\label{eq:symplectic-orbit-pairing}
 \frac13\sum_{j=0}^2
 \omega_{T^*Y}\bigl((\sigma^j)_*u,(\sigma^j)_*v\bigr)
 =\omega_{T^*Y}(u,v).
\end{equation}
Here the cotangent lift of $\sigma$ preserves $\omega_{T^*Y}$.
This is the form descending to the free-orbit locus of $S$.
The two regular forms thus agree on a dense open subset and hence on
the smooth surface $S$.
\end{proof}

The differential $\alpha$ trivializes $T^*Y$.  Since
$$
 f_\lambda^*\theta=\lambda\alpha,
 \qquad \alpha^3=-\pi^*q_1,
$$
the quotient coordinate is $t=\lambda^3$, in agreement with
$\det(\eta-\Phi)=\eta^3+tq_1$; compare
\cite[Theorem~3.8]{Jia26}.

The cotangent action of $\mu_3$ extends to $Y\times\Pp^1$.
In the minimal resolution of its quotient, let $D_0,D_\infty$ denote
the strict transforms of the images of $Y\times\{0\}$ and
$Y\times\{\infty\}$.  Over the three singularities at zero, let
$E_{1j}\cup E_{2j}$ be the exceptional chains, indexed so that
$E_{1j}$ meets $D_0$.  Let $F_j$ be the exceptional curve over the
$j$-th singularity at infinity, for $1\leq j\leq3$.
All these curves are smooth and rational.  The exceptional curves
satisfy $E_{1j}^2=E_{2j}^2=-2$ and $F_j^2=-3$, as follows from the
local quotient types $\frac13(1,2)$ at zero and $\frac13(1,1)$ at
infinity; see \cite[Lemma~6.9]{Huang26}.

\begin{thm}
\label{thm:elliptic-compactification}
The minimal resolution above is a smooth projective rational
compactification of $S$, equipped with an elliptic fibration whose
only singular fibers are
\begin{equation}\label{eq:singular-fibers}
 3D_0+\sum_{j=1}^3(2E_{1j}+E_{2j})
 \quad\text{and}\quad
 3D_\infty+\sum_{j=1}^3F_j.
\end{equation}
The first has Kodaira type $IV^*$.  The component $D_\infty$ is a
$(-1)$-curve; its contraction gives a relatively minimal elliptic
surface with a fiber of type $IV$ at infinity.
\end{thm}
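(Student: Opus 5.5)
The plan is to realize everything on the quotient $(Y\times\Pp^1)/\mu_3$ and compute directly. First we fix the action. Since $\alpha$ trivializes $T^*Y$ and $\sigma^*\alpha=\omega\alpha$, the cotangent lift of $\sigma$ becomes $(y,\eta)\mapsto(\sigma(y),\omega^{-1}\eta)$ in the fiber coordinate $\eta$ with respect to $\alpha$. This extends to $Y\times\Pp^1$. The invariant function $t=\eta^3$ (up to the constant fixed by $f_\lambda^*\theta=\lambda\alpha$) defines a morphism
$$
 (Y\times\Pp^1)/\mu_3\longrightarrow\Pp^1_t .
$$
We compose it with the minimal resolution. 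The action is free off the six points $\{r_0,r_1,r_\infty\}\times\{0,\infty\}$, because $\sigma$ fixes exactly the three ramification points and $\eta\mapsto\omega^{-1}\eta$ fixes only $0$ and $\infty$. Hence every fiber over $t\neq0,\infty$ is a copy of $Y$, which is smooth of genus one.

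The next step is the compactification statement. Over $\C_t$ the quotient is $T^*Y/\mu_3$. Near the zero section the action preserves the symplectic form, so it is of type $\tfrac13(1,2)$, and its minimal resolution is crepant. That resolution equals $\mu_3$-Hilb, by \cite[Proposition~6.3 and Lemma~6.10]{Huang26}. So the minimal resolution of the compact quotient contains $S$ as the complement of the fiber over $t=\infty$. It is smooth and projective. It is rational, because the quotient is birationally ruled over $Y/\mu_3\simeq\Pp^1$: Riemann--Hurwitz with three fixed points gives genus zero.

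Then we compute the two special fibers. Over $t=0$, the function $t=\eta^3$ vanishes to order $3$ along the image of $Y\times\{0\}$. We read off the orders along the exceptional curves $E_{1j},E_{2j}$ by a toric computation at a $\tfrac13(1,2)$ point, in invariant coordinates $(x,\eta)$ with $\eta$ of weight $2$ in the normalization where $x$ has weight $1$. The Hirzebruch--Jung chain gives coefficients $2$ on the curve adjacent to $D_0$ and $1$ on the far curve. At infinity we use the coordinate $1/\eta$, of the opposite weight, so the points are of type $\tfrac13(1,1)$. The single $(-3)$-curve $F_j$ then gets coefficient $1$. This yields the two divisors in \eqref{eq:singular-fibers}.

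The self-intersections of $D_0$ and $D_\infty$ follow from $\mathrm{Fiber}\cdot D=0$:
$$
 3D_0^2+\sum_j2=0\ \Rightarrow\ D_0^2=-2,
 \qquad
 3D_\infty^2+\sum_j1=0\ \Rightarrow\ D_\infty^2=-1 .
$$
So the fiber over zero has a central $(-2)$-curve of multiplicity $3$ with three arms $2E_{1j}+E_{2j}$ of $(-2)$-curves. This is type $IV^*$, whose dual graph is affine $E_6$. We contract the $(-1)$-curve $D_\infty$. Each $F_j$ meets $D_\infty$ transversally once, so after the contraction $F_j^2=-3+1=-2$, and the three images pass through one point with distinct tangents. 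That configuration is Kodaira type $IV$. The resulting surface is relatively minimal, since no fiber component is a $(-1)$-curve any longer.

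To rule out other singular fibers we use an Euler characteristic check. A rational elliptic surface has $e=12$, and $e(IV^*)+e(IV)=8+4=12$, which agrees with the smooth fibers found above. The main obstacle is the local toric bookkeeping. We must match the weights of $\eta$ with the correct quotient types $\tfrac13(1,2)$ versus $\tfrac13(1,1)$, and get the multiplicities $2,1$ oriented correctly along each chain. I would carry this out explicitly in the toric fan of each singularity, cross-checking against \cite[Lemma~6.9]{Huang26}.
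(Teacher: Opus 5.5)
Your proposal is correct, and it reaches the statement by a different route from the paper. The paper's proof is a citation. It identifies $[Y/\mu_3]$ with $\Pp^1_{3,3,3}$ and imports the compactification, the elliptic morphism, both special fibers and the contraction of $D_\infty$ from \cite[Theorems~1.5--1.6, Proposition~6.3 and Lemma~6.10]{Huang26}. It then matches the base through $t=\lambda^3$.

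You instead derive everything on $(Y\times\Pp^1)/\mu_3$ from the single input $\sigma^*\alpha=\omega\alpha$:
\begin{itemize}
\item Freeness off the six points $\{r_p\}\times\{0,\infty\}$ shows that every fiber over $t\ne0,\infty$ is a copy of $Y$.
\item The eigenweights $(1,2)$ and $(1,1)$ give the two quotient types.
\item The orders of $t=\eta^3$ along the Hirzebruch--Jung rays $\tfrac13(1,2)$, $\tfrac13(2,1)$ and $\tfrac13(1,1)$ give multiplicities $2$, $1$ and $1$. The coefficient-$2$ curve is the one adjacent to $D_0$.
\item Intersecting the fiber divisor with $D_0$ and $D_\infty$ gives $D_0^2=-2$ and $D_\infty^2=-1$.
\end{itemize}
These numbers are right. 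The blow-down step is also right: after contraction $F_j'^2=-2$, and the three curves meet pairwise with intersection number $1$ at a single point, which is type $IV$.

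The only external input you still need is that the crepant resolution of $T^*Y/\mu_3$ is the $\mu_3$-Hilbert scheme component $S$. This is the standard Ito--Nakamura/Bridgeland--King--Reid fact for actions through $\mathrm{SL}_2(\C)$.

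Your route independently verifies the orientation of the chains and the self-intersection numbers, which the paper only asserts by reference. It also avoids translating between Huang's orbifold-Hilbert-scheme conventions and the present ones. The paper's route is shorter and comes with the identification with Huang's compactification of the orbifold Hilbert scheme.

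Two small remarks:
\begin{itemize}
\item The Euler characteristic step is redundant, because freeness already gives smoothness of the other fibers. It is still a good consistency check: $e=\tfrac13(0-6)+6+6+3=13$ before the contraction and $12$ after.
\item For rationality, say why the projection to $Y/\mu_3\simeq\Pp^1$ is birationally ruled. Its generic fiber is a smooth conic over $\C(z)$, hence isomorphic to $\Pp^1$ by Tsen's theorem.
\end{itemize}
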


\begin{proof}
The total ramification of $Y\to\Pp^1$ over $0,1,\infty$ identifies
$[Y/\mu_3]$ with $\Pp^1_{3,3,3}$.  The orbifold Hilbert scheme
compactification and its elliptic morphism are therefore those of
\cite[Theorems~1.5--1.6 and Proposition~6.3]{Huang26}.  The two fibers
and the contraction at infinity are given by
\cite[Lemma~6.10]{Huang26}.  The relation $\lambda^3=t$ identifies their
base with the Hitchin base while preserving the marked values zero and
infinity.  In particular, the stated central fiber is the nilpotent
fiber of the Hitchin morphism.
\end{proof}

The compactification in this theorem retains the boundary component
$D_\infty$.  Its contraction is supported entirely in the boundary,
so it leaves the open surface $S$ unchanged.  On the relatively minimal
model, the boundary is the type-$IV$ fiber; the central type-$IV^*$
fiber remains unchanged and has dual intersection graph of type
$\widetilde E_6$.
This specifies the algebraic configuration entering the type $E_6$
in Theorem~\ref{thm:main}.  The analytic ALG parameters will be fixed
by the metric and period calculation below.

\subsection{The flat metric on the end}

\begin{prop}
\label{prop:flat-end}
For every $R>0$, the quotient isomorphism $\Psi_\delta$ restricts to
a biholomorphism
$$
 \Psi_\delta:
 \bigl(\{|\lambda|>R\}\times\Pic^0(Y)\bigr)/\mu_3
 \xrightarrow{\sim}h^{-1}\{|t|>R^3\},
$$
where $(\lambda,q)\mapsto(\omega\lambda,\omega^{-1}q)$ acts freely.
The sublevel sets of $|h|$ form a cofinal compact exhaustion, and
$\calM_\delta$ has one end.
\end{prop}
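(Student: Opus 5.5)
The plan is to deduce everything from Proposition~\ref{prop:spectral-quotient} and the structure of the Hitchin map $h$.

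\emph{The biholomorphism.} By Proposition~\ref{prop:spectral-quotient}, $\Psi_\delta$ is a biholomorphism from $(\C^\times\times\Pic^0(Y))/\mu_3$ onto $\calM_\delta^{\rm reg}$. It intertwines the coordinate $t=\lambda^3$ with $h$, since $\det(\eta-\Phi_{\lambda,q})=\eta^3+\lambda^3q_1$. Moreover, every fiber $h^{-1}(t)$ with $t\ne0$ is regular, because the spectral curve is smooth for every $t\neq0$. Hence $h^{-1}(\C^\times)=\calM_\delta^{\rm reg}$, and restricting to the $\mu_3$-invariant open set $\{|\lambda|>R\}$ gives the stated biholomorphism onto $h^{-1}\{|t|>R^3\}$.

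\emph{Freeness.} The generator acts on the first factor by $\lambda\mapsto\omega\lambda$. Since $\lambda\neq0$ and $\omega\neq1$, no nontrivial element fixes any point, so the action is free on $\C^\times\times\Pic^0(Y)$.

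\emph{Exhaustion and ends.} The main point is properness of $h$. I would obtain it from Proposition~\ref{prop:compactification} and Theorem~\ref{thm:elliptic-compactification}. These identify $\calM_\delta\simeq S$ with the complement of the boundary divisor in a projective elliptic surface $\overline S\to\Pp^1$, and the boundary is the full fiber over $\infty$. The elliptic fibration restricted to $S$ is then proper over $\C$ and agrees with $h$ under $t=\lambda^3$, as noted in that theorem's proof. Alternatively, one can cite properness of the Hitchin map for stable parabolic Higgs bundles at generic weights; the chamber statement of Proposition~\ref{prop:chamber} places us in that setting.

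Given properness, each set $\{|h|\leq n\}$ is compact and these sets exhaust $\calM_\delta$, so they form a cofinal compact exhaustion. For one end, the complement of $\{|h|\le R^3\}$ is biholomorphic to $(\{|\lambda|>R\}\times\Pic^0(Y))/\mu_3$. This is connected, being the image of a product of an annulus-to-infinity with a torus. Since every compact set lies in some $\{|h|\le n\}$, the space has exactly one end.

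\emph{Main obstacle.} I expect properness to be the step needing the most care. The spectral parametrization alone only describes the regular locus, which does not by itself control the central fiber's compactness. I would lean on the compactification identification rather than reprove it.
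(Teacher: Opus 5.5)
Your proposal is correct and follows the paper's proof. The paper also obtains the quotient description from Propositions~\ref{prop:spectral} and~\ref{prop:spectral-quotient}, freeness from $\lambda\neq0$, and the compact exhaustion and single end from properness of $h$ together with connectedness of $(\{|\lambda|>R\}\times\Pic^0(Y))/\mu_3$.

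The only difference is the source of properness: the paper cites \cite[Theorem~2.10]{Gro12}, which is your stated alternative, rather than Theorem~\ref{thm:elliptic-compactification}. Your primary route also works, since $S$ is the complement of the full fiber over $\infty$ and the fibration coordinate is $t=\lambda^3$. Your explicit check that $h^{-1}(\C^\times)=\calM_\delta^{\rm reg}$ fills in a step the paper leaves implicit.
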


\begin{proof}
Propositions~\ref{prop:spectral} and~\ref{prop:spectral-quotient} give
the quotient description.  The action is free because $\lambda\ne0$.
Properness of the Hitchin map \cite[Theorem~2.10]{Gro12} makes its
closed sublevel sets compact, and every compact subset has bounded
image.  Their complements have the displayed connected quotient
parametrization.
\end{proof}

The proposition above identifies the full end of the moduli space.  On its cubic cover
the elliptic curve and its integral homology are fixed, so the angular
coordinates on $\Pic^0(Y)$ can be chosen independently of $\lambda$.  The
finite quotient records their monodromy over the $t$-plane.

We choose a symplectic basis $A,B$ on $Y$ as follows.  On $(0,1)$ let
$\gamma_k$ be the lift from $0$ to $1$ on the sheet
$\zeta_k(x)=\omega^k[x(1-x)]^{1/3}$, and set
$A=\gamma_0-\gamma_1$ and $B=\gamma_1-\gamma_2$.
The graph $\pi^{-1}([0,1])$ has two vertices, three edges, and a disk
as its complement in $Y$, so $A,B$ form an integral homology basis.
With the complex orientation, endpoint perturbations give
$A\cdot B=1$.  Since $\sigma\gamma_k=\gamma_{k+1}$,
the induced action on homology is
$$
 \sigma_*A=B,\qquad \sigma_*B=-A-B.
$$
On a parameter chart in $\Pic^0(Y)$, choose smooth trivializations
of the degree-zero lines $P_q$ and write their unitary flat connections
as $d+\ii\chi_q$, with $\chi_q$ real harmonic.  Changes of gauge
shift its cohomology class by $2\pi H^1(Y;\Z)$.  This identifies the
underlying real torus with $H^1(Y;\R)/2\pi H^1(Y;\Z)$.  We
use the period-$2\pi$ angles
$$
 \theta_A=\int_A\chi_q,\qquad
 \theta_B=\int_B\chi_q\pmod{2\pi}.
$$
The special coordinate $a$ and the flat coordinates are
\begin{equation}\label{eq:flat-end-coordinates}
 a=-\frac{3e^{-\ii\pi/6}\Gamma(1/3)^3}{2\pi}\lambda,
 \qquad
 u=2\ii\sqrt{\Im\omega}\,a,
 \qquad
 v=\frac{\theta_B-\omega\theta_A}{\sqrt{\Im\omega}}.
\end{equation}
The class $v$ is understood modulo the lattice in the next statement.

\begin{prop}
\label{prop:flat-coordinates}
On the cubic cover, the coordinates \eqref{eq:flat-end-coordinates}
give
\begin{equation}\label{eq:flat-model-identities}
 \begin{aligned}
 \widetilde\Psi_\delta^*g_{{\rm sf},\delta}
 &=g_0=|du|^2+|dv|^2,\\
 \widetilde\Psi_\delta^*\Omega_\delta&=du\wedge dv,\qquad
 \widetilde\Psi_\delta^*\Omega_\delta^{\rm Serre}
 =\frac{du\wedge dv}{4\pi\ii}.
 \end{aligned}
\end{equation}
The fiber lattice is
$\Lambda=\frac{2\sqrt2\pi}{3^{1/4}}(\Z+\omega\Z)$ and the deck
action is $(u,v)\mapsto(\omega u,\omega^{-1}v)$.  Thus
$\Psi_\delta^*g_{{\rm sf},\delta}=g_{\mathcal C}$ for the flat
ALG model with parameters
$$
 (\beta,\tau,L)=
 \left(\frac13,\omega,\frac{2\sqrt2\pi}{3^{1/4}}\right).
$$
\end{prop}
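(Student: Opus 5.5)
The plan is to compute everything on the cubic cover $\C^\times_\lambda\times\Pic^0(Y)$, using the spectral splitting. Step one is the horizontal part. The semi-flat Hermitian form gives $\|\nu\|_Y^2=2\ii\int_Y\nu\wedge\bar\nu$ for $\nu=\dot\lambda\alpha$. This equals $|\dot\lambda|^2\cdot2\ii\int_Y\alpha\wedge\bar\alpha$. I would evaluate $2\ii\int_Y\alpha\wedge\bar\alpha$ by the Riemann bilinear relation, $\ii\int_Y\alpha\wedge\bar\alpha=\ii(\overline{\int_A\alpha}\int_B\alpha-\int_A\alpha\overline{\int_B\alpha})$, or with the opposite sign depending on orientation, which I would fix using $A\cdot B=1$.

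The periods come from the sheet description. Since $\alpha=\zeta\,dz/(z(z-1))$, on sheet $k$ along $(0,1)$ we have $\int_{\gamma_k}\alpha=-\omega^k\int_0^1x^{-2/3}(1-x)^{-2/3}dx=-\omega^kB(1/3,1/3)$. This gives $\int_A\alpha=-(1-\omega)B(1/3,1/3)$ and $\int_B\alpha=\omega\int_A\alpha$, so the period ratio is $\omega$. Using $B(1/3,1/3)=\Gamma(1/3)^2/\Gamma(2/3)=\sqrt3\,\Gamma(1/3)^3/(2\pi)$, one gets $\int_A\alpha=-(1-\omega)\sqrt3\Gamma(1/3)^3/(2\pi)=-3e^{-\ii\pi/6}\Gamma(1/3)^3/(2\pi)$. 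Hence $a=\lambda\int_A\alpha$ is the $A$-period of $\lambda\alpha$, and $a_D=\omega a$.

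The special Kähler metric is $\Im(d\bar a\,da_D)=\Im\omega\,|da|^2$, up to the factor $2$ coming from $2\ii$. Checking the constant against $\|\nu\|_Y^2$ should give exactly $|du|^2$ with $u=2\ii\sqrt{\Im\omega}\,a$. The phase $\ii$ in $u$ is then fixed by the symplectic form computation.

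Step two is the vertical part. A flat unitary connection $d+\ii\chi$ deforms the $\bar\partial$-operator by $\tau=\tfrac{\ii}{2}\dot\chi^{0,1}$, up to the sign convention of $\bar\partial_{L}$. With $\dot\chi$ real harmonic, $\tau=\tfrac{\ii}{2}\dot\chi^{0,1}$, and $-2\ii\int\tau\wedge\bar\tau$ becomes a multiple of $\int_Y\dot\chi\wedge\star\dot\chi$. The flat metric on $H^1(Y;\R)$ with period angles $(\theta_A,\theta_B)$ is the inverse of the period matrix form. For period ratio $\omega$ this is $|d\theta_B-\omega\,d\theta_A|^2/\Im\omega$, up to the same constant, which matches $|dv|^2$.

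Orthogonality of the two distributions was stated in the preceding subsection, so $g_{\rm sf}=|du|^2+|dv|^2$. The lattice is generated by $2\pi$ shifts of $\theta_A$ and $\theta_B$, giving $v$-shifts $2\pi(1/\sqrt{\Im\omega})(\Z+\omega\Z)$ up to sign. Since $\Im\omega=\sqrt3/2$, this is $\tfrac{2\sqrt2\pi}{3^{1/4}}(\Z+\omega\Z)$. The deck action follows from Proposition~\ref{prop:spectral-quotient}: $\lambda\mapsto\omega\lambda$ gives $u\mapsto\omega u$. Pullback by $\sigma$ acts on homology by $\sigma_*A=B$ and $\sigma_*B=-A-B$, so $\theta_A\mapsto\theta_B$ and $\theta_B\mapsto-\theta_A-\theta_B$. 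Then $\theta_B-\omega\theta_A\mapsto-\theta_A-\theta_B-\omega\theta_B=\omega^2(\theta_B-\omega\theta_A)$, using $1+\omega=-\omega^2$. This is $\omega^{-1}v$, as claimed.

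Step three is the symplectic forms. I would evaluate \eqref{eq:holomorphic-symplectic-pairing} on $\mathbf F_\nu$ and $\mathbf F_\tau$. It reduces to $2\int_Y\nu\wedge\tau$, together with the direct-image factor $1/3$ times the three sheets. Expressing $\tau$ through $d\theta$ and $\nu$ through $d\lambda$, Riemann bilinear relations turn this into $\Im$-weighted combinations that should collapse to $du\wedge dv$. Finally, $\Omega^{\rm Serre}$ differs from $\Omega_\delta$ by the universal ratio $1/(4\pi\ii)$ between the trace/Serre pairing and $2\int\tr(\cdot)$ with the unitary-to-holomorphic conversion $\ii/2$ factors. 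I expect this to be the main obstacle: tracking signs, factors of $2$, $\ii$ and $2\pi$ consistently among $\tau$ versus $\dot\chi$, the bilinear relation orientation, and the normalization of $u$ and $v$. I would settle it by checking a single pair of tangent vectors, $\partial_\lambda$ and $\partial_{\theta_A}$, explicitly. Identifying the result with $g_{\mathcal C}$ of \cite{CV24} is then a matter of matching their definitions of $\beta,\tau,L$.
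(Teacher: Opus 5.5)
Your route is the paper's: beta-integral periods with $\int_B\alpha=\omega\int_A\alpha$, period equations for a real harmonic $\dot\chi$, $2\pi$-shifts of $(\theta_A,\theta_B)$ for the lattice, and $\sigma_*A=B$, $\sigma_*B=-A-B$ for the deck action. Your period, lattice and deck-action computations are correct.

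The gap is that this proposition is purely a normalization statement, and you never pin down the constants that decide it (``up to the same constant'', ``should collapse''). Worse, the one conversion you commit to is wrong. The $(0,1)$-part of $d+\ii\chi$ is $\bar\partial+\ii\chi^{0,1}$, so the vertical class is $\tau=(\ii\dot\chi)^{0,1}=\ii\dot\chi^{0,1}$, not $\tfrac{\ii}{2}\dot\chi^{0,1}$. With your factor you would get $\|\tau\|_Y^2=\tfrac14|dv|^2$ and $\Omega_\delta=\tfrac12\,du\wedge dv$, so $g_{{\rm sf},\delta}=|du|^2+\tfrac14|dv|^2$. The unit-speed fiber coordinate would then be $v/2$ and $L$ would be halved, contradicting the statement. (The $v$-lattice is fixed by the definition of $v$; the factor decides whether $v$ is an isometric coordinate.) Two smaller slips: the horizontal constant is $4$, not $2$, since $\|\dot\lambda\alpha\|_Y^2=4\Im(\overline{\dot a}\,\dot a_D)$. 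Also, your bilinear relation as written makes $\ii\int_Y\alpha\wedge\bar\alpha$ negative.

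The computation that closes the gap is short. Put $\alpha_0=\alpha/\int_A\alpha$, so $\int_A\alpha_0=1$, $\int_B\alpha_0=\omega$, $\ii\int_Y\alpha_0\wedge\overline{\alpha_0}=2\Im\omega$, and $\lambda\alpha=a\alpha_0$. Write $\dot\chi=c\alpha_0+\bar c\,\overline{\alpha_0}$. Its periods are $p_A=c+\bar c$ and $p_B=c\omega+\bar c\bar\omega$, so $\tau=\ii\bar c\,\overline{\alpha_0}=-\frac{p_B-\omega p_A}{2\Im\omega}\,\overline{\alpha_0}$. Then:
\begin{itemize}
\item $\|\dot a\alpha_0\|_Y^2=4\Im\omega\,|\dot a|^2$;
\item $\|\tau\|_Y^2=4\Im\omega\,|c|^2=|p_B-\omega p_A|^2/\Im\omega$;
\item $\int_Y\alpha_0\wedge\tau=\ii(p_B-\omega p_A)$, hence $\Omega_\delta=2\ii\,da\wedge(d\theta_B-\omega\,d\theta_A)=du\wedge dv$.
\end{itemize}
This last identity is what fixes the factor $2\ii\sqrt{\Im\omega}$ in $u$. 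The relation $\Omega_\delta=4\pi\ii\,\Omega_\delta^{\rm Serre}$ is just the ratio of the prefactors $2$ and $\frac1{2\pi\ii}$ in front of $\int_X\tr(\varphi_1\wedge a_2-\varphi_2\wedge a_1)$; no further $\frac{\ii}{2}$ enters.

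Finally, you evaluate $\Omega_\delta$ on $\mathbf F_\tau$, which is not itself a strongly parabolic deformation. Use $\mathbf F_{\tau'}$ instead, and note that $\int_Y\nu\wedge(\tau-\tau')=0$ by Stokes, since $\nu$ is holomorphic and $(\pi^*\chi_p)\beta_p$ is smooth on $Y$. The paper handles this point through the vanishing boundary terms of the vertical representatives.
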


\begin{proof}
On the $k$-th sheet,
$\alpha=-\omega^k x^{-2/3}(1-x)^{-2/3}\,dx$.  Both endpoint
singularities are integrable.  Euler's beta integral and reflection
formula give
$$
 \begin{aligned}
 \int_0^1x^{-2/3}(1-x)^{-2/3}\,dx
 &=\frac{\Gamma(1/3)^2}{\Gamma(2/3)}
 =\frac{\sqrt3\,\Gamma(1/3)^3}{2\pi},\\
 \Gamma(1/3)\Gamma(2/3)&=\frac{2\pi}{\sqrt3},
 \qquad 1-\omega=\sqrt3e^{-\ii\pi/6}.
 \end{aligned}
$$
Subtracting the sheet integrals along the chosen cycles yields
\begin{equation}\label{eq:spectral-periods}
 \begin{aligned}
 \int_A\alpha
 &=-(1-\omega)\frac{\Gamma(1/3)^2}{\Gamma(2/3)}
 =-\frac{3e^{-\ii\pi/6}\Gamma(1/3)^3}{2\pi},\\
 \int_B\alpha&=\omega\int_A\alpha.
 \end{aligned}
\end{equation}
Consequently, $a=\int_Af_\lambda^*\theta$, and
$$
 \alpha_0=-\frac{2\pi e^{\ii\pi/6}}{3\Gamma(1/3)^3}\alpha,
 \qquad
 \int_A\alpha_0=1,\quad \int_B\alpha_0=\omega,
 \qquad
 \ii\int_Y\alpha_0\wedge\overline{\alpha_0}=2\Im\omega.
$$
For a real harmonic variation
$\dot\chi=c\alpha_0+\bar c\,\overline{\alpha_0}$ with periods
$p_A,p_B$, the period equations are
$$
 p_A=c+\bar c,\qquad
 p_B=c\omega+\bar c\,\bar\omega.
$$
Solving them gives
\begin{equation}\label{eq:vertical-dolbeault-periods}
 c=\frac{p_B-\bar\omega p_A}{\omega-\bar\omega},
 \qquad
 \tau=(\ii\dot\chi)^{0,1}
 =-\frac{p_B-\omega p_A}{2\Im\omega}\overline{\alpha_0}.
\end{equation}
Large unitary gauge transformations shift $(\theta_A,\theta_B)$ by
$2\pi\Z^2$.  Since $\Pic^0(\Pp^1)=0$, the norm map on $\Pic^0(Y)$ is
trivial.  The corresponding shifts of $v$ are
$$
 \Delta v=\frac{2\pi}{\sqrt{\Im\omega}}(n-\omega m),
 \qquad (m,n)\in\Z^2.
$$
Thus
\begin{equation}\label{eq:flat-torus-lattice}
 \Lambda=\frac{2\pi}{\sqrt{\Im\omega}}(\Z+\omega\Z)
 =\frac{2\sqrt2\pi}{3^{1/4}}(\Z+\omega\Z).
\end{equation}
Formula~\eqref{eq:vertical-dolbeault-periods} is complex linear in
$p_B-\omega p_A$, so $v$ is a holomorphic coordinate on the universal
cover of $\Pic^0(Y)$ and $dv$ descends as its invariant holomorphic one-form.

The spectral pairings of Proposition~\ref{prop:spectral-pairings} and
the Riemann bilinear identity give
\begin{equation}\label{eq:semiflat-period-expression}
 \begin{aligned}
 \widetilde\Psi_\delta^*g_{{\rm sf},\delta}
 &=4\Im\omega\,|da|^2
   +\frac{|d\theta_B-\omega d\theta_A|^2}{\Im\omega}\\
 &=|du|^2+|dv|^2.
 \end{aligned}
\end{equation}
For the horizontal term, the variation is
$d\lambda\,\alpha=da\,\alpha_0$ and its squared norm is
$2\ii|da|^2\int_Y\alpha_0\wedge\overline{\alpha_0}$.
For the vertical term, $*\alpha_0=-\ii\alpha_0$ and
$*\overline{\alpha_0}=\ii\overline{\alpha_0}$, so
$$
 \int_Y\dot\chi\wedge*\dot\chi
 =2\ii|c|^2\int_Y\alpha_0\wedge\overline{\alpha_0}
 =4\Im\omega\,|c|^2
 =\frac{|p_B-\omega p_A|^2}{\Im\omega}.
$$
The horizontal and vertical spaces are orthogonal.  These formulas use
the unnormalized matrix trace: integration contributes $1/3$ and the
three spectral summands contribute $3$, as in
Proposition~\ref{prop:compactification}.

Finally, with deformation pairs ordered as $(a,\varphi)$,
\eqref{eq:holomorphic-symplectic-pairing} gives the normalization
of the holomorphic symplectic form induced by trace and Serre duality:
\begin{equation}\label{eq:serre-normalization}
 \begin{aligned}
 \Omega_\delta^{\rm Serre}(\mathbf x_1,\mathbf x_2)
 &=\frac1{2\pi\ii}\int_X\tr(
 \varphi_1\wedge a_2-\varphi_2\wedge a_1),\\
 \Omega_\delta&=4\pi\ii\,\Omega_\delta^{\rm Serre}.
 \end{aligned}
\end{equation}
For a horizontal tangent vector with special-coordinate variation
$\dot a$ and a vertical tangent vector $\dot q$ represented by $\tau$,
\eqref{eq:vertical-dolbeault-periods} gives
$$
 \int_Y(\dot a\,\alpha_0)\wedge\tau
 =\ii\dot a\,(p_B-\omega p_A),
 \qquad p_A=d\theta_A(\dot q),\quad p_B=d\theta_B(\dot q).
$$
The Abel--Jacobi map $\operatorname{AJ}(s)=\mathcal O_Y(s-r_0)$
and Serre duality give
\begin{equation}\label{eq:abel-jacobi-normalization}
 \begin{aligned}
 \frac1{2\pi\ii}\int_Y\alpha_0\wedge\tau
 &=\frac{p_B-\omega p_A}{2\pi},\qquad
 \tau=(\ii\dot\chi)^{0,1},\\
 \operatorname{AJ}^*\left(
 \frac{d\theta_B-\omega d\theta_A}{2\pi}\right)&=\alpha_0;
 \end{aligned}
\end{equation}
compare \cite[Section~3.1, equation~(3.1)]{Jia26}.
In these coordinates, the canonical holomorphic symplectic form
on $T^*\Pic^0(Y)$ is
$\frac1{2\pi}\,da\wedge(d\theta_B-\omega d\theta_A)$.
The normalized integral is one third of the sum of the three equal
diagonal contributions, as in \eqref{eq:symplectic-orbit-pairing}.
The constant diagonal terms pair trivially with
strictly flag-lowering residues, and the remaining boundary terms
vanish by Proposition~\ref{lem:vertical-representative}.  Therefore
$$
 \widetilde\Psi_\delta^*\Omega_\delta
 =2\ii\,da\wedge(d\theta_B-\omega d\theta_A)=du\wedge dv.
$$
To check the action on the angular coordinates, the map
$q\mapsto\omega^{-1}q$ is induced by $\sigma^*$ on the spectral
line.  The homology formulas therefore give
$$
 (\theta_A,\theta_B)\longmapsto
 (\theta_B,-\theta_A-\theta_B),
 \qquad
 \theta_B-\omega\theta_A\longmapsto
 \omega^{-1}(\theta_B-\omega\theta_A).
$$
Since $a$ is a constant multiple of $\lambda$, the full action is
$(u,v)\mapsto(\omega u,\omega^{-1}v)$.  It preserves both the metric
and $du\wedge dv$, and agrees with \cite[Definition~2.5]{CV24}.
\end{proof}

In particular, the flat model is obtained from
$\{|u|>R_0\}\times(\C/\Lambda)$ with the product Euclidean metric
by this order-three action.  In polar coordinates $u=Re^{\ii\vartheta}$
it has local expression
$$
 g_{\mathcal C}=dR^2+R^2d\vartheta^2+|dv|^2,
 \qquad
 (\vartheta,v)\sim(\vartheta+2\pi/3,\omega^{-1}v).
$$
The base cone angle is $2\pi/3$, hence $\beta=1/3$, while the fixed
elliptic modulus is $\tau=\omega$.  The map induced by
$t=\lambda^3$ is the Hitchin fibration on the four-dimensional
quotient.  It identifies the base quotient
$\{|\lambda|>R\}/\mu_3$ with $\{|t|>R^3\}$, with $R$ related to
$R_0$ by the scaling \eqref{eq:flat-end-coordinates}.

The elementary lattice invariants are
\begin{equation}\label{eq:flat-torus-geometry}
 \Area(\C/\Lambda)=L^2\Im\omega=4\pi^2,
 \qquad
 \inj(\C/\Lambda)=\frac L2=\frac{\sqrt2\pi}{3^{1/4}},
\end{equation}
since $|m+n\omega|^2=m^2-mn+n^2\geq1$ for
$(m,n)\in\Z^2\setminus\{(0,0)\}$.  The radial coordinate is
\begin{equation}\label{eq:alg-radial-scale}
 R_{\rm ALG}=|u|
 =\frac{3^{5/4}\Gamma(1/3)^3}{\sqrt2\pi}|\lambda|
 =\frac{3^{5/4}\Gamma(1/3)^3}{\sqrt2\pi}|t|^{1/3}.
\end{equation}
All these coordinates, tensors, and constants are independent of
$\delta$.

\subsection{Completeness and decay of derivatives}

Write $g_\delta=\widetilde\Psi_\delta^*g_{L^2,\delta}$ on the cover.
The map $\widetilde\Psi_\delta$ is holomorphic, so $g_0$ and $g_\delta$ are
K\"ahler for the same Dolbeault complex structure.
The differences between the largest and smallest normalized weights
are $2/3$ and $2/3+\delta<1$;
the adapted harmonic metrics and parabolic hyperk\"ahler quotient are
those of \cite{Simpson1990,Konno1993}.  The finite scalar stabilizer
has trivial action on the deformation space.  Thus $g_\delta$ is a
smooth hyperk\"ahler metric and, in particular, Ricci-flat K\"ahler.

\begin{prop}
\label{prop:completeness-volume}
The metric $g_{L^2,\delta}$ is complete and has quadratic volume
growth: for some $p\in\calM_\delta$ and positive $c,C,\rho_0$,
$$
 c\rho^2\leq\Vol_{g_{L^2,\delta}}B(p,\rho)\leq C\rho^2
 \qquad(\rho\geq\rho_0).
$$
\end{prop}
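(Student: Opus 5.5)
The plan is to reduce both completeness and volume growth to the flat model on the end, using Theorem~\ref{thm:metric-comparison} together with Propositions~\ref{prop:flat-end} and~\ref{prop:flat-coordinates}. Fix $\delta$ and a compact interval $I\ni\delta$. By Proposition~\ref{prop:flat-coordinates}, $\Psi_\delta^*g_{{\rm sf},\delta}=g_{\mathcal C}$ on $(\{|\lambda|>R\}\times\Pic^0(Y))/\mu_3$. Theorem~\ref{thm:metric-comparison} gives $|g_{L^2,\delta}-g_{\mathcal C}|_{g_{\mathcal C}}\leq C_Ie^{-c_IT}$ for $T\geq T_I$. Since $T$ is a constant multiple of $R_{\rm ALG}$ by \eqref{eq:alg-radial-scale}, after enlarging $R$ we get the uniform bilipschitz bound
$$
 \tfrac12 g_{\mathcal C}\leq\Psi_\delta^*g_{L^2,\delta}\leq 2g_{\mathcal C}
 \qquad\text{on }\{R_{\rm ALG}>R_1\}.
$$
The quotient is free (Proposition~\ref{prop:flat-end}), so these tensor comparisons descend from the cubic cover.

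\emph{Completeness.} Let $K=h^{-1}\{|t|\leq R_1'^3\}$, which is compact by Proposition~\ref{prop:flat-end}. The point is to show that every $g_{L^2}$-Cauchy sequence, or every divergent path, has infinite length. Take a curve $c$ that leaves every compact set. Eventually it stays in the end region, or it re-enters $K$ only finitely often on a tail. On the end, $R_{\rm ALG}\circ c\to\infty$. Since $|dR_{\rm ALG}|_{g_{\mathcal C}}=1$, the bilipschitz bound gives $|dR_{\rm ALG}|_{g_{L^2}}\leq\sqrt2$, so the $g_{L^2}$-length of $c$ is at least $(R_{\rm ALG}(c(s))-R_1)/\sqrt2\to\infty$. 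Hence closed bounded sets are compact and Hopf--Rinow applies. I would phrase this as properness of the distance function: $d(p,x)\geq (R_{\rm ALG}(x)-R_1)/\sqrt2-C$ outside $K$. Any path from $p$ to $x$ must cross the level $\{R_{\rm ALG}=R_1\}$ and then climb radially, so bounded sets have bounded $R_{\rm ALG}$ and lie in compact sublevel sets of $|h|$.

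\emph{Volume growth.} I would also prove the reverse two-sided estimate $d(p,x)\leq\sqrt2\,R_{\rm ALG}(x)+C$. To do this, join $x$ within the end by a $g_{\mathcal C}$-path of length $\leq R_{\rm ALG}(x)+C'$ to the level $R_1$: move radially in $u$, then move in the fiber, which has bounded diameter $\operatorname{diam}(\mathbb C/\Lambda)$. The two estimates show that $B(p,\rho)$ lies between $\{R_{\rm ALG}\leq(\rho-C)/\sqrt2\}$ and $\{R_{\rm ALG}\leq\sqrt2\rho+C\}$, up to the compact set $K$. The $g_{\mathcal C}$-volume of $\{R_1<R_{\rm ALG}<s\}$ is
$$
 \frac13\pi(s^2-R_1^2)\Area(\C/\Lambda)=\frac{4\pi^3}{3}(s^2-R_1^2),
$$
using \eqref{eq:flat-torus-geometry} and the free $\mu_3$ quotient. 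The $g_{L^2}$-volume form is within a factor $4$ of the flat one. Adding the finite volume of $K$ gives $c\rho^2\leq\Vol B(p,\rho)\leq C\rho^2$ for $\rho\geq\rho_0$.

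\emph{Main obstacle.} The main obstacle is the lower distance bound near the inner boundary. A minimizing path may wander through the compact core $K$, where we have no explicit control. This only costs an additive constant, provided we pick $p\in K$ and use that $K\cup\{R_{\rm ALG}\leq R_1\}$ has finite diameter, since it is compact and connected. Connectedness holds because $\calM_\delta$ is connected, being the smooth surface $S$ of Proposition~\ref{prop:compactification}. A secondary check is that the comparison of Theorem~\ref{thm:metric-comparison} is stated on the cover in terms of the semi-flat norm. This is exactly the $g_{\mathcal C}$-norm by Proposition~\ref{prop:flat-coordinates}, so no loss arises from the quotient.
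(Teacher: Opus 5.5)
Your proposal is correct and follows the paper's proof. Both arguments take a bilipschitz comparison with $g_{\mathcal C}$ on the end from Theorem~\ref{thm:metric-comparison}, get completeness and the last-crossing lower distance bound from the length estimate $L\geq 2^{-1/2}|\Delta R_{\rm ALG}|$, get the upper distance bound from a radial path, and compare volume forms against the model volume $\frac{4\pi^3}{3}(s^2-R_1^2)$ (the paper uses the constant $3/2$ where you use $2$, and your finite-diameter remark is only needed for the upper distance bound, since the last-crossing argument already handles paths through the core).
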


\begin{proof}
Theorem~\ref{thm:metric-comparison} and
\eqref{eq:alg-radial-scale} give
$\frac12g_0\leq g_\delta\leq\frac32g_0$ outside a compact set.
This comparison descends to the quotient end via $\Psi_\delta$.
Writing $R=R_{\rm ALG}$ there, every curve satisfies
$L_{g_{L^2,\delta}}(\gamma)\geq2^{-1/2}|\Delta R|$.
Proposition~\ref{prop:flat-end} therefore implies that every divergent
curve has infinite length, which proves completeness.
Choose $p$ on the compact side of a level $R=R_0$.  For a point
$x$ in the end, radial paths and the last crossing of this level give
$$
 2^{-1/2}(R(x)-R_0)
 \leq d_{g_{L^2,\delta}}(p,x)
 \leq C+(3/2)^{1/2}(R(x)-R_0).
$$
The bounded constant in the upper bound includes paths from $p$ to
points of the compact level.  The lower bound applies to the final
part of any path after its last crossing of that level.  For the upper
bound, first reach the corresponding point at $R_0$, then use a radial
path with fixed angular and torus coordinates on the cover.
The flat torus area and quotient angle give
\begin{equation}\label{eq:model-volume}
 \Vol_{g_{\mathcal C}}\{R_0<R_{\rm ALG}<S\}
 =\frac{4\pi^3}{3}(S^2-R_0^2).
\end{equation}
On the same quotient end,
$$
 \frac14\,dV_{g_{\mathcal C}}
 \leq dV_{\Psi_\delta^*g_{L^2,\delta}}
 \leq\frac94\,dV_{g_{\mathcal C}}.
$$
The distance bounds compare geodesic balls with radial sublevel sets.
Their quadratic volumes, together with the finite volume of the compact part,
prove the claim.
\end{proof}

To apply local Ricci-flat estimates, fix
$0<r<\inj(\C/\Lambda)=\sqrt2\pi/3^{1/4}$.  Outside a fixed compact set,
every ball of radius $r$ for the flat quotient lifts holomorphically
and isometrically to a Euclidean ball.  Indeed, nonzero lattice
translations have length at least $2\inj(\C/\Lambda)>2r$, while a deck
transformation with rotation $\omega^k$, $k=1,2$, displaces a point by
at least $|1-\omega^k||u|=\sqrt3|u|$.  More precisely, all such
transformations have the form
$$
 (u,v)\longmapsto(\omega^ku,\omega^{-k}v+\ell),
 \qquad \ell\in\Lambda.
$$
Choose a threshold $R_*>R_0+r$ with
$\sqrt3(R_*-r)>2r$.  Every point of a ball centered at $|u|>R_*$
then remains in the end and has displacement greater than $2r$
under each transformation with $k\ne0$.  Two points of that ball
cannot be identified, since their distance is less than $2r$.
This proves that the ball is disjoint from all its nontrivial
translates.  The radius and threshold are independent of $\delta$.
On these fixed holomorphic balls, the zeroth-order comparison and
local elliptic estimates give bounds for higher derivatives.

\begin{thm}
\label{thm:derivative-decay}
Let $I\Subset(0,1/3)$.  On the three-sheeted cyclic cover of an end
region chosen uniformly for $\delta\in I$, there is a constant
$c_I>0$ such that, for every integer $k\geq0$,
\begin{equation}\label{eq:all-derivative-decay}
 \sup_{\delta\in I}
 |(\nabla^{g_0})^k(g_\delta-g_0)|_{g_0}
 \leq C_{I,k}e^{-c_IR_{\rm ALG}}.
\end{equation}
The constants $C_{I,k}>0$ are independent of $\delta$.
The estimates are invariant under the lattice and $\mu_3$ actions
and descend to the standard ALG model.
\end{thm}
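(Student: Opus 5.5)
The plan is to upgrade the zeroth-order comparison of Theorem~\ref{thm:metric-comparison} to all derivatives by local Ricci-flat K\"ahler regularity on the fixed holomorphic balls constructed just above, and then to interpolate.

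\emph{Setup.} Fix $0<r<\inj(\C/\Lambda)$ and the threshold $R_*$ from the preceding paragraph. For a point with $|u|>R_*$, let $B$ be the Euclidean ball of radius $r$ in coordinates $(u,v)$, which embeds isometrically and holomorphically in the quotient end. On $B$ the flat metric is $g_0=|du|^2+|dv|^2$. Since $\widetilde\Psi_\delta$ is holomorphic, $g_\delta$ is a Ricci-flat K\"ahler metric on $B$ for the standard complex structure. By Theorem~\ref{thm:metric-comparison} and \eqref{eq:alg-radial-scale}, after enlarging $R_*$ uniformly in $\delta\in I$,
$$
 \|g_\delta-g_0\|_{C^0(B)}\leq \varepsilon(R)=C_Ie^{-c_I'R_{\rm ALG}},
$$
which is as small as desired.

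\emph{Local regularity.} I would apply \cite[Proposition~2.1]{HT20}. Its content is that a Ricci-flat K\"ahler metric on a unit-size Euclidean ball which is $C^0$-close to the flat metric satisfies interior $C^{k}$ bounds on the half ball that are linear in the $C^0$ deviation. Concretely, I expect
$$
 \|g_\delta-g_0\|_{C^k(B/2)}\leq C_k\|g_\delta-g_0\|_{C^0(B)}.
$$
If that proposition only gives a uniform $C^{k,\alpha}$ bound, I would instead write $g_\delta=g_0+i\partial\bar\partial\varphi$ on $B$, normalize $\varphi$, and linearize the complex Monge--Amp\`ere equation $\det(g_0+i\partial\bar\partial\varphi)=|f|^2\det g_0$. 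Ricci-flatness makes $\log(\det g_\delta/\det g_0)$ pluriharmonic, and $\Omega$-compatibility would even fix $f$. The difference then satisfies a uniformly elliptic equation whose coefficients are bounded in $C^{k,\alpha}$. Schauder estimates make the $C^{k}$ norm linear in the $C^0$ norm, or at least bounded by $C^0$-norm interpolated against the uniform $C^{k+1}$ bounds, giving $\|\cdot\|_{C^k}\leq C\varepsilon^{1-\theta}$. Either route keeps exponential decay with a slightly smaller rate. I regard this step as the main obstacle: one must make sure the constants depend only on $r$, $k$, and the smallness threshold, and not on $\delta$. Uniformity follows because the balls, the flat background, and the $C^0$ bound are all $\delta$-independent.

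\emph{Conclusion.} Covering the end by such half balls gives \eqref{eq:all-derivative-decay}, since $\nabla^{g_0}$ is the coordinate derivative and $R_{\rm ALG}$ varies by at most $r$ on each ball. Invariance under $\Lambda$ and $\mu_3$ holds because $g_0$ and the Hitchin metric descend, the latter by Proposition~\ref{prop:spectral-quotient}. The estimate is therefore invariant on the cover and descends to $g_{\mathcal C}$, and the K\"ahler forms are handled identically.
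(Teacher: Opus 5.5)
Your outline is correct, but the key analytic step differs from the paper's, and one of your fallbacks does not give the statement as written.

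\textbf{What the paper does.} As the paper uses it, \cite[Proposition~2.1]{HT20} gives only uniform bounds $\sup_{\delta\in I}\|g_\delta-g_0\|_{C^m(B_{3r/4})}\le C_{I,m}$. These depend on $m$, the radius and the ellipticity ratio. They are not linear in the $C^0$ deviation, so your first expectation is not what that result provides. The paper then interpolates with $j=k$ and $m=2k$: $\|D^kf\|_{L^\infty(B_{r/2})}\le C\|f\|_{L^\infty}^{1/2}\|f\|_{C^{2k}}^{1/2}+C\|f\|_{L^\infty}$. This yields the rate $a/2$ simultaneously for every $k$.

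\textbf{The flaw in your interpolation fallback.} Interpolating the $C^k$ norm between $C^0$ and $C^{k+1}$ gives the exponent $1/(k+1)$, hence a rate $c/(k+1)$ that tends to zero. The theorem asserts a single $c_I$ for all $k$. The order you interpolate against must grow proportionally to $k$, which is exactly the paper's choice of $m=2k$.

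\textbf{Your linearization route.} This is a genuine alternative and it is sound. With the uniform bounds controlling the coefficients, linear interior Schauder estimates give $\|g_\delta-g_0\|_{C^k(B_{r/2})}\le C_k\|g_\delta-g_0\|_{C^0(B_r)}$, so there is no loss of rate at all. Your remark that $\widetilde\Psi_\delta^*\Omega_\delta=du\wedge dv$ forces $\det g_\delta=\det g_0$ is also correct.

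However, ``normalize $\varphi$'' hides the real input. The equation $a^{i\bar j}\partial_i\partial_{\bar j}\varphi=0$, with $a=\int_0^1((1-s)g_0+sg_\delta)^{-1}ds$, only yields $\|\varphi\|_{C^{k+2,\alpha}}\le C\|\varphi\|_{C^0}$. So you need a local potential with $\|\varphi\|_{C^0}\le C\varepsilon$, i.e.\ a $\partial\bar\partial$-lemma on the ball with sup-norm bounds.

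It is simpler to avoid potentials. Since $g_0$ is constant and $g_\delta$ is Ricci-flat K\"ahler, $h=g_\delta-g_0$ satisfies $g_\delta^{i\bar j}\partial_i\partial_{\bar j}h=g_\delta^{-1}*g_\delta^{-1}*\partial h*\bar\partial h$. This system has scalar principal part and is linear in $\partial h$, with coefficients bounded in every $C^m$ by the uniform estimates. Interior Schauder estimates then give the linear bound directly.

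\textbf{Comparison.} The paper's route is a short black-box argument that halves the rate. Yours keeps the full rate at the cost of one extra linear elliptic step. Both still need the uniform higher-order bounds from \cite{HT20} to control the coefficients.
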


\begin{proof}
Fix $0<r<\sqrt2\pi/3^{1/4}$.  The flat metric, its lattice, and the
radius $r$ are independent of $\delta$.
The preceding construction gives the same lifted Euclidean
holomorphic balls throughout $I$.  Theorem~\ref{thm:metric-comparison}
and \eqref{eq:alg-radial-scale} give constants $C,a>0$ such that, for
every $x$ in the end,
$$
 \sup_{\delta\in I}|g_\delta-g_0|_{g_0}(x)
 \leq Ce^{-aR_{\rm ALG}(x)}.
$$
Since $R_{\rm ALG}=|u|$ is one-Lipschitz for $g_0$, this estimate
holds on each fixed ball after changing $C$.  Increasing the end radius
uniformly for $\delta\in I$ gives
$$
 \frac12g_0\leq g_\delta\leq\frac32g_0
 \qquad(\delta\in I).
$$

Lift a ball to $\C^2$, translate its center to zero, and rescale by the
fixed radius.  Every $g_\delta$ is Ricci-flat K\"ahler for the same
complex structure and has the same ellipticity ratio.  The local
estimate \cite[Section~2.1, Proposition~2.1]{HT20} consequently gives,
for every integer $m\geq1$,
$$
 \sup_{\delta\in I}
 \|g_\delta-g_0\|_{C^m(B_{3r/4})}\leq C_{I,m}.
$$
For a scalar component $f$, the interpolation inequality on these balls is
$$
 \|D^jf\|_{L^\infty(B_{r/2})}
 \leq
 C\|f\|_{L^\infty(B_{3r/4})}^{1-j/m}
 \|f\|_{C^m(B_{3r/4})}^{j/m}
 +C\|f\|_{L^\infty(B_{3r/4})}.
$$
For each integer $k\geq1$, take $j=k$ and $m=2k$.  The zeroth-order exponential
estimate and the uniform $C^{2k}$ bound yield
$$
 \sup_{\delta\in I}|D^k(g_\delta-g_0)|
 \leq C_{I,k}e^{-(a/2)R_{\rm ALG}}.
$$
The same rate also bounds the case $k=0$ after enlarging its constant.
Thus $c_I=a/2$ works for every $k$.  Since $g_0$ is flat,
coordinate derivatives are its covariant derivatives.
The constants in the local regularity estimate depend on the chosen
derivative order, ball radius, and ellipticity ratio.  These quantities
are fixed throughout $I$.

The lattice translations and
$(u,v)\mapsto(\omega u,\omega^{-1}v)$ preserve $g_0$,
$g_\delta$, and $R_{\rm ALG}$.  The estimates therefore descend
with unchanged constants.
\end{proof}

The same exponential estimate holds for the curvature and all its
covariant derivatives.  In the following formulas, $*$ denotes a
finite universal linear combination of tensor contractions.  In flat
coordinates,
$$
 \begin{aligned}
 \Rm(g_\delta)
 &=g_\delta^{-1}*D^2g_\delta
 +g_\delta^{-1}*g_\delta^{-1}*Dg_\delta*Dg_\delta,\\
 \nabla^{g_\delta}-\nabla^{g_0}
 &=g_\delta^{-1}*Dg_\delta.
 \end{aligned}
$$
Uniform ellipticity and Theorem~\ref{thm:derivative-decay} therefore give
$$
 \sup_{\delta\in I}\bigl(
 |(\nabla^{g_0})^\ell\Rm(g_\delta)|_{g_0}
 +|(\nabla^{g_\delta})^\ell\Rm(g_\delta)|_{g_\delta}\bigr)
 \leq C_{I,\ell}e^{-c_IR_{\rm ALG}}
 \qquad(\ell\in\Z_{\geq0}).
$$
Since $Dg_0=0$, every differentiated term contains a derivative of
positive order of $g_\delta-g_0$, so the rate $c_I$ is unchanged.

\begin{proof}[Proof of Theorem~\ref{thm:main}]
Propositions~\ref{prop:flat-end} and~\ref{prop:flat-coordinates} give
the end identification, the semi-flat metric and holomorphic
symplectic form in the normalized coordinates, and the stated
ALG parameters.
Proposition~\ref{prop:completeness-volume} gives completeness and
quadratic volume growth.  Theorem~\ref{thm:derivative-decay} gives the
claimed decay, uniformly on compact subintervals of the weight
chamber.  Since both metrics are K\"ahler for the Dolbeault complex
structure, the same decay holds for their first K\"ahler forms; the equality of their holomorphic symplectic
forms identifies the other two K\"ahler forms exactly.

The quotient is the standard model
$\mathcal C_{1/3,\omega,2\sqrt2\pi/3^{1/4}}$ of
\cite[Definitions~1.1 and~2.5, Remark~2.7 and Table~2.1]{CV24}.
For each $k$, exponential decay implies
$e^{-c_IR_{\rm ALG}}=O(R_{\rm ALG}^{-2-k})$.
Thus the metric and K\"ahler-form estimates satisfy the order-$2$
ALG conditions in the stated model, and completeness gives an ALG
gravitational instanton.  Its curvature is square-integrable:
the end volume density is bounded by a constant times
$R_{\rm ALG}\,dR_{\rm ALG}$ after integration over the compact
angular and torus variables, and
$$
 \int_{R_0}^{\infty}
 R_{\rm ALG}e^{-2c_IR_{\rm ALG}}\,dR_{\rm ALG}<\infty.
$$
Theorem~\ref{thm:elliptic-compactification} identifies the central
fiber with the affine $E_6$ configuration.  It contains rational
curves.  A complete flat K\"ahler surface has holomorphic universal
cover $\C^2$, which admits only constant holomorphic maps from
$\Pp^1$.  Thus the metric is nonflat.
\end{proof}

\end{document}